\newtheorem{theorem}{Theorem}[section]
\newtheorem{lemma}[theorem]{Lemma}
\newtheorem{coro}[theorem]{Corollary}
\newtheorem{prop}[theorem]{Proposition}
\theoremstyle{definition}
\newtheorem{defn}[theorem]{Definition}
\newtheorem{exam}[theorem]{Example}
\newcommand{\nc}{\newcommand}
\newcommand{\delete}[1]{}
\nc{\tred}[1]{\textcolor{red}{#1}}
\nc{\tblue}[1]{\textcolor{blue}{#1}} \nc{\tgreen}[1]{\textcolor{green}{#1}} \nc{\tpurple}[1]{\textcolor{purple}{#1}} \nc{\btred}[1]{\textcolor{red}{\bf #1}} \nc{\btblue}[1]{\textcolor{blue}{\bf #1}} \nc{\btgreen}[1]{\textcolor{green}{\bf #1}} \nc{\btpurple}[1]{\textcolor{purple}{\bf #1}}
\newcommand{\efootnote}[1]{}
\nc{\mlabel}[1]{\label{#1}}  
\nc{\mcite}[2][]{\cite[#1]{#2}}  
\nc{\mref}[1]{\ref{#1}}  
\nc{\mbibitem}[1]{\bibitem{#1}} 
\nc{\mlabel}[1]{\label{#1}  
{\hfill \hspace{1cm}{\bf{{\ }\hfill(#1)}}}}
\nc{\mcite}[1]{\cite{#1}}  
\nc{\mref}[1]{\ref{#1}{{\bf{{\ }(#1)}}}}  
\nc{\mbibitem}[1]{\bibitem[\bf #1]{#1}} 
\renewcommand\geq{\geqslant}
\renewcommand\leq{\leqslant}
\renewcommand\bar[1]{\overline{#1}}
\nc{\nz}{\varepsilon}
\nc{\Id}{\mathrm{Id}}
\nc{\map}[2]{{#2}^{#1}}
\nc{\gp}{B}
\nc{\Irr}{\mathrm{Irr}}
\nc{\vx}{\sigma} \nc{\vy}{\tau} \nc{\dvx}{\sigma^{(1)}} \nc{\dvy}{\tau^{(1)}} \nc{\done}{\vep} \nc{\mcitep}[1]{\mcite{#1}} \nc{\wt}{\mathrm{wt}} \nc{\bre}[1]{|#1|} \nc{\mapmonoid}{\frakM} \nc{\disjoint}{\frakM'}
\nc{\ncpoly}[1]{\langle #1\rangle}  
\nc{\mapm}[1]{\lfloor\!|{#1}|\!\rfloor}
\nc{\diff}[1]{{}^\NC\{ #1 \}} \nc{\disj}[1]{\{{#1}\}'} \nc{\mdisj}[1]{\frakM'(#1)} \nc{\brho}{\bar{\rho}} \nc{\om}{\bar{\frakm}} \nc{\frakn}{\mathfrak n} \nc{\ddeg}[1]{^{(#1)}} \nc{\opset}{X} \nc{\genset}{{Z}} \nc{\NC}{\mathrm{{NC}}} \nc{\leaf}{\mathrm{leaf}} \nc{\twig}{\mathrm{twig}} \nc{\fe}{\mathrm{fl}} \nc{\munderline}[1]{#1} \nc{\bo}{o} \nc{\dep}{\mathrm{depth}} \nc{\ofe}{\mathrm{ofl}} \nc{\dfe}{\mathrm{dfe}} \nc{\fex}{\mathrm{fex}} \nc{\dl}{\mathrm{dlex}} \nc{\db}{\mathrm{db}} \nc{\lex}{\mathrm{lex}} \nc{\clex}{\mathrm{clex}} \nc{\dgp}{\mathrm{dgp}} \nc{\dgx}{\mathrm{dgx}} \nc{\br}{\mathrm{br}} \nc{\obd}{\mathrm{odb}} \nc{\ob}{\mathrm{ob}}
\nc{\pie}{\mathrm{PIE}}
\nc{\rbo}{\mathrm{RBO}}
\nc{\supp}{\mathcal{S}}
\nc{\nul}{\mathcal{Z}}
\nc{\bin}[2]{ (_{\stackrel{\scs{#1}}{\scs{#2}}})}  
\nc{\binc}[2]{ \left (\!\! \begin{array}{c} \scs{#1}\\
    \scs{#2} \end{array}\!\! \right )}  
\nc{\bincc}[2]{  \left ( {\scs{#1} \atop
    \vspace{-1cm}\scs{#2}} \right )}  
\nc{\bs}{\bar{S}} \nc{\cosum}{\sqsubset} \nc{\la}{\longrightarrow} \nc{\rar}{\rightarrow} \nc{\dar}{\downarrow} \nc{\dprod}{**} \nc{\dap}[1]{\downarrow \rlap{$\scriptstyle{#1}$}} \nc{\md}[1]{\bar{#1}} \nc{\uap}[1]{\uparrow \rlap{$\scriptstyle{#1}$}} \nc{\defeq}{\stackrel{\rm def}{=}} \nc{\disp}[1]{\displaystyle{#1}} \nc{\dotcup}{\ \displaystyle{\bigcup^\bullet}\ } \nc{\gzeta}{\bar{\zeta}} \nc{\hcm}{\ \hat{,}\ } \nc{\hts}{\hat{\otimes}} \nc{\barot}{{\otimes}} \nc{\free}[1]{\bar{#1}} \nc{\uni}[1]{\tilde{#1}} \nc{\hcirc}{\hat{\circ}} \nc{\leng}{\ell} \nc{\lleft}{[} \nc{\lright}{]} \nc{\lc}{\lfloor} \nc{\rc}{\rfloor}
\nc{\lb}{[} 
\nc{\rb}{]} 
\nc{\curlyl}{\left \{ \begin{array}{c} {} \\ {} \end{array}
    \right.  \!\!\!\!\!\!\!}
\nc{\curlyr}{ \!\!\!\!\!\!\!
    \left. \begin{array}{c} {} \\ {} \end{array}
    \right \} }
\nc{\longmid}{\left | \begin{array}{c} {} \\ {} \end{array}
    \right. \!\!\!\!\!\!\!}
\nc{\onetree}{\bullet} \nc{\ora}[1]{\stackrel{#1}{\rar}}
\nc{\ola}[1]{\stackrel{#1}{\la}}
\nc{\ot}{\otimes} \nc{\mot}{{{\boxtimes\,}}} \nc{\otm}{\overline{\boxtimes}} \nc{\sprod}{\bullet} \nc{\scs}[1]{\scriptstyle{#1}} \nc{\mrm}[1]{{\rm #1}} \nc{\msum}{\sum\limits}
\nc{\margin}[1]{\marginpar{\rm #1}}   
\nc{\dirlim}{\displaystyle{\lim_{\longrightarrow}}\,} \nc{\invlim}{\displaystyle{\lim_{\longleftarrow}}\,} \nc{\mvp}{\vspace{0.3cm}} \nc{\tk}{^{(k)}} \nc{\tp}{^\prime} \nc{\ttp}{^{\prime\prime}} \nc{\svp}{\vspace{2cm}} \nc{\vp}{\vspace{8cm}} \nc{\proofbegin}{\noindent{\bf Proof: }}
\nc{\proofend}{$\blacksquare$ \vspace{0.3cm}}
\nc{\modg}[1]{\!<\!\!{#1}\!\!>}
\nc{\intg}[1]{F_C(#1)} \nc{\lmodg}{\!<\!\!} \nc{\rmodg}{\!\!>\!} \nc{\cpi}{\widehat{\Pi}}
\nc{\sha}{{\mbox{\cyr X}}}  
\nc{\shap}{{\mbox{\cyrs X}}} 
\nc{\shpr}{\diamond}    
\nc{\shp}{\ast} \nc{\shplus}{\shpr^+}
\nc{\shprc}{\shpr_c}    
\nc{\msh}{\ast} \nc{\zprod}{m_0} \nc{\oprod}{m_1} \nc{\vep}{\varepsilon} \nc{\labs}{\mid\!} \nc{\rabs}{\!\mid}
\nc{\astarrow}{\overset{\raisebox{-3pt}{$\ast$}}{\rightarrow}}
\nc{\EEsym}{\mathbb{E}sym}
\nc{\sym}{\mrm{Sym}}
\nc{\nsym}{\mrm{NSym}}
\nc{\qsym}{\mrm{QSym}}
\nc{\Ensym}{\mrm{ENSym}}
\nc{\Wcsym}{\mrm{WCSym}}
\nc{\Wcqsym}{\mrm{WCQSym}}
\nc{\syms}{symmetric functions\xspace}
\nc{\eqsym}{extended quasi-symmetric function\xspace}
\nc{\eqsyms}{extended quasi-symmetric functions\xspace}
\nc{\Eqsyms}{Extended Quasi-symmetric functions\xspace}
\nc{\Esyms}{Extended symmetric functions\xspace}
\nc{\sgqsym}{quasi-symmetric function with semigroup exponents\xspace}
\nc{\sgqsyms}{quasi-symmetric functions with semigroup exponents\xspace}
\nc{\Sgqsyms}{Quasi-symmetric functions with semigroup exponents\xspace}
\nc{\SGQSYM}{\mrm{SGQSYM}}
\nc{\emzv}{extended multiple zeta value}
\nc{\emzvs}{extended multiple zeta values}
\nc{\sgfps}{formal power series with semigroup exponent\xspace}
\nc{\nsymg}{\mathrm{NSym}_\gp}
\nc{\parr}{\rm Par}
\nc{\wpar}{\rm WPar}
\nc{\wcomp}{\large{\VDash}}
\nc{\Ker}{\ker}
\nc{\dth}{d} \nc{\mmbox}[1]{\mbox{\ #1\ }} \nc{\fp}{\mrm{FP}} \nc{\rchar}{\mrm{char}} \nc{\Fil}{\mrm{Fil}} \nc{\Mor}{Mor\xspace} \nc{\gmzvs}{gMZV\xspace} \nc{\gmzv}{gMZV\xspace} \nc{\mzv}{MZV\xspace} \nc{\mzvs}{MZVs\xspace} \nc{\Hom}{\mrm{Hom}} \nc{\id}{\mrm{id}} \nc{\im}{\mrm{im}} \nc{\incl}{\mrm{incl}}  \nc{\mchar}{\rm char}
\nc{\Alg}{\mathbf{Alg}} \nc{\Bax}{\mathbf{Bax}} \nc{\bff}{\mathbf f} \nc{\bfk}{{\bf k}} \nc{\bfone}{{\bf 1}} \nc{\bfx}{\mathbf x} \nc{\bfy}{\mathbf y}
\nc{\base}[1]{\bfone^{\otimes ({#1}+1)}} 
\nc{\Cat}{\mathbf{Cat}} \delete{}
\nc{\detail}{\marginpar{\bf More detail}
    \noindent{\bf Need more detail!}
    \svp}
\nc{\Int}{\mathbf{Int}} \nc{\Mon}{\mathbf{Mon}}
\nc{\rbtm}{{shuffle }} \nc{\rbto}{{Rota-Baxter }} \nc{\remarks}{\noindent{\bf Remarks: }} \nc{\Rings}{\mathbf{Rings}} \nc{\Sets}{\mathbf{Sets}}
\nc{\balpha}{\mathbf{\alpha}}
\nc{\BA}{{\mathbb A}} \nc{\CC}{{\mathbb C}} \nc{\DD}{{\mathbb D}} \nc{\EE}{{\mathbb E}} \nc{\FF}{{\mathbb F}} \nc{\GG}{{\mathbb G}} \nc{\HH}{{\mathbb H}} \nc{\LL}{{\mathbb L}} \nc{\NN}{{\mathbb N}} \nc{\KK}{{\mathbb K}} \nc{\PP}{{\mathbb P}} \nc{\QQ}{{\mathbb Q}} \nc{\RR}{{\mathbb R}} \nc{\TT}{{\mathbb T}} \nc{\VV}{{\mathbb V}} \nc{\ZZ}{{\mathbb Z}}
\nc{\cala}{{\mathcal A}} \nc{\calc}{{\mathcal C}} \nc{\cald}{{\mathcal D}} \nc{\cale}{{\mathcal E}} \nc{\calf}{{\mathcal F}} \nc{\calg}{{\mathcal G}} \nc{\calh}{{\mathcal H}} \nc{\cali}{{\mathcal I}} \nc{\call}{{\mathcal L}} \nc{\calm}{{\mathcal M}} \nc{\caln}{{\mathcal N}} \nc{\calo}{{\mathcal O}} \nc{\calp}{{\mathcal P}} \nc{\calr}{{\mathcal R}} \nc{\cals}{{\mathcal S}} \nc{\calt}{{\mathcal T}} \nc{\calw}{{\mathcal W}} \nc{\calk}{{\mathcal K}} \nc{\calx}{{\mathcal X}}
\nc{\calz}{{\mathcal Z}}
\nc{\WC}{\mathcal{WC}}
\nc{\fraka}{{\mathfrak a}} \nc{\frakA}{{\mathfrak A}} \nc{\frakb}{{\mathfrak b}} \nc{\frakB}{{\mathfrak B}}
\nc{\frakc}{{\mathfrak c}}  \nc{\frakD}{{\mathfrak D}}
\nc{\frakH}{{\mathfrak H}}
\nc{\frakh}{{\mathfrak h}} \nc{\frakM}{{\mathfrak M}}
\nc{\frakO}{{\mathfrak O}}
\nc{\frakE}{{\mathfrak E}}
\nc{\bfrakM}{\overline{\frakM}} \nc{\frakm}{{\mathfrak m}} \nc{\frakP}{{\mathfrak P}} \nc{\frakN}{{\mathfrak N}} \nc{\frakp}{{\mathfrak p}} \nc{\frakS}{{\mathfrak S}}
\nc{\frakk}{{\mathfrak k}}
\nc{\frakx}{{\mathfrak x}}
\nc{\frakl}{{\mathfrak l}} \nc{\ox}{\bar{\frakx}} \nc{\frakX}{{\mathfrak X}} \nc{\fraky}{{\mathfrak y}} \nc\dop{\delta}
\nc{\Reduce}{{\rm Red}}
\font\cyr=wncyr10 \font\cyrs=wncyr7
\nc{\redt}[1]{\textcolor{red}{#1}}
\nc{\li}[1]{\textcolor{red}{\tt Li:#1}}
\nc{\yu}[1]{\textcolor{blue}{\tt Yu:#1}}
\begin{document}
\title[Weak composition quasi-symmetric functions, Rota-Baxter algebras and Hopf algebras]{Weak composition quasi-symmetric functions, Rota-Baxter algebras and Hopf algebras}

\author{Li Guo}
\address{
Department of Mathematics and Computer Science, Rutgers University, Newark, NJ 07102, USA}
\email{liguo@rutgers.edu}

\author{Jean-Yves Thibon}
\address{Laboratoire d'Informatique Gaspard Monge, Universit\'e Paris-Est Marne-la-Vall\'ee, 5 Boulevard Descartes,
Champs-sur-Marne, 77454 Marne-la-Vall\'ee cedex 2, France}
\email{jyt@univ-mlv.fr}

\author{Houyi Yu}
\address{School  of Mathematics and Statistics, Southwest University, Chongqing, China}
\email{yuhouyi@swu.edu.cn}

\hyphenpenalty=8000
\date{\today}

\begin{abstract}

We introduce the Hopf algebra of quasi-symmetric functions with semigroup exponents generalizing the Hopf algebra $\qsym$  of quasi-symmetric functions.
As a special case we obtain the Hopf algebra $\Wcqsym$ of weak composition quasi-symmetric functions, which
provides a framework for the study of a question proposed by G.-C.~Rota
relating symmetric type functions and Rota-Baxter algebras.
We provide the transformation formulas between the weak composition monomial and
fundamental quasi-symmetric functions, which extends the corresponding results for quasi-symmetric functions. Moreover, we show that $\qsym$ is a Hopf subalgebra and a Hopf quotient
algebra of $\Wcqsym$. Rota's question is addressed by identifying $\Wcqsym$ with the free commutative unitary Rota-Baxter algebra  $\sha(x)$ of weight 1 on generator $x$, which also allows us to equip $\sha(x)$ with a Hopf algebra structure.

\end{abstract}

\subjclass[2010]{05E05,16W99,16T33}

\keywords{Symmetric functions, quasi-symmetric functions, weak compositions, Rota-Baxter algebras, Hopf algebras}

\maketitle

\tableofcontents

\hyphenpenalty=8000 \setcounter{section}{0}


\allowdisplaybreaks

\section{Introduction}\label{sec:int}

We continue the study from \cite{Ygz2016} to address a question of Rota ~\cite{Ro2} on of the relationship between symmetric related functions, especially quasi-symmetric functions, and Rota-Baxter algebras.
In the present paper we focus on the free commutative unitary Rota-Baxter algebras of weight $1$ generated by one element and weak
composition quasi-symmetric functions, a generalization of quasi-symmetric functions.

As a generalization of the algebra of symmetric functions, the algebra $\qsym$ of quasi-symmetric functions was introduced by Gessel~\cite{Ge} in 1984 to deal with the combinatorics of P-partitions and  the counting
of permutations with given descent sets~\cite{Sta2,St1972}.
Most of the studies on quasi-symmetric functions were carried out after the middle 1990s.
Since then quasi-symmetric functions have grown in importance, interacting with many areas in mathematics including
Hopf algebras \cite{Ehr,MR}, discrete geometry \cite{BHW}, representation theory \cite{Hi2} and algebraic topology \cite{BR2008}.
Generalizations and extensions of quasi-symmetric functions have also been introduced, see for example \cite{AS2005,NCS,HK,NTT}. Further details on quasi-symmetric functions
can be found in the monograph \cite{LMW} and the references therein.

The study of Rota-Baxter algebras (called Baxter algebras in the early literature) originated from the study of Baxter~\cite{Ba} in 1960 from his probability study to understand Spitzer's
identity in fluctuation theory. Formulated formally by Rota and his school in the 1960s \cite{Ro1}, a Rota-Baxter algebra
is an associative algebra equipped with a linear operator that generalizes the integral operator in analysis.
Recently, several interesting developments of Rota-Baxter algebras have been made,
with applications in diverse areas in mathematics and theoretical physics,
such as Hopf algebras \cite{EG2006}, operads \cite{Ag2000}, combinatorics \cite{EGP2007}, quantum field theory \cite{CK2000},
number theory \cite{GZ2008} and Yang-Baxter equations \cite{Bai2007}. See \cite{Gub}, as well as \cite{KRY2009}, for a more detailed introduction to this subject.

The first link between symmetric functions and Rota-Baxter algebras was established a long time ago when Rota \cite{Ro1} gave the first explicit construction of free commutative nonunitary Rota-Baxter algebras. He applied this structure to show that Spitzer's identity that he established for Rota-Baxter algebras
has one of its incarnation as the following Waring's identity relating power sum and elementary symmetric functions:
\begin{equation*}
\exp\left (-\sum_{k=1}^\infty (-1)^kt^k p_k(x_1,x_2,\cdots,x_m)/k
    \right) = \sum_{n=0}^\infty e_n(x_1,x_2,\cdots,x_m)t^n
\text{ for all }\ m\geq 1,
\end{equation*}
where
$$
p_k(x_1,x_2,\cdots,x_m)=x_1^k+x_2^k+\cdots+x_m^k,\quad k\geq1
$$
and
$$
e_n(x_1,x_2,\cdots,x_m)=\sum_{1\leq i_1<i_2<\cdots<i_n\leq m}x_{i_1}x_{i_2}\cdots x_{i_n},\quad  n\geq1
$$
are the {\bf power sum symmetric functions} and {\bf elementary symmetric functions}, respectively, in the polynomial ring  $\QQ[x_1,\cdots,x_m]$,
with the convention that $e_0(x_1,x_2,\cdots,x_m)=1$ and $e_n(x_1,x_2,\cdots,x_m)=0$ if $m< n$.

Motivated by such links between symmetric functions and Rota-Baxter algebras, Rota conjectured~\cite{Ro2}
\begin{quote}
a very close relationship exists between the Baxter identity and the algebra of symmetric functions.
\end{quote}
and concluded
\begin{quote}
The theory of symmetric functions of vector arguments (or Gessel functions) fits nicely with Baxter operators; in fact, identities for such functions easily translate into identities for Baxter operators. $\cdots$
In short: Baxter algebras represent the ultimate and most natural generalization of the algebra of symmetric functions.
\end{quote}

The connection of Rota-Baxter algebras and generalized symmetric functions \cite{Ge1987}
envisioned by Rota turned out to be related to another construction by
Gessel \cite{Ge}, the quasi-symmetric functions.
This relationship was gradually established in the following years. First~\cite{EG2006} proved the equivalence of the mixable shuffle product~\cite{G-K1} (also known as the stuffle product and overlapping shuffle product~\cite{Ha}, among others) in
a free commutative Rota-Baxter algebra and the quasi-shuffle product~\cite{Ho} generalizing quasi-symmetric functions. This realized the algebra of quasi-symmetric functions as a large part of a free commutative Rota-Baxter algebra of weight $1$ on one generator and thus equipped this part of the Rota-Baxter algebra with a Hopf algebra structure. See Section~\mref{sec:CBRBAQSYM} for details.

To relate the full free commutative {\em nonunitary} \rbto algebra of weight $1$ with quasi-symmetric functions, the authors of \cite{Ygz2016} introduced the concept of left weak composition (LWC) quasi-symmetric functions, power series which generalize quasi-symmetric functions with analogous properties. They then realized the free commutative {\em nonunitary} \rbto algebra on one generator as the subalgebra LWCQSym of
LWC quasi-symmetric functions.

There the critical step is to
realize an element of the free commutative nonunitary Rota-Baxter algebra $\sha(x)^0$ as a formal power series.
This is achieved by the correspondence which takes a basis element of $\sha(x)^0$, which is in the form of a pure tensor $x^{\alpha_0}\otimes x^{\alpha_1}\otimes\cdots\otimes x^{\alpha_k}$ indexed by a left weak composition $(\alpha_0,\alpha_1,\cdots,\alpha_k)$ (namely  $\alpha_0,\cdots,\alpha_{k-1}\geq 0, \alpha_k\geq1$), and sends it to the power series $x_0^{\alpha_0} M_{(\alpha_1,\cdots,\alpha_k)}$. Here $M_{(\alpha_1,\cdots,\alpha_k)}$ is the generalized monomial quasi-symmetric function
$$
M_{(\alpha_1,\cdots,\alpha_k)}:= \sum_{1\leq i_1<\cdots<i_k} x_{i_1}^{\alpha_1}\cdots x_{i_k}^{\alpha_k}.
$$
Using this approach, it was shown that the linear span of these power series in $\bfk[[x_0,x_1,x_2,\cdots]]$ is a subalgebra isomorphic to the free commutative nonunitary Rota-Baxter algebra of weight $1$ on one generator.

However, this approach does not work for the full free commutative unitary Rota-Baxter algebra $\sha(x)$ since its basis consists of pure tensors $x^{\alpha_0}\ot \cdots \ot x^{\alpha_k}$ are indexed by all weak compositions, not just left weak compositions and the above correspondence is no longer well-defined.
As a simple example, take the element $x\otimes 1=x\ot x^0$ of $\sha(x)$, indexed by the weak composition $\alpha:=(1,0)$. Then it should correspond to
$x_0 M_{(0)}=x_0 \sum_{n\geq 1} x_n^0=x_0\sum_{n\geq 1}1$
which does not make sense. The same problem arises as long as $\alpha$ ends with a zero.
Thus in order to further investigate the relationship between the full free commutative unitary Rota-Baxter algebra and the algebra of quasi-symmetric functions,
we need to look for a context that is more general than formal power series but still share similar properties in order to define quasi-symmetric functions.

Notice that the definition of mixable shuffle products makes sense for any semigroup and not just for the semigroup of natural numbers,
so a possible context to generalize the quasi-symmetric functions is formal power series
with  suitable semigroup exponents.
As it turns out, quasi-symmetric functions with semigroup exponents, when the semigroup can be
embedded into $\NN^r$, have been considered in~\cite{NPT} to explain the isomorphism between shuffle and quasi-shuffle algebras
and deal with Ecalle's formalism of moulds.
We generalizes the results in \cite{NPT} by taking the semigroup to be any additively finite semigroup without zero-divisors.
In particular, when the semigroup is taken to be the monoid $\tilde\NN$ obtaining from the additive monoid $\NN$ of nonnegative integers by adding an extra element $\varepsilon$,
we obtain the algebra of weak composition quasi-symmetric functions ($\Wcqsym$ for short),
which will enable us to interpret the full free commutative unitary Rota-Baxter algebra as a suitable generalization of quasi-symmetric functions.
The interesting point here is the fact that the subsemigroup $\tilde\NN\backslash\{0\}$ is isomorphic to the additive monoid $\NN$,
which induces a bijection between the set of $\tilde\NN$-compositions and the set of weak compositions, so that we have
the term of ``weak composition quasi-symmetric functions".

The outline of this paper is as follows. In Section \ref{sec:unifrba},
we introduce the definitions of formal power series algebras and quasi-symmetric functions with semigroup exponents, and then explore the Hopf algebra structure of the algebra of
quasi-symmetric functions with semigroup exponents.
By taking the semigroup to be $\tilde{\NN}$, in Section \ref{sec:Eqsyms}, we focus on the algebra $\Wcqsym$,
extending the notation of quasi-symmetric functions as a special case of quasi-symmetric functions with semigroup exponent,
and investigate some properties of $\Wcqsym$. More precisely, we first develop the monomial and fundamental bases for $\Wcqsym$ respectively  and establish the transformation formulas for them,
generalizing the corresponding results for quasi-symmetric functions. Then we show that  $\qsym$ is both a Hopf subalgebra
and a Hopf quotient algebra of $\Wcqsym$.
The $\Wcqsym$ permits us to put the free commutative unitary Rota-Baxter algebra $\sha(x)$ on $x$ in the setting of quasi-symmetric functions in Section \ref{sec:CBRBAQSYM}, addressing the question of Rota quoted at the begining of the introduction.
In particular, we use these connections to obtain a Hopf algebra structure on $\sha(x)$, completing the previous efforts~\cite{AGKO,EG2006} on this subject.
\smallskip

\noindent
{\bf Convention.} Unless otherwise specified, an algebra in this paper is assumed to be commutative, defined over a commutative ring $\bfk$ containing $\QQ$ with characteristic $0$.  By a tensor product we mean the tensor product over $\bfk$. Let $\NN$ and $\PP$ denote the set of nonnegative and positive integers respectively.

\section{\Sgqsyms}
\mlabel{sec:unifrba}

In this section we generalize the notion of quasi-symmetric functions to quasi-symmetric functions with semigroup exponents.
When the semigroup is taken to be the additive semigroup $\NN$ of nonnegative integers, we recover the classical quasi-symmetric functions.

\subsection{Formal power series algebras with semigroup exponents}
To begin with, let us generalize the formal power series algebra.

A formal power series is a (possibly infinite) linear combination of monomials $x_{i_1}^{\alpha_1}x_{i_2}^{\alpha_2}\cdots x_{i_k}^{\alpha_k}$ where $\alpha_1,\alpha_2,\cdots,\alpha_k$ are positive integers, which can be regarded as the locus of the map from $X:=\{x_n\,|\,n\geq 1\}$ to $\NN$ sending $x_{i_j}$ to $\alpha_j$, $1\leq j\leq k$, and everything else in $X$ to zero.
Our generalization of the formal power series algebra is simply to replace $\NN$ by a suitable additive monoid with a zero element.

\begin{defn}
Let $B$ be a commutative additive monoid with zero $0$ such that $B\backslash\{0\}$ is a subsemigroup. Let $X$ be a finite or countably infinite totally ordered set of
commutating variables.
The set of {\bf $B$-valued maps} is defined to be
\begin{equation}\label{eq:bsemi}
\map{X}{B}:=\left\{ f:X\to B\,|\, \supp (f) \text{ is finite }\right\},
\end{equation}
where $\supp(f):=\{x\in X\,|\, f(x)\neq 0\}$ denotes the support of $f$.
\end{defn}
The addition on $B$ equips $\map{X}{B}$ with an addition by
$$ (f+g)(x):=f(x)+g(x)\quad  \text{ for all } f, g\in B^X \text{ and } x\in X,$$
making $\map{X}{B}$ into an additive monoid.
Resembling the formal power series, we identify $f\in \map{X}{B}$ with its locus $\{(x,f(x))\,|\,x\in \supp(f)\}$ expressed in the form of a formal product
\begin{align*}
X^f:=\prod_{x\in X}x^{f(x)}=\prod_{x\in \supp(f)}x^{f(x)},
\end{align*}
called a {\bf $B$-exponent monomial}, with the convention $x^0=1$.

By an abuse of notation, the addition on $\map{X}{B}$ becomes
\begin{equation}
X^fX^g=X^{f+g}\quad \text{ for all } f, g\in \map{X}{B}.
\mlabel{eq:xmap}
\end{equation}
We then form the semigroup algebra
$$ \bfk [X]_B:=\bfk \map{X}{B}$$
consisting of linear combinations of $\map{X}{B}$, called the algebra of {\bf $B$-exponent polynomials}.
Similarly, we can define the free $\bfk$-module $\bfk[[X]]_B$ consisting of possibly infinite linear combinations of $\map{X}{B}$, called {\bf $B$-exponent formal power series}.
If $B$ is {\bf additively finite} in the sense that for any $a\in B$ there are finite number of pairs $(b,c)\in B^2$ such that $b+c=a$,
then the multiplication in Eq.~(\mref{eq:xmap}) extends by bilinearity to a multiplication on $\bfk[[X]]_B$, making it into a $\bfk$-algebra, called the
{\bf algebra of formal power series with semigroup  $B$-exponents}.

Let $B$ be a finitely generated free commutative  additively finite monoid with generating set $\{b_1,b_2,\cdots,b_t\}$.
Then
\begin{align*}
\bfk [X]_B=\bfk[x^{b_i}|1\leq i\leq t,x\in X].
\end{align*}
For example, taking $B$ as the additive monoid $\NN$ of nonnegative integers, then $\map{X}{B}$ is simply the free monoid generated by $X$ and $\bfk[X]_B$ is
the free commutative algebra $\bfk[X]$.

\subsection{Quasi-symmetric functions with semigroup exponents}\label{sec:qsymwse}

We now generalizes the quasi-symmetric functions to the context of formal power series with semigroup exponents.
See~\cite{NPT} for semigroup exponent quasi-symmetric functions in the study of moulds.

Let $B$ be a commutative additively finite monoid with zero $0$ such that $B\backslash\{0\}$ is a subsemigroup, and let $b\in B$.
A {\bf weak $B$-composition} of $b$ is a finite sequence $\mathbf{\alpha}=(\alpha_1,\alpha_2,\cdots,\alpha_k)$ of elements of $B$ which sum to $b$.
We call the $\alpha_i$ for $1\leq i\leq k$ the {\bf entries} of $\alpha$ and $\ell(\alpha):=k$ the {\bf length} of $\alpha$.
The {\bf weight} of a weak $B$-composition $\alpha$, denoted by $|\alpha|$, is the sum of its entries.
By convention we denote by $\emptyset$ the unique weak $B$-composition whose weight and length are $0$, called the {\bf empty weak $B$-composition}.
We let $\WC(B)$ denote the set of all weak $B$-compositions.

A {\bf $B$-composition} $\alpha$ of a non-zero element $b\in B$ is a finite-ordered list of non-zero elements whose sum is $b$.
Thus every $B$-composition is a weak $B$-composition, but not vice-versa. For convenience, the empty weak $B$-composition $\emptyset$ is also called the {\bf empty $B$-composition}.
We denote the set of $B$-compositions of $b$ by $\mathcal{C}(B,b)$, and write $\mathcal{C}(B):=\bigcup\limits_{0\neq b\in B}\mathcal{C}(B,b)$.

Given a weak $B$-composition $\alpha$, the {\bf reversal} of $\alpha$, denoted by $\alpha^{r}$, is obtained by writing the entries of $\alpha$ in the reverse order.
For a pair of weak $B$-compositions $\alpha=(\alpha_1,\cdots,\alpha_k)$ and $\beta=(\beta_1,\cdots,\beta_{\ell})$, the {\bf concatenation}
of $\alpha$ and $\beta$ is
\begin{align*}
\alpha\cdot\beta:=(\alpha_1,\cdots,\alpha_k,\beta_1,\cdots,\beta_{\ell}).
\end{align*}

When the monoid $B$ is taken to be the commutative additive monoid $\NN$ of nonnegative integers, we obtain the definition of
{\bf weak compositions} and {\bf compositions} \cite{Sta} in the literature,
which will be called weak $\NN$-compositions and $\NN$-compositions respectively in what follows for the sake of clarity. Furthermore, we
write $\alpha\models n$ if $\alpha$ is an $\NN$-composition of $n$.

The refining order defined as follows plays an important role in the theory of quasi-symmetric functions.
Let $n$ be a positive integer. Given an $\NN$-composition $\alpha=(\alpha_1,\alpha_2,\cdots,\alpha_k)\models n$, define its associated {\bf descent set}
\begin{align*}
{\rm set}(\alpha)=\{\alpha_1,\alpha_1+\alpha_2,\cdots,\alpha_1+\alpha_2+\cdots+\alpha_{k-1}\}\subseteq[n-1],
\end{align*}
where $[n]$ is the set $\{1,2,\cdots,n\}$ for any nonnegative integer $n$.
This gives a bijection between the set $\mathcal{C}(\NN,n)$ of all $\NN$-compositions of $n$
and the subsets of $[n-1]$.
The {\bf refining order}, denoted by $\preceq$, on $\mathcal{C}(\NN,n)$ is defined by
\begin{align*}
\alpha\preceq\beta \quad {\rm if\ and\ only\ if}\ {\rm set}(\beta)\subseteq{\rm set}(\alpha).
\end{align*}

For example, if $\alpha=(1,3,2)$, $\beta=(4,2)$, $\gamma=(6,5)$, then $\alpha\models6$, $\beta\models6$, $\text{set}(\alpha)=\{1,4\}$ and $\text{set}(\beta)=\{4\}$ so that
$\alpha\preceq \beta$. Moreover, $\alpha^r=(2,3,1)$ and $\alpha\cdot\gamma=(1,3,2,6,5)$.

\begin{defn}\label{def:bquasisymm} Let $B$ be a commutative additively finite monoid with zero $0$ such that $B\backslash\{0\}$ is a subsemigroup,
and let $X=\{x_1<x_2<\cdots\}$ be an ordered set of
mutually commuting variables. Consider the formal power series algebra $\bfk [[X]]_B$
over $\bfk$.
A formal power series $f\in \bfk [[X]]_B$ is called a {\bf $B$-quasi-symmetric function} if, for any $B$-composition $(\alpha_1,\alpha_2,\cdots,\alpha_k)$, the coefficients of
$y_1^{\alpha_1}y_2^{\alpha_2}\cdots y_k^{\alpha_k}$
and $z_1^{\alpha_1}z_2^{\alpha_2}\cdots z_k^{\alpha_k}$ in $f$ are equal for all totally ordered subsets of indeterminates  $y_1<y_2<\cdots<y_k$
and $z_1<z_2<\cdots<z_k$. We denote the set of all $B$-quasi-symmetric functions  by $\qsym(X)_B$, or $\qsym_B$ for short.
\end{defn}

Analogous to the monomial basis for quasi-symmetric functions,
for a $B$-composition $\mathbf{\alpha}=(\alpha_1,\alpha_2,\cdots,\alpha_k)$, consider the {\bf monomial $B$-quasi-symmetric function}
\begin{align}\label{eq:quasimf}
M_{\mathbf{\alpha}}=\sum_{1\leq i_1<i_2<\cdots <i_k} x_{i_1}^{\alpha_1}x_{i_2}^{\alpha_2}\cdots x_{i_k}^{\alpha_k}
\end{align}
indexed by the $B$-composition $\mathbf{\alpha}$, with the notation $M_{\emptyset}=1$. If $\alpha$ is a nonzero element of $B$, then we write $M_{(\alpha)}=M_{\alpha}$ for simplicity.
We show that the family of monomial $B$-quasi-symmetric functions forms a basis of $\qsym_B$.

\begin{lemma}\label{lem:bmonomialbasis}
The set $\{M_{\alpha}|\alpha\in \mathcal{C}(B)\}$ is a $\bfk$-basis for $\qsym_B$.
\end{lemma}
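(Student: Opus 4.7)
The plan is to establish linear independence and spanning separately, both by direct coefficient comparison in $\bfk[[X]]_B$.

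For linear independence, I would exploit the fact that distinct monomial $B$-quasi-symmetric functions have disjoint supports. If $\alpha=(\alpha_1,\ldots,\alpha_k)$ is a $B$-composition, then a given $B$-exponent monomial $x_{i_1}^{\gamma_1}\cdots x_{i_m}^{\gamma_m}$ with $i_1<\cdots<i_m$ and all $\gamma_j\in B\setminus\{0\}$ appears in $M_\alpha$ if and only if $m=k$ and $(\gamma_1,\ldots,\gamma_m)=\alpha$, because the strictly increasing index constraint in~(\ref{eq:quasimf}) pins down the canonical reading order of the exponents. Consequently any vanishing linear combination $\sum_\alpha c_\alpha M_\alpha=0$ collapses term-by-term, forcing each $c_\alpha=0$.

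For spanning, fix $f\in\qsym_B$. For each $B$-composition $\alpha=(\alpha_1,\ldots,\alpha_k)$, let $c_\alpha$ be the coefficient of $x_1^{\alpha_1}x_2^{\alpha_2}\cdots x_k^{\alpha_k}$ in $f$. The quasi-symmetry condition of Definition~\ref{def:bquasisymm}, applied with $y_j=x_j$ and $z_j=x_{i_j}$ for an arbitrary increasing sequence $i_1<\cdots<i_k$, says precisely that the coefficient of $x_{i_1}^{\alpha_1}\cdots x_{i_k}^{\alpha_k}$ in $f$ equals $c_\alpha$. Thus $f$ and $\sum_\alpha c_\alpha M_\alpha$ agree on every $B$-exponent monomial, so they coincide as elements of $\bfk[[X]]_B$. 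The formal sum is well-defined because, by the disjoint-support observation, each $B$-exponent monomial lies in the support of at most one $M_\alpha$.

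I expect no substantive obstacle; the argument is essentially bookkeeping. The only delicate point is interpreting ``basis'' in the possibly infinite-dimensional topological sense natural for formal power series, but the disjoint-support property handles uniqueness and the quasi-symmetry property handles existence simultaneously. If one wishes to recover the constant term of $f$, it suffices to adjoin $M_\emptyset=1$ to the indexing set (or to regard the empty $B$-composition as an element of $\mathcal{C}(B)$), a minor convention that does not affect the substance of the proof.
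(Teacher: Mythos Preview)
The proposal is correct and takes essentially the same approach as the paper. Both extract coefficients of particular monomials to establish linear independence (the paper uses the specific monomial $x_1^{\alpha_1}\cdots x_{\ell(\alpha)}^{\alpha_{\ell(\alpha)}}$, which is a special case of your disjoint-support observation) and derive spanning directly from Definition~\ref{def:bquasisymm}; your write-up is somewhat more explicit about the spanning step and about the topological-basis subtlety, both of which the paper leaves implicit.
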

\begin{proof}
By Definition \ref{def:bquasisymm}, any $B$-quasi-symmetric functions $f\in\qsym_B$ can be written as a $\bfk$-linear combination of $\{M_{\alpha}|\alpha\in \mathcal{C}(B)\}$.
Hence we need to show that $\{M_{\alpha}|\alpha\in \mathcal{C}(B)\}$ is linear independent.
Assume that $\sum_{\alpha\in \Lambda}c_{\alpha}M_{\alpha}=0$ where $\Lambda$ is a finite set of $B$-compositions and $c_{\alpha}\in\bfk$ for all $\alpha\in \Lambda$.
Notice that $\qsym_B\subseteq \bfk [[X]]_B$ and that the set of all $B$-exponent monomials forms a $\bfk$-basis for $\bfk [[X]]_B$.
So considering the expression Eq.~\eqref{eq:quasimf} for each $M_{\alpha}$,  we must have
\begin{align*}
\sum_{\alpha\in \Lambda}c_\alpha x_1^{\alpha_1}x_2^{\alpha_2}\cdots x_{\ell{(\alpha)}}^{\alpha_\ell{(\alpha)}}=0,
\end{align*}
and hence $c_{\alpha}=0$ for all $\alpha\in \Lambda$. This completes the proof.
\end{proof}

Therefore, if we define the degree of $M_\alpha$ to be the weight of $\alpha$ for a  $B$-composition $\alpha$,  then  $\qsym_B$ is a $B$-graded free $\bfk$-module,
denoted by $\qsym_B=\bigoplus_{b\in B}\qsym_B^b$, where $\qsym_B^b$ is the free $\bfk$-module spanned by $\{M_\alpha|\alpha\in \mathcal{C}(B,b)\}$.
We next show that $\qsym_B$ is closed
under the natural product of formal power series and moreover  that
the multiplication rule of two monomial $B$-quasi-symmetric functions is dictated by the quasi-shuffle product defined by the following recursion.

Let  $\bfk\mathcal{C}(B)=\bigoplus_{\alpha\in \mathcal{C}(B)}\bfk\alpha$ be the free $\bfk$-module generated by
the set of all $B$-compositions. For simplicity,
if $a\in B$ and  $\alpha=(\alpha_1,\alpha_2,\cdots,\alpha_k)\in\mathcal{C}(B)$, then we write $(a,\alpha)=(a,\alpha_1,\alpha_2,\cdots,\alpha_k)$ for short.
Now define the {\bf quasi-shuffle product}, denoted by $*$, on $\bfk\mathcal{C}(B)$ by requiring that $\emptyset*\alpha=\alpha*\emptyset=\alpha$
for any $B$-composition $\alpha$, and that,
for any $B$-compositions $\alpha$, $\beta$ and $a,b\in B$,
\begin{align}\label{eq:compoverlapshuprod}
(a,\alpha)*(b,\beta)=(a,\alpha*(b,\beta))+(b,(a,\alpha)*\beta)+(a+b,\alpha*\beta)).
\end{align}

\begin{prop}\label{prop:kcstoqsyms}
For any $\alpha,\beta\in \mathcal{C}(B)$, we have $M_{\alpha}M_{\beta}=\sum\limits_{\gamma\in \mathcal{C}(B)}\langle\gamma, \alpha*\beta\rangle M_{\gamma}$,
where $\langle\gamma, \alpha*\beta\rangle$ is the coefficient of the $B$-composition $\gamma$ in $\alpha*\beta$.
In other words, the assignment $\alpha\mapsto M_{\alpha}$ defines a homogeneous isomorphism from the quasi-shuffle algebra $\bfk\mathcal{C}(B)$ to $\qsym_B$.
\end{prop}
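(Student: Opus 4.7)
The plan is to establish the multiplication formula
\begin{equation*}
M_\alpha M_\beta = \sum_{\gamma\in \mathcal{C}(B)}\langle \gamma,\alpha*\beta\rangle M_\gamma
\end{equation*}
by induction on $\ell(\alpha)+\ell(\beta)$, mirroring the recursive definition \eqref{eq:compoverlapshuprod} of the quasi-shuffle product. The base case, when either $\alpha$ or $\beta$ equals $\emptyset$, is immediate from $M_\emptyset=1$ together with the convention $\emptyset*\gamma=\gamma*\emptyset=\gamma$.

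For the inductive step I would write $\alpha=(a,\alpha')$ and $\beta=(b,\beta')$ with $a,b\in B\setminus\{0\}$, and expand $M_\alpha M_\beta$ via the defining sum \eqref{eq:quasimf}. The resulting double sum over index sequences $i_1<\cdots<i_k$ and $j_1<\cdots<j_\ell$ is then partitioned according to the trichotomy $i_1<j_1$, $j_1<i_1$, or $i_1=j_1$ of the two leading indices. In the third case the contributions at position $i_1$ combine into $x_{i_1}^{a+b}$; since $B\setminus\{0\}$ is a subsemigroup, $a+b\neq 0$, so $(a+b,\alpha'*\beta')$ is a genuine $B$-composition and each $M_{(a+b,\gamma)}$ appearing in the inductive expansion is well-defined. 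After factoring out the distinguished leading variable $x_n$ with $n=\min(i_1,j_1)$, the residual inner sum is in each of the three cases a product of two monomial $B$-quasi-symmetric functions on the tail variables $\{x_{n+1},x_{n+2},\ldots\}$: namely $M_{\alpha'}M_{(b,\beta')}$, $M_{(a,\alpha')}M_{\beta'}$, and $M_{\alpha'}M_{\beta'}$, respectively. Applying the induction hypothesis and resumming $n\geq 1$ against the leading factor rebuilds a monomial $B$-quasi-symmetric function with leading entry $a$, $b$, or $a+b$ accordingly, and collecting the three contributions reproduces exactly the three summands on the right-hand side of \eqref{eq:compoverlapshuprod}.

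The main obstacle is the variable re-indexing in the inductive step: one must verify that, for any $B$-composition $\gamma'$ and any $c\in B\setminus\{0\}$, the sum $\sum_{n\geq 1}x_n^c$ against the copy of $M_{\gamma'}$ living on the tail variables $\{x_{n+1},x_{n+2},\ldots\}$ reassembles into $M_{(c,\gamma')}$ on the full variable set $X$. This uses only that $X$ is countably infinite and totally ordered, but needs care to avoid double counting when the leading entries $a,b,a+b$ happen to coincide; reassuringly, overlaps of this sort are precisely what the quasi-shuffle recursion already accounts for by summing all three terms without restriction.

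The second assertion is then a formality. By Lemma \ref{lem:bmonomialbasis}, the assignment $\alpha\mapsto M_\alpha$ extends uniquely to a $\bfk$-linear isomorphism between $\bfk\mathcal{C}(B)$ and $\qsym_B$; the $B$-grading on $\qsym_B$ was defined so that $M_\alpha$ has degree $|\alpha|$, making the map homogeneous; and the multiplication formula just proved exhibits it as an algebra homomorphism. Hence it is a homogeneous $\bfk$-algebra isomorphism from $(\bfk\mathcal{C}(B),*)$ onto $\qsym_B$.
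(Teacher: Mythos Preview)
Your proposal is correct and follows essentially the same approach as the paper: induction on $\ell(\alpha)+\ell(\beta)$, expanding the double sum defining $M_\alpha M_\beta$, and partitioning according to the trichotomy on the leading indices to recover the quasi-shuffle recursion \eqref{eq:compoverlapshuprod}. Your remark about possible double counting when $a$, $b$, or $a+b$ coincide is unnecessary, since the three cases are distinguished by the \emph{indices} $i_1,j_1$ rather than by the exponent values, and are therefore always disjoint.
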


Thus, we will write $M_{\alpha}M_{\beta}=M_{\alpha*\beta}$, for simplicity.
\begin{proof}
Let $\alpha=(\alpha_1,\alpha_2,\cdots,\alpha_k)$, $\beta=(\beta_1,\beta_2,\cdots,\beta_l)$ be $B$-compositions. Then, by Eq.~\eqref{eq:quasimf}, we have
\begin{align}\label{eq:malmbe=quasisproduct}
M_{\alpha}M_{\beta}=\sum_{\substack{{1\leq n_1<n_2<\cdots<n_k}\\{1\leq m_1<m_2<\cdots<m_l}}}
x_{n_1}^{\alpha_1} x_{n_2}^{\alpha_2}\cdots x_{n_k}^{\alpha_k} x_{m_1}^{\beta_1} x_{m_2}^{\beta_2}\cdots x_{m_l}^{\beta_l}.
\end{align}
The proof now follows by induction on $k+l$. If $k+l=0$, then $k=l=0$ so that $\alpha=\beta=\emptyset$ and hence $M_{\alpha}=M_{\beta}=1$,
so the assertion is true. Now assume that the desired identity holds for $k+l\leq s$ for a given $s\geq 0$ and consider the case $k+l=s+1$.
Then, by comparing the sizes of $n_1$ and $m_1$, we have
\begin{align*}
M_{\alpha}&M_{\beta}=\sum_{1\leq n_1}x_{n_1}^{\alpha_1}\sum_{\substack{{n_1<n_2<\cdots<n_k}\\{n_1< m_1<m_2<\cdots<m_l}}}
 x_{n_2}^{\alpha_2}\cdots x_{n_k}^{\alpha_k} x_{m_1}^{\beta_1} x_{m_2}^{\beta_2}\cdots x_{m_l}^{\beta_l}+\\
 &\sum_{1\leq m_1}x_{m_1}^{\beta_1}\sum_{\substack{{m_1< n_1<n_2<\cdots<n_k}\\{m_1<m_2<\cdots<m_l}}}
 x_{n_2}^{\alpha_2}\cdots x_{n_k}^{\alpha_k} x_{m_1}^{\beta_1} x_{m_2}^{\beta_2}\cdots x_{m_l}^{\beta_l}
 +\sum_{1\leq n_1=m_1}x_{n_1}^{\alpha_1+\beta_1}\sum_{\substack{{n_1<n_2<\cdots<n_k}\\{n_1<m_2<\cdots<m_l}}}
 x_{n_2}^{\alpha_2}\cdots x_{n_k}^{\alpha_k} x_{m_1}^{\beta_1} x_{m_2}^{\beta_2}\cdots x_{m_l}^{\beta_l}.
\end{align*}
By the induction hypothesis, we obtain that
$$M_{\alpha}M_{\beta}=M_{(\alpha_1,(\alpha_2,\alpha_3,\cdots,\alpha_k)*\beta)}+M_{(\beta_1,\alpha*(\beta_2,\beta_3,\cdots,\beta_l))}+
M_{(\alpha_1+\beta_1,(\alpha_2,\alpha_3,\cdots,\alpha_k)*(\beta_2,\beta_3,\cdots,\beta_l))},$$
that is, $M_{\alpha}M_{\beta}=\sum\limits_{\gamma\in \mathcal{C}(B)}\langle\gamma, \alpha*\beta\rangle M_{\gamma}$, as desired.
\end{proof}

Thus sums and products of $B$-quasi-symmetric functions are again $B$-quasi-symmetric. In other words, the set  $\qsym_B$ of all $B$-quasi-symmetric functions forms
a subalgebra of $\bfk [[X]]_B$, which is called the {\bf algebra of quasi-symmetric functions with semigroup $B$-exponents}.

We remark that it is certainly not the case that $\qsym_B$ is an algebra for all commutative additive monoids with zero $0$.
For example, if $B\backslash\{0\}$ is not a subsemigroup of $B$, then there exist
$a,b\in B\backslash\{0\}$ such that $a+b=0$, so one has $M_aM_b=M_{(a,b)}+M_{(b,a)}+M_{0}$. However, we would have
$ M_{0}=\sum_{i} x_i^0$ which does not make sense by definition.

When the semigroup $B$ is specialized to the commutative additive monoid  $\NN$ of nonnegative integers, we obtain the  algebra $\qsym$ of quasi-symmetric functions.
Let $\qsym_n$ denote the space of homogeneous
quasi-symmetric functions of degree $n$, then $\qsym=\bigoplus_{n\geq0}\qsym_n$.
Here $\qsym_0$ is spanned by $M_{\emptyset}=1$, and for each $n\geq1$, $\qsym_n$ has a natural {\bf monomial basis}, given by the set of all $M_{\alpha}$ for $\alpha=(\alpha_1,\alpha_2,\cdots,\alpha_k)\models n$, where
\begin{align*}
M_{\alpha}:=\sum_{1\leq i_1<i_2<\cdots<i_k}x_{i_1}^{\alpha_1}x_{i_2}^{\alpha_2}\cdots x_{i_k}^{\alpha_k}.
\end{align*}

\subsection{Hopf algebra structure of $\qsym_B$}
In this subsection, we will show that $\qsym_B$ forms a $B$-graded Hopf algebra for any
commutative additively finite monoid $B$ with zero $0$ such that $B\backslash\{0\}$ is a subsemigroup.
As in the case of $\qsym$, a coproduct $\Delta_B$ and a counit $\epsilon_B$ can be defined on $\qsym_B$ by the following formulas on the monomial basis elements:
\begin{align}\label{mqsydeltcop1}
\Delta_B(M_{\alpha})=\sum_{\alpha=\beta\cdot \gamma}M_\beta\otimes M_\gamma=\sum_{i=0}^kM_{(\alpha_1,\cdots,\alpha_i)}\otimes M_{(\alpha_{i+1},\cdots,\alpha_k)},
\end{align}
\begin{align}\label{mqsydeltcop2}
\epsilon_B(M_{\alpha})=&\delta_{\alpha,\emptyset},
\end{align}
where $\alpha=(\alpha_1,\cdots,\alpha_k)$ is a $B$-composition.
It is easy to see that the coproduct is coassociative. The fact that both the coproduct and the counit
are algebra homomorphisms can be proved analogously to~\cite[Theorem 3.1]{Ho}, making $\qsym_B$ into a bialgebra.

Furthermore, $\qsym_B$  is a $B$-graded bialgebra. More precisely, we have  $\qsym_B=\bigoplus_{b\in B}\qsym_B^b$  and
\begin{align*}
(\qsym_B^b)(\qsym_B^c)\subseteq \qsym_B^{b+c},\qquad
\Delta_B(\qsym_B^a)\subseteq \bigoplus_{b+c=a}\qsym_B^b\otimes \qsym_B^c
\end{align*}
for all $a,b,c\in B$.

However the fact that any connected graded bialgebra is naturally a Hopf algebra \cite{Tak1971} does not apply here since the grading is not $\NN$-graded.
Therefore, to show that the bialgebra $\qsym_B$ admits the structure of a Hopf algebra,
we will show directly that the antipode exists.
For this purpose, we generalize the refining order $\preceq$ on the set $\mathcal{C}(\NN,n)$ of $\NN$-compositions of $n$ to an order, denoted by $\leq$,
on the set $\mathcal{C}(B,b)$ of all $B$-compositions of $b$ where $0\neq  b\in B$.
Let $\alpha=(\alpha_1,\cdots,\alpha_k)$ be a $B$-composition, and let $J=(j_1,\cdots,j_l)$ be an $\NN$-composition of $k$, the length of $\alpha$.
The $B$-composition $J\circ\alpha$ is defined by
\begin{align*}
J\circ\alpha=(\alpha_1+\cdots+\alpha_{j_1},\alpha_{j_1+1}+\cdots+\alpha_{j_1+j_2},\cdots,\alpha_{j_1+j_2+\cdots+j_{l-1}+1}+\cdots+\alpha_{k}).
\end{align*}
For two $B$-compositions $\alpha,\beta$ of the same weight, if there exists an $\NN$-composition $J\models \ell(\alpha)$ such that $\beta=J\circ \alpha$, then we say $\alpha\leq\beta$.
Generally speaking, the $\NN$-composition $J$, if any, such that $\beta=J\circ \alpha$ is not unique, since $B$ may not be a cancelative monoid (that is, $a+c=b+c \Rightarrow a=b$).
For example, let $B$ be a left zero semigroup with zero,  that is, $a+b=a$ for all nonzero elements $a,b\in B$, and let $\alpha=(a,b,a,a,c)$, $\beta=(a,a)$ be $B$-compositions.
Then we have $\alpha\leq (2,3)\circ\alpha=\beta$. On the other hand, we also have
$\alpha\leq (3,2)\circ\alpha=\beta$.

It is obvious that the partial order $\leq$ on the set $\mathcal{C}(B,b)$ of $B$-compositions of $b$ is generated by  the covering relation
\begin{align*}
(\alpha_1,\cdots,\alpha_{i},\alpha_{i+1},\cdots,\alpha_n)\leq(\alpha_1,\cdots,\alpha_{i}+\alpha_{i+1},\cdots,\alpha_n).
\end{align*}
In other words, if $\alpha\leq \beta$, then we can obtain the entries of $\beta$ by
adding together adjacent entries of $\alpha$.

\begin{prop}\label{antieqsym}
Let $B$ be a commutative additively finite monoid with zero $0$ such that $B\backslash\{0\}$ is a subsemigroup.
Then $\qsym_B$ is a Hopf algebra, where the antipode $S_B$ is given by
\begin{align}\label{antipodeqesym0}
S_B(M_{\alpha})=(-1)^{\ell(\alpha)}\sum_{J\models \ell(\alpha)}M_{J\circ\alpha^r}.
\end{align}
\end{prop}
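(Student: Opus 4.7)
The strategy is to verify the antipode identity directly on the monomial basis. Since $\qsym_B$ is commutative, the two-sided antipode axiom reduces to checking $m \circ (S_B \otimes \id) \circ \Delta_B = \eta_B \circ \epsilon_B$ on each $M_\alpha$. The case $\alpha = \emptyset$ is trivial, so the real content is to prove, for every nonempty $B$-composition $\alpha = (\alpha_1,\ldots,\alpha_k)$, the identity
\begin{equation*}
\sum_{i=0}^{k} (-1)^{i} \sum_{J \models i} M_{J \circ (\alpha_i,\ldots,\alpha_1)} \, M_{(\alpha_{i+1},\ldots,\alpha_k)} = 0.
\end{equation*}

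I would first expand every product on the left-hand side by the quasi-shuffle rule of Proposition \ref{prop:kcstoqsyms}, which converts the statement into a purely combinatorial identity in $\bfk\mathcal{C}(B)$: for each fixed $B$-composition $\delta$, the signed count of triples $(i,J,\sigma)$, where $J \models i$ and $\sigma$ is a summand of the quasi-shuffle $(J \circ (\alpha_i,\ldots,\alpha_1)) * (\alpha_{i+1},\ldots,\alpha_k)$ equal to $\delta$, must vanish. I would then produce a sign-reversing, $\delta$-preserving involution on this set of triples by locating the distinguished position in $\delta$ where the ``split point'' at $i$ meets the quasi-shuffle and toggling whether the adjacent entry of $\alpha$ is treated as belonging to the reversed prefix (contributing to $J \circ (\alpha_i,\ldots,\alpha_1)$, possibly merged with its predecessor via the $+$-branch of quasi-shuffle) or as the first entry of the suffix $(\alpha_{i+1},\ldots,\alpha_k)$. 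Because $\alpha$ is nonempty, at least one such toggle is always available, so the involution is fixed-point free. This is the same argument used by Ehrenborg and Malvenuto--Reutenauer in the classical $\qsym$ case, and it transfers verbatim because it uses only the combinatorics of lengths and concatenations, never any arithmetic property of $\NN$.

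An alternative and perhaps cleaner route is to observe that the length filtration $F_n = \bigoplus_{\ell(\alpha) \leq n} \bfk M_\alpha$ is compatible with both product and coproduct, and that the reduced coproduct $\tilde\Delta_B(M_\alpha) = \Delta_B(M_\alpha) - 1 \otimes M_\alpha - M_\alpha \otimes 1$ lands in $F_{\ell(\alpha)-1} \otimes F_{\ell(\alpha)-1}$. Hence $\qsym_B$ is a conilpotent bialgebra and Takeuchi's formula $S_B = \sum_{n \geq 0} (-1)^n m^{(n-1)} \circ \tilde\Delta^{(n-1)}$ produces an antipode, after which the closed form can be deduced by induction on $\ell(\alpha)$ using the deconcatenation coproduct.

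The main obstacle is precisely that the Cartier--Milnor--Moore style result ``a connected $\NN$-graded bialgebra has an antipode'' does not apply here, since the grading is only by the semigroup $B$. Hence either the conilpotency of the length filtration must be invoked or the involution must be written out explicitly; I expect the bookkeeping of the involution, in particular treating the merge-branch $(\alpha_1+\beta_1,\alpha*\beta)$ of the quasi-shuffle on equal footing with the two interleaving branches, to be the subtle part of the verification.
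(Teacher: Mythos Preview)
Your first approach---expanding each product via the quasi-shuffle rule and cancelling by a sign-reversing involution keyed to the split point---is essentially the paper's own proof. The paper organizes the cancellation by classifying each $M_\beta$ in the expansion of $M_{J\circ(\alpha_k,\ldots,\alpha_1)}M_{(\alpha_{k+1},\ldots,\alpha_n)}$ according to the ``type'' of its first entry (whether it is $\alpha_k+\cdots+\alpha_j$, or $\alpha_{k+1}$, or $\alpha_{k+1}+\alpha_k+\cdots+\alpha_j$), and shows that terms of type $i$ for $1\leq i\leq n-1$ cancel in pairs between the summands indexed by $k=i$ and $k=i-1$, with only the type-$n$ terms surviving. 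This is precisely your toggle stated in slightly different language; the paper credits the argument to Hoffman rather than Ehrenborg or Malvenuto--Reutenauer.

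Your second approach via the length filtration is correct and genuinely different. The paper explicitly remarks that the standard ``connected $\NN$-graded bialgebra has an antipode'' argument does not apply because the grading is by $B$, and therefore chooses to verify the antipode identity directly. Your observation that the filtration by $\ell(\alpha)$ makes the coalgebra conilpotent---since deconcatenation strictly lowers length in each tensor factor---sidesteps that obstacle cleanly and yields existence of the antipode via Takeuchi, with the closed form following by a short induction. This route is more structural and avoids the term-by-term bookkeeping; the paper's direct verification has the compensating virtue of producing the explicit formula in a single pass rather than separating existence from the formula.
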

\begin{proof}
The proof of this statement is quite similar to that of \cite[Theorem 3.2]{Ho}, but we include a detailed proof here for completeness.

It suffices to show that $S_B$ is the antipode since $\qsym_B$ is a bialgebra.
With Sweedler's sigma notation \cite{Swe},
it suffices to show that the  linear map $S_B:\qsym_B\rightarrow \qsym_B$ satisfies the condition
\begin{align}\label{criteriaantipodeqsym}
\sum_{\alpha}S_B(M_{\alpha_{(1)}})M_{\alpha_{(2)}}=\epsilon_B(M_{\alpha})\cdot1=\sum_{\alpha}M_{\alpha_{(1)}}S_B(M_{\alpha_{(2)}})
\end{align}
for all $B$-compositions $\alpha$, where $\Delta_B(M_{\alpha})=\sum_{\alpha}M_{\alpha_{(1)}}\otimes M_{\alpha_{(2)}}$.
The proof of the two identities are analogous, so we only show the first one.
By Eq.~\eqref{antipodeqesym0}, $S_B(1)=1=\epsilon_B(1)\cdot1$.
Note that $\epsilon_B(M_{\alpha})=0$ if $\alpha\neq\emptyset$, so it is enough to show
\begin{align}\label{eq:antibqsymbamm0}
\sum_{\alpha}S_B(M_{\alpha_{(1)}})M_{\alpha_{(2)}}=0
\end{align}
for all nontrivial $B$-compositions $\alpha$.

We apply the induction on $\ell(\alpha)$, the length of $\alpha$.
If $\ell(\alpha)=1$, then $S_B(M_\alpha)=-M_{\alpha}$ by a simple computation, and hence Eq.~\eqref{eq:antibqsymbamm0} holds.
For $n\geq 2$, supposing that Eq.~\eqref{eq:antibqsymbamm0} holds for all $\alpha\in \mathcal{C}(B)$ with $\ell(\alpha)< n$.
Let $\alpha=(\alpha_1,\alpha_2,\cdots,\alpha_n)$ be a $B$-composition.
Then
\begin{align*}
\sum_{\alpha}S_B(M_{\alpha_{(1)}})M_{\alpha_{(2)}}=&\sum_{k=0}^{n}S_B\left(M_{(\alpha_1,\cdots,\alpha_k)}\right)M_{(\alpha_{k+1},\cdots,\alpha_n)}\cr
=&\sum_{k=0}^{n}(-1)^k\sum_{J\models k}M_{J\circ(\alpha_k,\cdots,\alpha_1)}M_{(\alpha_{k+1},\cdots,\alpha_n)}.
\end{align*}
So it suffices to show
\begin{align}\label{antiqsye2}
(-1)^{n}\sum_{J\models n}M_{J\circ\alpha^r}
=&\sum_{k=0}^{n-1}(-1)^{k+1}\sum_{J\models k}M_{J\circ(\alpha_k,\cdots,\alpha_1)}M_{(\alpha_{k+1},\cdots,\alpha_n)}.
\end{align}
Now the first entry of $\beta$
for each monomial $B$-quasi-symmetric function $M_{\beta}$ occurs in the above expansion on the right hand is one of the following three cases:
$\alpha_k+\cdots+\alpha_j$, $\alpha_{k+1}$, or $\alpha_{k+1}+\alpha_{k}+\cdots+\alpha_{j}$ for some $1\leq j\leq k$. Here a distinction is made between $\alpha_{i_1}+\alpha_{i_1-1}+\cdots+\alpha_{i_2}$ and
$\alpha_{i_1}+\alpha_{i_1-1}+\cdots+\alpha_{i_3}$
for distinct $i_2$ and $i_3$, although they may have the same  value (see Example \ref{antipexameqsym} below).
We say that the term is of type $k$ in the first case, and of  type $k+1$ in the latter two cases.
Now consider a  monomial $B$-quasi-symmetric function  that appears on the right hand of Eq.~\eqref{antiqsye2}.
If it has type $i$ with $1\leq i\leq n-1$, then it will occur for both $k=i$  and $k=i-1$, and the two occurrences will have opposite signs and hence will cancel
each other.
Thus the only monomial $B$-quasi-symmetric functions that do not cancel are those of type $n$, which  will  appear only for
$k=n-1$ and have the coefficient $(-1)^n$. This gives the left hand side of Eq.~\eqref{antiqsye2}, completing the proof.
\end{proof}

\section{Weak composition quasi-symmetric functions}
\mlabel{sec:Eqsyms}

Now we consider the case when the commutative additively finite monoid $B$ is the monoid obtained from the additive monoid $\NN$ of nonnegative integers by adjoining a new element $\varepsilon$,
that is,  $B=\tilde{\NN}:=\mathbb{N}\cup \{\varepsilon\}$,
satisfying $0+\varepsilon=\varepsilon+0=\varepsilon+\varepsilon=\varepsilon$ and $n+\varepsilon=\varepsilon+n=n$ for all $n\geq1$. For convenience we extend the natural order on $\NN$ to $\tilde{\NN}$
by defining $0<\varepsilon<1$.

\subsection{WC monomial  quasi-symmetric functions}\label{sec:wcmqsf}

Recall that an $\tilde{\NN}$-composition is a finite sequence of non-zero elements of $\tilde{\NN}$.
The connection between the algebra  $\qsym_{\tilde{N}}$ of quasi-symmetric functions with exponents in $\tilde{\NN}$ and the set $\WC(\NN)$ of weak compositions is established by the map
\begin{align}
\theta: \tilde{\NN}\backslash \{0\}\longrightarrow \NN, n\mapsto \left\{\begin{array}{ll} 0, & n=\vep, \\ n, & \text{otherwise},
\end{array} \right.
\label{eq:theta1}
\end{align}
which induces the natural bijection
\begin{align}
\theta:\mathcal{C}(\tilde{\NN})\longrightarrow  \WC(\NN),
\alpha \mapsto \left\{\begin{array}{ll} \emptyset, & \alpha=\emptyset,\\ (\theta(\alpha_1),\cdots, \theta(\alpha_k)), & \alpha=(\alpha_1,\cdots,\alpha_k)\in\mathcal{C}(\tilde{\NN}), k\geq 1,\end{array}
\right .
\mlabel{eq:theta2}
\end{align}
which induces by $\bfk$-linearity the linear bijection
\begin{align}\label{thetabijcntowc}
\theta: \bfk\, \mathcal{C}(\tilde{\NN})\longrightarrow  \bfk\,\WC(\NN).
\end{align}
So the effect of $\theta$ is replacing all entries of $\alpha=(\alpha_1,\cdots,\alpha_k)\in \mathcal{C}(\tilde{\NN})$ which are $\vep$ by $0$.

Through the bijection $\theta$,
the quasi-shuffle product $*$ on $\bfk\mathcal{C}(\tilde{\NN})$ defines a product on $\bfk\WC(\NN)$, still denoted $*$, by the transport of structures:
\begin{align*}
\alpha*\beta=\theta(\theta^{-1}(\alpha)*\theta^{-1}(\beta)).
\end{align*}
Hence, $\theta$ is an algebra isomorphism from $(\bfk\mathcal{C}(\tilde{\NN}),*)$ to $(\bfk\WC(\NN),*)$.
Combining with Proposition \ref{prop:kcstoqsyms}, we see that $\qsym_{\tilde{N}}$ is isomorphic to $(\bfk\WC(\NN),*)$.

Based on the above arguments, for an $\tilde{\NN}$-composition $\alpha\in \mathcal{C}(\tilde{\NN})$, the monomial basis $M_{\alpha}$ will be called a {\bf weak composition (WC) monomial quasi-symmetric function}. Moreover, we write $\Wcqsym$ for $\qsym_{\tilde{N}}$, and call it the algebra of {\bf weak composition (WC) quasi-symmetric functions}.

For a given $n\in\tilde{\NN}$, let $\Wcqsym_n$ denote the vector space spanned  by the set of all WC monomial quasi-symmetric functions of degree $n$, that is,
$\Wcqsym_n=\bigoplus_{\alpha\in \mathcal{C}(\tilde{\NN},n)}\bfk M_{\alpha}$, where the degree of a basis element $M_{\alpha}$
is given by the weight $|\alpha|$ of $\alpha$.
Then $\Wcqsym=\bigoplus_{n\in\tilde{\NN}}\Wcqsym_n$ is the graded algebra of WC quasi-symmetric functions.
Note that this is an $\tilde{\NN}$-graded algebra. Since $\varepsilon+n=n$ for any $n\geq\varepsilon$, the dimension of each homogeneous
piece of degree larger than $0$ is infinite.

\subsection{WC fundamental  quasi-symmetric functions}
Besides the monomial basis $M_\alpha$ the homogeneous component $\qsym_n$ of the algebra $\qsym$ has a second important basis known as Gessel's {\bf fundamental quasi-symmetric functions}~\cite{Ge}, also indexed by $\NN$-compositions $\alpha=(\alpha_1,\alpha_2,\cdots,\alpha_k)\models n$, which can be expressed by
\begin{align*}
F_{\alpha}=\sum_{\beta\preceq\alpha}M_{\beta}.
\end{align*}

As in the classical case, we will define WC fundamental  quasi-symmetric functions, which will be reduced to the fundamental quasi-symmetric
functions \cite{Ge}, and provide the transformation formula for the WC monomial  and WC fundamental  quasi-symmetric functions.

By definition, each $\tilde{\NN}$-composition $\alpha$  can be expressed uniquely in the form
\begin{align*}
\alpha=(\varepsilon^{i_1},s_1,\varepsilon^{i_2},s_2,\cdots,\varepsilon^{i_k},s_k,\varepsilon^{i_{k+1}}),
\end{align*}
where $i_1,i_2,\cdots,i_{k+1}\in \NN$, $s_1,s_2,\cdots,s_k\in \PP$ and $\varepsilon^i$ means a string of $i$ components of $\varepsilon$ following~\cite{Mac}.
We denote by $\bar{\alpha}$ the $\NN$-composition obtaining from $\alpha$ by omitting its $\varepsilon$ components, that is,
$\bar{\alpha}=(s_1,s_2,\cdots,s_k)$. Let $\ell_{\vep}(\alpha)$ denote the number of entries in $\alpha$ which are equal to $\varepsilon$. So $\ell_{\varepsilon}(\alpha)=i_1+i_2+\cdots+i_{k+1}$.

For any $\NN$-composition $\sigma=(s_1,s_2,\cdots,s_k)$, the fundamental quasi-symmetric function
indexed by $\sigma$ can be written as
\begin{align*}
F_{\sigma}:=\sum_{\substack{1\leq n_1\leq n_2\leq\cdots\leq n_{|\sigma|}\\ \ell\in \text{set}(\sigma) \Rightarrow n_\ell<n_{\ell+1} }} x_{n_1}x_{n_2}\cdots x_{n_{|\sigma|}}.
\end{align*}
More generally, for an $\tilde{\NN}$-composition $\alpha=(\varepsilon^{i_1},s_1,\varepsilon^{i_2},s_2,\cdots,\varepsilon^{i_k},s_k,\varepsilon^{i_{k+1}})$,
where $i_1,i_2,\cdots,i_{k+1}\in\NN$ and $s_1,s_2,\cdots,s_k\in \PP$,
we denote $a_j=i_1+s_1+\cdots+i_j+s_j$, $j=1,2,\cdots,k$, and denote $\text{set}(\alpha)=\{a_1,\cdots,a_k\}$. We then define the {\bf WC fundamental  quasi-symmetric function} indexed by $\alpha$ to be the
formal power series
\begin{align}\label{fundmbasis1}
F_{\alpha}:=\sum_{\substack{n_1\leq n_2\leq \cdots \leq n_{a_{k}+i_{k+1}} \\ \ell\in \text{set}(\alpha) \Rightarrow n_{\ell}<n_{\ell+1}}}
x_{n_1}^\varepsilon\cdots x_{n_{i_1}}^\varepsilon x_{n_{i_1+1}}\cdots x_{n_{a_1}}
\cdots x_{n_{a_{k-1}+1}}^\vep \cdots x_{n_{a_{k-1}+i_{k}}}^\varepsilon x_{n_{a_{k-1}+i_{k}+1}}\cdots x_{n_{a_k}}x_{n_{a_{k}+1}}^\varepsilon\cdots x_{n_{a_k}+i_{k+1}}^\varepsilon.
\end{align}
Evidently, if $i_1=\cdots=i_k=0$, then $\alpha$ is an $\NN$-composition and $F_\alpha$ is a fundamental quasi-symmetric function of $\qsym$.

\subsection{Relationship between $M_\alpha$ and $F_\alpha$}
In this subsection, we generalize the well-known relationship between the monomial and fundamental quasi-symmetric functions, that is, $F_{\alpha}=\sum_{\beta\preceq\alpha}M_{\beta}$, to WC quasi-symmetric functions. First we extend the refining order $\preceq$ on the set $\mathcal{C}(\NN,n)$ of $\NN$-compositions to that of $\tilde{\NN}$-compositions.

Let
$\alpha=(\varepsilon^{i_1},\alpha_1,\cdots,\varepsilon^{i_k},\alpha_k,\varepsilon^{i_{k+1}})$,
$\beta=(\varepsilon^{j_1},\beta_1,\cdots,\varepsilon^{j_k},\beta_k,\varepsilon^{j_{k+1}})$ be two $\tilde{\NN}$-compositions of $n$,
where $i_1,\cdots,i_{k+1}$, $j_1,\cdots,j_{k+1}$ are nonnegative integers such that either $i_{k+1}=j_{k+1}=0$ or $i_{k+1}\geq1$, $j_{k+1}\geq1$, and $\alpha_1,\cdots,\alpha_k$, $\beta_1,\cdots,\beta_k$ are
$\NN$-compositions such that $|\alpha_l|=|\beta_l|$ for $l=1,\cdots,k$.
We extended the order $\preceq$ on $\mathcal{C}(\NN,n)$ to the set $\mathcal{C}(\tilde{\NN},n)$  of all $\tilde{\NN}$-compositions of $n$, still denoted by $\preceq$,
by setting $\alpha\preceq\beta$
if $i_1\leq j_1, \alpha_1\preceq \beta_1, \cdots, i_k\leq j_k, \alpha_k\preceq\beta_k, i_{k+1}\leq j_{k+1}$.
For example, $(1,2,\varepsilon^2,1,3,2,\varepsilon)\preceq(3,\varepsilon^2,1,\varepsilon,5,\varepsilon^3)$, but
$(1,2,\varepsilon^2,1,3,2)$ and $(3,\varepsilon^2,1,\varepsilon,5,\varepsilon^3)$ are not comparable.

\begin{prop}\label{fundamentalbasmonomialbs}
Let $\alpha=(\varepsilon^{i_1},\alpha_1,\cdots,\varepsilon^{i_k},\alpha_k,\varepsilon^{i_{k+1}})$ be an $\tilde\NN$-composition,
where $i_1,\cdots,i_{k+1}$ are nonnegative integers, $\alpha_1,\cdots,\alpha_k$ are $\NN$-compositions,
then
\begin{align}\label{fmalphas}
F_{\alpha}= \sum_{\beta\preceq\alpha}c_{\alpha,\beta} M_\beta
\end{align}
where $c_{\alpha,\beta}=\binom{i_1}{j_1}\cdots
\binom{i_k}{j_k}\binom{i_{k+1}-1}{j_{k+1}-1}$  if
$\beta=(\varepsilon^{j_1},\beta_1,\cdots,\varepsilon^{j_k},\beta_k,\varepsilon^{j_{k+1}})$ such that $\beta\preceq\alpha$, with the convention that $\binom{-1}{-1}=1$.
In particular, $c_{\alpha,\alpha}=1$.
Moreover, when $\alpha$ runs through all $\tilde\NN$-compositions, the elements $F_\alpha$'s, with the notation $F_{\emptyset}=1$, form a
$\ZZ$-basis for $\Wcqsym$.
\end{prop}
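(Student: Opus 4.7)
The plan is to expand the series $F_\alpha$ from Equation~\eqref{fundmbasis1} directly, collect terms by equality patterns of the indices, verify that the resulting coefficients equal $c_{\alpha,\beta}$, and deduce the basis claim from unitriangularity. I will work with the finest decomposition $\alpha = (\varepsilon^{i_1}, s_1, \varepsilon^{i_2}, s_2, \ldots, \varepsilon^{i_k}, s_k, \varepsilon^{i_{k+1}})$ into singleton positive integers $s_j$ (the decomposition used to define $\text{set}(\alpha)$); the grouped form in the statement corresponds to taking each $\alpha_l$ to be the maximal run of consecutive $s_j$'s between two $\varepsilon$-runs. First, I group the monomials in~\eqref{fundmbasis1} by the equality pattern of the sequence $n_1 \leq \cdots \leq n_N$: a maximal run of equal indices collapses via $\tilde\NN$-addition of exponents, producing $x_m^{\varepsilon}$ if the run lies entirely in $\varepsilon$-slots, and otherwise $x_m^b$ where $b$ is the number of $1$-slots in the run (since $\varepsilon + 1 = 1$). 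The strict-increase constraints at the positions of $\text{set}(\alpha)$ prevent runs from crossing the boundary between consecutive brackets $[\varepsilon^{i_l}, s_l]$, so each bracket can be analyzed independently; the resulting monomial is $M_\beta$ for some $\tilde\NN$-composition $\beta$ with $j_l \leq i_l$ and $\beta_l \preceq \alpha_l$, that is, $\beta \preceq \alpha$.

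Next, I count the equality patterns producing each fixed $\beta \preceq \alpha$. The separated internal weak regions of a positive block $\alpha_l$ admit a unique pattern realising the refinement $\beta_l$, contributing a factor of $1$. For $l \leq k$, the weak right boundary of the $l$-th $\varepsilon$-run allows a nonempty suffix of $\varepsilon$-slots to ``straddle'' into the first positions of $s_l$; since $\varepsilon + s = s$, straddling leaves the exponent of the straddled run unchanged. The count therefore splits into $\binom{i_l - 1}{j_l - 1}$ non-straddling patterns (partitions of $i_l$ epsilons into $j_l$ nonempty $\varepsilon$-only runs) plus $\sum_{a \geq 1} \binom{i_l - a - 1}{j_l - 1} = \binom{i_l - 1}{j_l}$ straddling patterns with $a$ absorbed epsilons (by the hockey-stick identity). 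Pascal's identity $\binom{i_l - 1}{j_l - 1} + \binom{i_l - 1}{j_l} = \binom{i_l}{j_l}$ combines these into a factor depending only on $i_l$ and $j_l$, not on the internal shape of $\beta_l$. The terminal $\varepsilon$-run $(l = k+1)$ admits no straddle, contributing $\binom{i_{k+1} - 1}{j_{k+1} - 1}$. Multiplying across all $l$ recovers $c_{\alpha,\beta}$ and establishes~\eqref{fmalphas}.

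Finally, the expansion $F_\alpha = M_\alpha + \sum_{\beta \prec \alpha} c_{\alpha,\beta} M_\beta$ is unitriangular with respect to $\preceq$ relative to the basis $\{M_\alpha\}$ of Lemma~\mref{lem:bmonomialbasis}, and the set $\{\gamma : \gamma \preceq \alpha\}$ is finite for every $\alpha$. Linear independence follows by examining the coefficient of $M_{\alpha_0}$ for a $\preceq$-maximal element $\alpha_0$ in the finite support of a vanishing combination $\sum c_\alpha F_\alpha = 0$; spanning follows by induction along the refinement poset, using $M_\alpha = F_\alpha - \sum_{\beta \prec \alpha} c_{\alpha,\beta} M_\beta$ with the base case $F_\alpha = M_\alpha$ when $\alpha$ has no proper refinement. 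Since all $c_{\alpha,\beta}$ are integers and the transition is unimodular, $\{F_\alpha\}$ is in fact a $\ZZ$-basis of $\Wcqsym$. The main obstacle of the proof is the counting in the second paragraph: separating the straddling from the non-straddling contributions and reassembling them via Pascal's identity into the clean factor $\binom{i_l}{j_l}$, which must turn out to be independent of the internal shape of $\beta_l$ in order for the product formula for $c_{\alpha,\beta}$ to hold.
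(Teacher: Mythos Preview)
Your argument is correct and self-contained. The paper takes a different, shorter route: it invokes \cite[Proposition~4.3]{Ygz2016} (for the analogous left-weak-composition setting, with $0$ replaced by $\varepsilon$) as a black box to obtain the intermediate expansion
\[
F_{\alpha}=\sum_{0\leq j_r\leq i_r}\binom{i_1}{j_1}\cdots\binom{i_k}{j_k}\sum_{\beta_p\preceq\alpha_p}M'_{(\varepsilon^{j_1},\beta_1,\ldots,\varepsilon^{j_k},\beta_k,\varepsilon^{i_{k+1}})},
\]
where $M'$ differs from $M$ only in that the indices in the terminal $\varepsilon$-block are weakly rather than strictly increasing, and then expands $M'$ in terms of $M$ to pick up the final factor $\binom{i_{k+1}-1}{j_{k+1}-1}$. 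Your direct pattern count does all of this in one pass, avoiding the external citation; the trade-off is that you must explicitly separate the straddling and non-straddling contributions and reassemble them via Pascal's identity, whereas the paper hides that work inside the cited result. The basis arguments via unitriangularity are essentially identical.

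One small point of presentation: you announce that you work with the finest decomposition (singleton $s_j$'s) but then in the counting paragraph refer to ``the positive block $\alpha_l$'' and ``the refinement $\beta_l$'', which are the grouped-form objects. The argument goes through either way since the extra factors $\binom{0}{0}=1$ from the trivial $\varepsilon$-runs between adjacent $s_j$'s in the finest form drop out, but you might state explicitly that you are analysing each singleton bracket $[\varepsilon^{i_l},s_l]$ and that the count $\binom{i_l}{j_l}$ is independent of the internal composition $\gamma_l\models s_l$ chosen for the $1$-positions---this is precisely what makes the product formula compatible with the proposition's grouped presentation.
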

\begin{proof}
Notice that restricting the map $\theta$ in Eq.~\eqref{thetabijcntowc} to the set $\tilde{\NN}\backslash\{0\}$, we obtain an isomorphism of additive monoids
from $\tilde{\NN}\backslash\{0\}$ to $\NN$. So following the proof of \cite[Proposition 4.3]{Ygz2016} by substituting $0$ with $\varepsilon$,
one has
\begin{align}\label{falbinommbet}
F_{\alpha}=\sum_{0\leq j_r\leq i_r,\ 1\leq r\leq k}\binom{i_1}{j_1}\cdots\binom{i_k}{j_k}
\sum_{(\alpha_{p1},\cdots,\alpha_{pr_p})\preceq \alpha_p,\ 1\leq p\leq k}\ M'_{(\varepsilon^{j_1},
\alpha_{11},\cdots,\alpha_{1r_1},\cdots,\varepsilon^{j_k},\alpha_{k1},\cdots,\alpha_{kr_k},\varepsilon^{i_{k+1}})}
\end{align}
where
\begin{align*}
M'_{(\varepsilon^{j_1},\alpha_{11},\cdots,\alpha_{1r_1},\cdots,\varepsilon^{j_k},\alpha_{k1},\cdots,\alpha_{kr_k},\varepsilon^{i_{k+1}})}
=\sum x_{n_1}^{\varepsilon}\cdots x_{n_{j_1}}^{\varepsilon}x_{n_{j_1+1}}^{\alpha_{11}}\cdots x_{n_{j_1+r_1}}^{\alpha_{1r_1}}\cdots
x_{n_{j_1+\cdots+j_{k-1}+r_1+\cdots+r_{k-1}+1}}^{\varepsilon}\cdots\\
x_{n_{j_1+\cdots+j_{k}+r_1+\cdots+r_{k-1}}}^{\varepsilon}x_{n_{j_1+\cdots+j_{k}+r_1+\cdots+r_{k-1}+1}}^{\alpha_{k1}}\cdots x_{n_{j_1+\cdots+j_{k}+r_1+\cdots+r_{k}}}^{\alpha_{kr_k}}
x_{n_{j_1+\cdots+j_{k}+r_1+\cdots+r_{k}+1}}^{\varepsilon}\cdots x_{n_{j_1+\cdots+j_{k}+r_1+\cdots+r_{k}+i_{k+1}}}^{\varepsilon}
\end{align*}
and the summation is subject to the condition
$$
n_1<n_2<\cdots<n_{j_1+\cdots+j_{k}+r_1+\cdots+r_{k}} <n_{j_1+\cdots+j_{k}+r_1+\cdots+r_{k}+1}\leq n_{j_1+\cdots+j_{k}+r_1+\cdots+r_{k}+2 } \cdots\leq n_{j_1+\cdots+j_{k}+r_1+\cdots+r_{k}+i_{k+1}}.
$$
By the definition of WC monomial quasi-symmetric functions in Eq.~\eqref{eq:quasimf}, we have
\begin{align*}
M'_{(\varepsilon^{j_1},\alpha_{11},\cdots,\alpha_{1r_1},\cdots,\varepsilon^{j_k},\alpha_{k1},\cdots,\alpha_{kr_k},\varepsilon^{i_{k+1}})}
=\sum_{j_{k+1}=1}^{i_{k+1}}\binom{i_{k+1}-1}{j_{k+1}-1}
M_{(\varepsilon^{j_1},\alpha_{11},\cdots,\alpha_{1r_1},\cdots,\varepsilon^{j_k},\alpha_{k1},\cdots,\alpha_{kr_k},\varepsilon^{j_{k+1}})},
\end{align*}
which together with Eq.~\eqref{falbinommbet} yields Eq.~\eqref{fmalphas}.

According to the definition of $\preceq$, there exist only finitely many $\tilde{\NN}$-compositions less than  a given $\tilde{\NN}$-composition.
It follows from Eq.~\eqref{fmalphas} that
the transition matrix which expresses the $F_{\alpha}$ in terms of the $M_{\beta}$,
with respect to any linear order of $\tilde{\NN}$-compositions compatible with $\preceq$, is an upper triangular with $1$ on the main diagonal.
Hence, it is an invertible matrix since $\QQ$ is contained in $\bfk$, which shows that
$F_{\alpha}$ is a basis for $\Wcqsym$. In fact, it is
a $\ZZ$-basis since the diagonal entries are actually $1$'s, not merely nonzero.
\end{proof} \vspace{3mm}

Next we give the explicit transformation formula
expressing $M_\alpha$ in terms of the $F_\beta$'s.

\begin{prop}\label{fundamentalbastomonomialbs}
Adopt the notation $c_{\alpha,\beta} $ given in Proposition \ref{fundamentalbasmonomialbs}. For any $\tilde{\NN}$-composition $\alpha$, we have
\begin{align}\label{fphasmaleq}
M_\alpha=\sum_{\beta\preceq\alpha} (-1)^{\ell(\beta)-\ell(\alpha)}c_{\alpha,\beta} F_\beta.
\end{align}
\end{prop}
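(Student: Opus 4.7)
The plan is to prove Eq.~\eqref{fphasmaleq} by inverting the upper-triangular transformation Eq.~\eqref{fmalphas} of Proposition~\mref{fundamentalbasmonomialbs}. Since $c_{\alpha,\alpha}=1$, the transition matrix is invertible, and substituting the conjectured expression for $M_\alpha$ back into Eq.~\eqref{fmalphas} and comparing coefficients of $M_\gamma$ reduces the claim to the orthogonality identity
\begin{equation*}
\sum_{\gamma\preceq\beta\preceq\alpha}(-1)^{\ell(\beta)-\ell(\alpha)}c_{\alpha,\beta}\,c_{\beta,\gamma}=\delta_{\alpha,\gamma}
\end{equation*}
for all $\tilde{\NN}$-compositions $\gamma\preceq\alpha$.

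To verify this identity I would exploit the block structure of $\preceq$. Decomposing $\alpha=(\varepsilon^{i_1},\alpha_1,\ldots,\varepsilon^{i_k},\alpha_k,\varepsilon^{i_{k+1}})$ and writing $\beta,\gamma$ analogously with $\varepsilon$-exponents $j_r,\tilde{j}_r$ and interior $\NN$-composition blocks $\beta_r,\gamma_r$, the condition $\gamma\preceq\beta\preceq\alpha$ becomes $\tilde{j}_r\leq j_r\leq i_r$ and $\gamma_r\preceq\beta_r\preceq\alpha_r$ independently for each $r$. Both $c_{\alpha,\beta}\,c_{\beta,\gamma}$ and the exponent
\begin{equation*}
\ell(\beta)-\ell(\alpha)=\sum_{r=1}^{k+1}(j_r-i_r)+\sum_{r=1}^{k}(\ell(\beta_r)-\ell(\alpha_r))
\end{equation*}
factor as products over these blocks, so the orthogonality sum splits into a product of independent sums, one for each $\varepsilon$-block and one for each interior $\NN$-composition block.

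Each $\NN$-composition block contributes $\sum_{\gamma_r\preceq\beta_r\preceq\alpha_r}(-1)^{\ell(\beta_r)-\ell(\alpha_r)}=\delta_{\alpha_r,\gamma_r}$, which is the classical refinement-poset Möbius inversion underlying the standard identity $M_\sigma=\sum_{\tau\preceq\sigma}(-1)^{\ell(\tau)-\ell(\sigma)}F_\tau$ in $\qsym$. Each of the first $k$ $\varepsilon$-blocks contributes
\begin{equation*}
\sum_{j_r=\tilde{j}_r}^{i_r}(-1)^{j_r-i_r}\binom{i_r}{j_r}\binom{j_r}{\tilde{j}_r}=\delta_{i_r,\tilde{j}_r},
\end{equation*}
an instance of the identity obtained by writing $\binom{i_r}{j_r}\binom{j_r}{\tilde{j}_r}=\binom{i_r}{\tilde{j}_r}\binom{i_r-\tilde{j}_r}{j_r-\tilde{j}_r}$ and invoking $(1-1)^{i_r-\tilde{j}_r}=\delta_{i_r,\tilde{j}_r}$. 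The terminal $\varepsilon$-block is handled by the same identity after the shift $i_{k+1}\mapsto i_{k+1}-1$, $j_{k+1}\mapsto j_{k+1}-1$, $\tilde{j}_{k+1}\mapsto \tilde{j}_{k+1}-1$ dictated by the convention $\binom{-1}{-1}=1$, together with the compatibility constraint that $i_{k+1}$, $j_{k+1}$ and $\tilde{j}_{k+1}$ are simultaneously zero or simultaneously positive. Multiplying all these Kronecker deltas yields $\delta_{\alpha,\gamma}$, completing the verification.

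I expect the main obstacle to be purely notational: tracking the sign factor $(-1)^{\ell(\beta)-\ell(\alpha)}$ together with the three sources of length (the $\varepsilon$-entries, the interior $\NN$-composition parts, and the shifted terminal $\varepsilon$-block) consistently across the factorization. Once the block decomposition of $\preceq$ is set up carefully, the identity reduces to a transparent product of elementary binomial identities and the classical $\qsym$ Möbius inversion.
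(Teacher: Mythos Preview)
Your proposal is correct and follows essentially the same route as the paper's proof: both reduce to the orthogonality identity for the coefficients $c_{\alpha,\beta}$, factor it over the $\varepsilon$-blocks and the $\NN$-composition blocks using the block structure of $\preceq$, and then evaluate each factor via the trinomial rewriting $\binom{i}{j}\binom{j}{\tilde j}=\binom{i}{\tilde j}\binom{i-\tilde j}{j-\tilde j}$ together with the classical refinement-poset M\"obius inversion for the $\NN$-composition parts. The only cosmetic differences are that the paper verifies $AB=I$ (comparing coefficients of $F_\gamma$) whereas you verify $BA=I$ (comparing coefficients of $M_\gamma$), and the paper treats all $\NN$-composition blocks simultaneously via $\bar\alpha,\bar\beta,\bar\gamma$ rather than block by block.
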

For instance, if $\alpha=(\varepsilon,2,\varepsilon^3)$, then

\begin{align*}
F_{\alpha}=&M_{(\varepsilon,2,\varepsilon^3)}+2M_{(\varepsilon,2,\varepsilon^2)}+M_{(\varepsilon,2,\varepsilon)}
+M_{(2,\varepsilon^3)}+2M_{(2,\varepsilon^2)}+M_{(2,\varepsilon)}\cr
&+M_{(\varepsilon,1,1,\varepsilon^3)}+2M_{(\varepsilon,1,1,\varepsilon^2)}+M_{(\varepsilon,1,1,\varepsilon)}
+M_{(1,1,\varepsilon^3)}+2M_{(1,1,\varepsilon^2)}+M_{(1,1,\varepsilon)},
\end{align*}
and
\begin{align*}
M_{\alpha}=&F_{(\varepsilon,2,\varepsilon^3)}-2F_{(\varepsilon,2,\varepsilon^2)}+F_{(\varepsilon,2,\varepsilon)}
-F_{(2,\varepsilon^3)}+2F_{(2,\varepsilon^2)}-F_{(2,\varepsilon)}\cr
&-F_{(\varepsilon,1,1,\varepsilon^3)}+2F_{(\varepsilon,1,1,\varepsilon^2)}-F_{(\varepsilon,1,1,\varepsilon)}
+F_{(1,1,\varepsilon^3)}-2F_{(1,1,\varepsilon^2)}+F_{(1,1,\varepsilon)}.
\end{align*}

\begin{proof}
Let $A=(a_{\alpha,\beta})$ be the upper triangular matrix which expresses the $F_{\alpha}$ in terms of the $M_{\beta}$'s,
with respect to any linear order of $\tilde{\NN}$-compositions compatible with $\preceq$. Then, in view of Proposition \ref{fundamentalbasmonomialbs},
for any $\alpha,\beta\in \mathcal{C}(\tilde{\NN},n)$, we have
$a_{\alpha,\beta}=0$ if $\beta\npreceq\alpha$, and
$a_{\alpha,\beta}=c_{\alpha,\beta}$ if $\beta\preceq\alpha$.
So it suffices to show that the inverse matrix of $A$ is $B=(b_{\alpha,\beta})$, where $b_{\alpha,\beta}=0$ if $\beta\npreceq\alpha$, and
$b_{\alpha,\beta}=(-1)^{\ell(\beta)-\ell(\alpha)}c_{\alpha,\beta}$ if $\beta\preceq\alpha$ for all $\alpha,\beta\in \mathcal{C}(\tilde{\NN},n)$.
In other words, we only need to show that for any given $\alpha,\gamma\in \mathcal{C}(\tilde{\NN},n)$ with $\gamma\preceq\alpha$, we have
$\sum_{\gamma\preceq\beta\preceq\alpha}a_{\alpha,\beta}b_{\beta,\gamma}=\delta_{\alpha,\gamma}$, that is,
\begin{align*}
\sum_{\gamma\preceq\beta\preceq\alpha} (-1)^{\ell(\gamma)-\ell{(\beta)}}c_{\alpha,\beta}c_{\beta,\gamma}=\delta_{\alpha,\gamma},
\end{align*}
where $\delta_{\alpha,\gamma}$  denotes the Kronecker delta.

Let $\alpha=(\varepsilon^{i_1},\alpha_1,\cdots,\varepsilon^{i_k},\alpha_k,\varepsilon^{i_{k+1}})$,
$\gamma=(\varepsilon^{j_1},\gamma_1,\cdots,\varepsilon^{j_k},\gamma_k,\varepsilon^{j_{k+1}})$ be the given $\tilde{\NN}$-compositions of $n$ with $\gamma\preceq\alpha$,
then $i_{k+1}=0$ if and only if $j_{k+1}=0$. Let
$\beta=(\varepsilon^{t_1},\beta_1,\cdots,\varepsilon^{t_k},\beta_k,\varepsilon^{t_{k+1}})$
be an $\tilde{\NN}$-composition such that $\gamma\preceq\beta\preceq\alpha$. Then, by the definition of the partial order $\preceq$, we know that
$\gamma\preceq\beta\preceq\alpha$ is equivalent to $\bar{\gamma}\preceq\bar{\beta}\preceq\bar{\alpha}$ and
$j_p\leq t_p\leq i_p$ for $p=1,2,\cdots,k+1$. Thus, we have
\begin{align*}
&\sum_{\gamma\preceq\beta\preceq\alpha} (-1)^{\ell(\gamma)-\ell{(\beta)}}c_{\alpha,\beta}c_{\beta,\gamma}\\
=&(-1)^{\ell(\gamma)}\sum_{\gamma\preceq\beta\preceq\alpha} (-1)^{\ell{(\beta)}}
\binom{i_1}{t_1}\cdots\binom{i_k}{t_k}\binom{i_{k+1}-1}{t_{k+1}-1}
\binom{t_1}{j_1}\cdots\binom{t_k}{j_k}\binom{t_{k+1}-1}{j_{k+1}-1}\\
=&(-1)^{\ell(\gamma)}\left(\sum_{\bar{\gamma}\preceq\bar{\beta}\preceq\bar{\alpha}} (-1)^{\ell{(\bar{\beta})}}\right)
\left(\prod_{p=1}^{k}\sum_{t_p=j_p}^{ i_p}(-1)^{t_p} \binom{i_p}{t_p}\binom{t_p}{j_p}\right)
\left(\sum_{t_{k+1}=j_{k+1}}^{i_{k+1}}(-1)^{t_{k+1}}
\binom{i_{k+1}-1}{t_{k+1}-1}\binom{t_{k+1}-1}{j_{k+1}-1}\right).
\end{align*}
Since $\bar{\gamma}\preceq\bar{\beta}\preceq\bar{\alpha}$ are $\NN$-compositions, we have
$\sum_{\bar{\gamma}\preceq\bar{\beta}\preceq\bar{\alpha}} (-1)^{\ell{(\bar{\beta})}}=(-1)^{\ell{(\bar{\alpha})}}\delta_{\bar{\alpha}\,\bar{\gamma}}$.
For each $p=1,2,\cdots, k$, we have
\begin{align*}
\sum_{t_p=j_p}^{i_p}(-1)^{t_p} \binom{i_p}{t_p}\binom{t_p}{j_p}
=\sum_{t_p=j_p}^{i_p}(-1)^{t_p} \binom{i_p}{j_p}\binom{i_p-j_p}{t_p-j_p}
=(-1)^{j_p} \binom{i_p}{j_p}\sum_{t_p=j_p}^{i_p}(-1)^{t_p-j_p} \binom{i_p-j_p}{t_p-j_p}=(-1)^{i_p}\delta_{i_pj_p}.
\end{align*}
An analogous argument shows that
\begin{align*}
\sum_{t_{k+1}=j_{k+1}}^{i_{k+1}}(-1)^{t_{k+1}}
\binom{i_{k+1}-1}{t_{k+1}-1}\binom{t_{k+1}-1}{j_{k+1}-1}=(-1)^{i_{k+1}}\delta_{i_{k+1}j_{k+1}}.
\end{align*}
Therefore,
\begin{align*}
\sum_{\gamma\preceq\beta\preceq\alpha} (-1)^{\ell(\gamma)-\ell{(\beta)}}c_{\alpha,\beta}c_{\beta,\gamma}
=&(-1)^{\ell(\gamma)}\left((-1)^{\ell{(\bar{\alpha})}}\delta_{\bar{\alpha}\,\bar{\gamma}}\right)
\left(\prod_{p=1}^k(-1)^{i_p}\delta_{i_pj_p}\right)\left((-1)^{i_{k+1}}\delta_{i_{k+1}j_{k+1}}\right)\\
=&(-1)^{\ell(\gamma)+\ell(\alpha)}\delta_{\alpha,\gamma}=\delta_{\alpha,\gamma},
\end{align*}
completing the proof.
\end{proof}

\subsection{The antipode of $\Wcqsym$}

It useful to give a formula for the coefficient of $M_{\beta}$ in $S_{W}(M_{\alpha})$ for $ \alpha^r\leq\beta$. Here we use
the subscript $W$ to indicate that $S_W=S_{\tilde{\NN}}$ is the antipode of $\Wcqsym$.

\begin{lemma}\label{lemcoeffmbainsmalph}
Let $\alpha$ and $\beta$ be $\tilde{\NN}$-compositions such that $\alpha^r\leq \beta$.
\begin{enumerate}
\item\label{lemcoeffmbainsmalpha} There exists a uniquely $\NN$-composition $L\models \ell(\bar{\alpha})$ such that $\bar{\beta}=L\circ\bar{\alpha}^r$;
\item\label{lemcoeffmbainsmalphb} Let $\alpha=(\varepsilon^{i_1},\alpha_1,\cdots,\varepsilon^{i_k},\alpha_k,\varepsilon^{i_{k+1}})$,
$\beta=(\varepsilon^{j_1},\beta_1,\cdots,\varepsilon^{j_p},\beta_p,\varepsilon^{j_{p+1}})$ where $i_1,\cdots,i_k,i_{k+1},j_1,$ $\cdots,$ $j_p, j_{p+1}\in\NN$,
$\alpha_1,\cdots,\alpha_k,\beta_1,\cdots,\beta_p\in\PP$,
and let $L=(l_1,l_2,\cdots,l_p)$ be such that $\bar\beta=L\circ{\bar\alpha}^r$.
Then the coefficient of $M_{\beta}$ in  $S_{W}(M_{\alpha})$ is
\begin{align*}
(-1)^{\ell(\alpha)}\binom{i_{b_1}}{j_1}\binom{i_{b_{2}}+1}{j_{2}+1}\cdots\binom{i_{b_{p}}+1}{j_{p}+1}\binom{i_{b_{p+1}}}{j_{p+1}}
\end{align*}
where $b_t=l_p+l_{p-1}+\cdots+l_{t}+1$ for $1\leq t\leq p$, and $b_{p+1}=1$.
\end{enumerate}
\end{lemma}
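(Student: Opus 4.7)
The plan is to apply the antipode formula of Proposition~\ref{antieqsym} and then reduce the statement to a counting problem about parsings of $\alpha^r$. For part (a), I would exploit the fact that $\bar\alpha^r = (\alpha_k, \alpha_{k-1}, \ldots, \alpha_1)$ and $\bar\beta = (\beta_1, \ldots, \beta_p)$ are ordinary $\NN$-compositions with strictly positive entries, so their partial sums are strictly increasing sequences. If $\bar\beta = L \circ \bar\alpha^r$ with $L = (l_1, \ldots, l_p)$ and $L_s := l_1 + \cdots + l_s$, the equation $\beta_1 + \cdots + \beta_s = \alpha_k + \alpha_{k-1} + \cdots + \alpha_{k-L_s+1}$ uniquely pins down each $L_s$ by strict monotonicity, hence $L$ is unique; existence of $L$ is guaranteed by the hypothesis $\alpha^r \leq \beta$.

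For part (b), Proposition~\ref{antieqsym} gives $S_W(M_\alpha) = (-1)^{\ell(\alpha)} \sum_{J \models \ell(\alpha)} M_{J \circ \alpha^r}$, so the coefficient of $M_\beta$ in $S_W(M_\alpha)$ equals $(-1)^{\ell(\alpha)}$ times the number $N$ of compositions $J \models \ell(\alpha)$ satisfying $J \circ \alpha^r = \beta$. Each such $J$ can be viewed as a way of cutting $\alpha^r$ into consecutive blocks whose sums in $\tilde\NN$ read off $\beta$ in order. Since every $\beta_s \in \PP$ forces its block to contain at least one positive entry of $\alpha^r$, and since any two positive entries lying in the same block pull all intervening $\varepsilon$'s with them, part (a) shows that the partition of the positive entries of $\alpha^r$ into $p$ consecutive runs is forced by $L$. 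All remaining freedom lies in how, for each of the boundary $\varepsilon$-regions of $\alpha^r$ (before the first positive block, between consecutive positive blocks, and after the last positive block), the $\varepsilon$'s are apportioned into (i) trailing $\varepsilon$'s absorbed by the preceding positive block, (ii) $j_s$ standalone $\varepsilon$-only blocks realizing $\varepsilon^{j_s}$ in $\beta$ (each being a nonempty run of $\varepsilon$'s, summing to $\varepsilon$), and (iii) leading $\varepsilon$'s absorbed by the following positive block.

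A stars-and-bars count applied region by region then yields the factors. Recalling that the $\varepsilon$-runs of $\alpha^r$ are $i_{k+1}, i_k, \ldots, i_1$ in order, the leading region has $i_{k+1}=i_{b_1}$ many $\varepsilon$'s and admits only types (ii) and (iii), contributing $\binom{i_{b_1}}{j_1}$; each middle region between positive blocks $s-1$ and $s$ (for $2 \leq s \leq p$) has $i_{k-L_{s-1}+1}=i_{b_s}$ many $\varepsilon$'s and admits all three types, contributing $\binom{i_{b_s}+1}{j_s+1}$; the trailing region has $i_1=i_{b_{p+1}}$ many $\varepsilon$'s and admits only types (i) and (ii), contributing $\binom{i_{b_{p+1}}}{j_{p+1}}$. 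Multiplying these factors and inserting the sign $(-1)^{\ell(\alpha)}$ produces the stated formula. I expect the main obstacle to be the careful bookkeeping of indices under the reversal $\alpha \mapsto \alpha^r$, in particular verifying that $b_s = l_p + l_{p-1} + \cdots + l_s + 1$ indeed corresponds to the $\varepsilon$-run of $\alpha$ that, after reversal, sits between the positive entries forming the end of block $s-1$ and the start of block $s$ in $\bar\alpha^r$.
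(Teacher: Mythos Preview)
Your proposal is correct and follows essentially the same route as the paper: both reduce part~(b) to counting the compositions $J\models\ell(\alpha)$ with $J\circ\alpha^r=\beta$, observe via part~(a) that the grouping of the positive entries of $\alpha^r$ is forced by $L$, and then enumerate the ways to distribute the boundary $\varepsilon$-runs region by region. The paper phrases this last step as replacing stars by plus signs or commas in an intermediate composition $\beta'=(\varepsilon^{i_{b_1}},\beta_1,\ldots,\varepsilon^{i_{b_p}},\beta_p,\varepsilon^{i_{b_{p+1}}})$, which is simply another packaging of your three-type stars-and-bars count.
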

\begin{proof}
\eqref{lemcoeffmbainsmalpha}
Let $\alpha=(\varepsilon^{i_1},\alpha_1,\cdots,\varepsilon^{i_k},\alpha_k,\varepsilon^{i_{k+1}})$ and
$\beta=(\varepsilon^{j_1},\beta_1,\cdots,\varepsilon^{j_p},\beta_p,\varepsilon^{j_{p+1}})$ be $\tilde{\NN}$-compositions satisfying $\alpha^r\leq \beta$,
where $i_1,\cdots,i_k,i_{k+1},j_1,$ $\cdots,$ $j_p, j_{p+1}\in\NN$,
$\alpha_1,\cdots,\alpha_k,\beta_1,\cdots,\beta_p\in\PP$.
Then there exist positive integers $l_1,l_2,\cdots,l_{p-1}$ such that $\beta_1=\alpha_k+\cdots+\alpha_{k-{l_1}+1}$,
$\beta_2=\alpha_{k-{l_1}}+\cdots+\alpha_{k-{l_1}-l_2+1}$, $\cdots$, $\beta_p=\alpha_{k-{l_1}-l_2-\cdots-l_{p-1}}+\cdots+\alpha_{1}$. Take
 $l_p=k-(l_1+l_2+\cdots+l_{p-1})$ and
$L=(l_1,l_2,\cdots,l_p)$. So we have $L\models \ell(\bar{\alpha})$ and
$$\bar{\beta}=(\beta_1,\beta_2,\cdots,\beta_p)=L\circ(\alpha_k,\alpha_{k-1},\cdots,\alpha_1)=L\circ\bar{\alpha}^r.$$
Since $\bar{\beta}$ and $\bar{\alpha}^r$ are $\NN$-composition, the uniqueness of $L$ is obviously.

\eqref{lemcoeffmbainsmalphb}
The idea of the proof is to count the number of ways of obtaining $\beta$ from
$\alpha^r$ such that $\alpha^r\leq\beta$.
Let $b_t=l_p+l_{p-1}+\cdots+l_{t}+1$ where $1\leq t\leq p$, and let $b_{p+1}=1$.
Then  we can obtain the $\tilde\NN$-composition
$$
\beta'=:(\varepsilon^{i_{b_1}},\beta_1,\varepsilon^{i_{b_{2}}},\beta_2,\cdots,\varepsilon^{i_{b_{p}}},\beta_p,\varepsilon^{i_{b_{p+1}}})
$$
in a unique way by summing all the entries of the sub-composition
$$
(\alpha_{b_{t}-1},\varepsilon^{i_{b_{t}-1}},\alpha_{b_{t}-2},\cdots, \varepsilon^{i_{b_{t+1}+1}},\alpha_{b_{t+1}})
$$ of $\alpha^r$ to give the entry $\beta_t$ for $1\leq t\leq p$,
and leaving all other entries of $\alpha$ unchanged. Now we can obtain $\beta$ from $\beta'$ by considering the expression
$$
(\overbrace{\varepsilon \star\cdots\star\varepsilon}^{i_{b_1}\ {\rm factors}}\star\beta_1\star
\overbrace{\varepsilon \star\cdots\star\varepsilon}^{i_{b_{2}}\ {\rm factors}}\star\beta_2\star\cdots\star
\overbrace{\varepsilon \star\cdots\star\varepsilon}^{i_{b_p}\ {\rm factors}}\star\beta_p\star
\overbrace{\varepsilon \star\cdots\star\varepsilon}^{{i_{b_{p+1}}}\ {\rm factors}}),
$$
and replacing each $\star$ by either a plus sign or a comma in
$$
\binom{i_{b_1}}{j_1}\binom{i_{b_{2}}+1}{j_{2}+1}\cdots\binom{i_{b_{p}}+1}{j_{p}+1}\binom{i_{b_{p+1}}}{j_{p+1}}
$$
ways. Then we are done by Proposition \ref{antieqsym}.
\end{proof}

As an example, we present the antipodes
of $M_{(\varepsilon,1,\varepsilon,2)}$ and $M_{\varepsilon^n}$.

\begin{exam}\label{antipexameqsym}
Let $\alpha=(\varepsilon_1,1,\varepsilon_2,2)$. Here subscripts are added to the two occurrences of $\varepsilon$ in $\alpha$ for easy identification. Then
\begin{align*}
S_W(M_{\alpha})=&(-1)^{4}\left[\right.M_{(2,\varepsilon_2,1,\varepsilon_1)}+M_{(2+\varepsilon_2,1,\varepsilon_1)}+M_{(2,\varepsilon_2+1,\varepsilon_1)}
\cr
&+M_{(2,\varepsilon_2,1+\varepsilon_1)}+M_{(2+\varepsilon_2+1,\varepsilon_1)}+M_{(2+\varepsilon_2,1+\varepsilon_1)}+M_{(2,\varepsilon_2+1+\varepsilon_1)}
+M_{(2+\varepsilon_2+1+\varepsilon_1)}\left.\right]\cr
=&M_{(2,\varepsilon,1,\varepsilon)}+2M_{(2,1,\varepsilon)}+M_{(2,\varepsilon,1)}+M_{(3,\varepsilon)}+2M_{(2,1)}+M_{3}.
\end{align*}
For the $\tilde\NN$-composition $\varepsilon^n$ we have
\begin{align*}
S_W(M_{\varepsilon^n})=(-1)^n\sum_{i=0}^{n-1}\binom{n-1}{i}M_{\varepsilon^{i+1}}.
\end{align*}
\end{exam}

\subsection{Surjective Hopf homomorphism from $\Wcqsym$ to $\qsym$}\label{surhom}
In this subsection, we will show that $\qsym$ is a Hopf subalgebra
and a quotient Hopf algebra of $\Wcqsym$.
Let $\mathcal{C}_{\varepsilon}=\{(\varepsilon,\delta)|\delta\in \mathcal{C}(\tilde{\NN})\}$, and let $\mathcal{C}_{N}= \mathcal{C}(\tilde{\NN}) \backslash \mathcal{C}_{\varepsilon}$.
Then $\mathcal{C}_{N}$ consists of the empty composition and all $\tilde\NN$-compositions  whose first entry is a positive integer.
Define a linear map
\begin{align}\label{eqhahvarphi}
\varphi: \Wcqsym \rightarrow \qsym,   M_{\alpha}\mapsto \begin{cases}
(-1)^{\ell_{\varepsilon}(\alpha)}M_{\bar{\alpha}},& \alpha\in \mathcal{C}_{N},\\
0,&\alpha\in \mathcal{C}_{\varepsilon}.
\end{cases}
\end{align}
It is easy to see that the restriction of $\varphi$ to $\qsym$ is the identity map. Thus $\varphi$ is surjective and $\Wcqsym$ is the direct sum of $\Ker\varphi$ and $\qsym$.
The rest of this subsection is devoted to showing that the map $\varphi$ is a Hopf algebra homomorphism and determining the Hopf ideal $\ker \varphi$.

\begin{lemma}\label{lemmaprovealghom}
Let $a$ be a positive integer, and let $\alpha$, $\beta$ be $\tilde{\NN}$-compositions. Then
\begin{align*}
\varphi(M_{(a,\alpha*\beta)})=(-1)^{\ell_{\varepsilon}(\alpha)+\ell_{\varepsilon}(\beta)}M_{(a,\bar{\alpha}*\bar{\beta})}.
\end{align*}
\end{lemma}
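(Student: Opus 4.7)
The plan is to reduce the lemma to a clean identity in $\bfk\mathcal{C}(\NN)$ and prove that identity by induction on $\ell(\alpha)+\ell(\beta)$, using the recursive definition of the quasi-shuffle product in Eq.~\eqref{eq:compoverlapshuprod}.

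First, I would observe that since $a\in\PP$, any term $\gamma$ appearing in $\alpha*\beta$ produces $(a,\gamma)\in\mathcal{C}_{N}$ with $\ell_{\varepsilon}((a,\gamma))=\ell_{\varepsilon}(\gamma)$ and $\overline{(a,\gamma)}=(a,\bar{\gamma})$, so by Eq.~\eqref{eqhahvarphi},
\[
\varphi(M_{(a,\alpha*\beta)})=\sum_{\gamma}\langle\gamma,\alpha*\beta\rangle(-1)^{\ell_{\varepsilon}(\gamma)}M_{(a,\bar{\gamma})}.
\]
Introducing the $\bfk$-linear map $\rho:\bfk\mathcal{C}(\tilde{\NN})\to\bfk\mathcal{C}(\NN)$ given by $\rho(\gamma):=(-1)^{\ell_{\varepsilon}(\gamma)}\bar{\gamma}$, the lemma becomes equivalent to the claim
\[
\rho(\alpha*\beta)=(-1)^{\ell_{\varepsilon}(\alpha)+\ell_{\varepsilon}(\beta)}\,\bar{\alpha}*\bar{\beta}\quad\text{in }\bfk\mathcal{C}(\NN).
\]

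Second, I would prove this identity by induction on $\ell(\alpha)+\ell(\beta)$. The base case $\alpha=\emptyset$ is immediate from $\emptyset*\beta=\beta$ and $\bar{\emptyset}=\emptyset$. For the inductive step, write $\alpha=(c,\alpha')$ and $\beta=(d,\beta')$ and apply the recursion
\[
\alpha*\beta=(c,\alpha'*\beta)+(d,\alpha*\beta')+(c+d,\alpha'*\beta').
\]
The key computational observation is that $\rho$ interacts nicely with prepending: $\rho((n,\delta))=(n,\rho(\delta))$ for $n\in\PP$ while $\rho((\varepsilon,\delta))=-\rho(\delta)$. Applying the inductive hypothesis to each of the three terms and splitting into four cases according to whether each of $c,d$ lies in $\PP$ or equals $\varepsilon$ yields the desired identity.

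The bookkeeping in the four cases is routine except for the case $c=d=\varepsilon$, which is the only step where the distinctive additive law of $\tilde{\NN}$ plays an essential role: because $\varepsilon+\varepsilon=\varepsilon$ (rather than $0$ or $2\varepsilon$), the third term in the recursion contributes $-\rho(\alpha'*\beta')$ with the same prefactor as the first two terms, and the three contributions combine with signs $+,+,-$ to match the factor $(-1)^{\ell_{\varepsilon}(\alpha)+\ell_{\varepsilon}(\beta)}$ on the right. I expect this sign-juggling in the $(\varepsilon,\varepsilon)$ case, together with verifying that the mixed case $c=\varepsilon$, $d\in\PP$ (where $c+d=d$) produces the correct cancellation between the second and third terms, to be the main (though minor) obstacle; no deeper ideas are required beyond the careful use of the additive rules in $\tilde{\NN}$ and the definitions of $\bar{\cdot}$ and $\ell_{\varepsilon}$.
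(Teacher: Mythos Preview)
Your proof is correct and follows essentially the same approach as the paper: induction on $\ell(\alpha)+\ell(\beta)$, expansion via the quasi-shuffle recursion~\eqref{eq:compoverlapshuprod}, and a case analysis on whether the first entries are $\varepsilon$ or positive. The only difference is cosmetic: you first abstract the problem to the identity $\rho(\alpha*\beta)=(-1)^{\ell_{\varepsilon}(\alpha)+\ell_{\varepsilon}(\beta)}\bar{\alpha}*\bar{\beta}$ in $\bfk\mathcal{C}(\NN)$ via the auxiliary map $\rho$, whereas the paper works directly with $\varphi(M_{(a,\cdot)})$ throughout; the resulting case computations are identical.
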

\begin{proof}
We proceed by induction on $\ell(\alpha)+\ell(\beta)\geq0$.
If $\ell{(\alpha)+\ell(\beta)}=0$, then $\alpha$ and $\beta$ are empty and so $\varphi(M_{a})=M_{a}$ is automatic.
For each $n\geq 1$, assume that the desired equation holds for all $\alpha,\beta\in\mathcal{C}(\tilde{\NN})$ with $\ell{(\alpha)+\ell(\beta)}< n$.
Now take any $\alpha,\beta\in\mathcal{C}(\tilde{\NN})$ such that $\ell{(\alpha)+\ell(\beta)}=n$.

Since there is nothing to prove if either $\alpha$ or $\beta$ is empty,
we can assume that $\alpha=(\alpha_1,\delta)$ and $\beta=(\beta_1,\sigma)$ for some  $\alpha_1,\beta_1\in \tilde{\NN}\backslash\{0\}$ and
$\delta,\sigma\in\mathcal{C}(\tilde{\NN})$.
Thus,
\begin{align}\label{eqvarmaquasishualbd}
\varphi(M_{(a,\alpha*\beta)})=\varphi(M_{(a,\alpha_1,\delta*\beta)})+\varphi(M_{(a,\beta_1,\alpha*\sigma)})+\varphi(M_{(a,\alpha_1+\beta_1,\delta*\sigma)}).
\end{align}

If $\alpha_1=\varepsilon$, then the induction hypothesis gives
\begin{align*}
\varphi(M_{(a,\alpha_1,\delta*\beta)})
=-\varphi(M_{(a,\delta*\beta)})
=-(-1)^{\ell_{\varepsilon}(\delta)+\ell_{\varepsilon}(\beta)}M_{(a,\bar{\delta}*\bar{\beta})}
=(-1)^{\ell_{\varepsilon}(\alpha)+\ell_{\varepsilon}(\beta)}M_{(a,\bar{\alpha}*\bar{\beta})}.
\end{align*}
If $\alpha_1\neq\varepsilon$, then, by the induction hypothesis,
\begin{align*}
\varphi(M_{(\alpha_1,\delta*\beta)})=(-1)^{\ell_{\varepsilon}(\delta)+\ell_{\varepsilon}(\beta)}M_{(\alpha_1,\bar{\delta}*\bar{\beta})}
=(-1)^{\ell_{\varepsilon}(\alpha)+\ell_{\varepsilon}(\beta)}M_{(\alpha_1,\bar{\delta}*\bar{\beta})}.
\end{align*}
Since $a$ is a positive integer, it is straightforward to see that
\begin{align*}
\varphi(M_{(a,\alpha_1,\delta*\beta)})
=(-1)^{\ell_{\varepsilon}(\alpha)+\ell_{\varepsilon}(\beta)}M_{(a,\alpha_1,\bar{\delta}*\bar{\beta})}
\end{align*}
holds. To summarize, we have proved
\begin{align}\label{eqvaphmaal1}
\varphi(M_{(a,\alpha_1,\delta*\beta)})=\begin{cases}
(-1)^{\ell_{\varepsilon}(\alpha)+\ell_{\varepsilon}(\beta)}M_{(a,\bar{\alpha}*\bar{\beta})},& \alpha_1=\varepsilon,\\
(-1)^{\ell_{\varepsilon}(\alpha)+\ell_{\varepsilon}(\beta)}M_{(a,\alpha_1,\bar{\delta}*\bar{\beta})},&\alpha_1\neq\varepsilon.
\end{cases}
\end{align}
In an analogous manner we obtain
\begin{align}\label{eqvaphmabe2}
\varphi(M_{(a,\beta_1,\alpha*\sigma)})=\begin{cases}
(-1)^{\ell_{\varepsilon}(\alpha)+\ell_{\varepsilon}(\beta)}M_{(a,\bar{\alpha}*\bar{\beta})},& \beta_1=\varepsilon,\\
(-1)^{\ell_{\varepsilon}(\alpha)+\ell_{\varepsilon}(\beta)}M_{(a,\beta_1,\bar{\alpha}*\bar{\sigma})},&\beta_1\neq\varepsilon
\end{cases}
\end{align}
and
\begin{align}\label{eqvaphmaalbe3}
\varphi(M_{(a,\alpha_1+\beta_1,\delta*\sigma)})
=\begin{cases}
(-1)^{\ell_{\varepsilon}(\alpha)+\ell_{\varepsilon}(\beta)-1}M_{(a,\bar{\alpha}*\bar{\beta})},& \alpha_1=\beta_1=\varepsilon,\\
(-1)^{\ell_{\varepsilon}(\alpha)+\ell_{\varepsilon}(\beta)-1}M_{(a,\beta_1,\bar{\alpha}*\bar{\sigma})},& \alpha_1=\varepsilon, \beta_1\neq\varepsilon, \\
(-1)^{\ell_{\varepsilon}(\alpha)+\ell_{\varepsilon}(\beta)-1}M_{(a,\alpha_1,\bar{\delta}*\bar{\beta})},& \alpha_1\neq\varepsilon, \beta_1=\varepsilon,\\
(-1)^{\ell_{\varepsilon}(\alpha)+\ell_{\varepsilon}(\beta)}M_{(a,\alpha_1+\beta_1,\bar{\delta}*\bar{\sigma})},& \alpha_1\neq\varepsilon, \beta_1\neq\varepsilon.
\end{cases}
\end{align}
Combining Eqs.\eqref{eqvarmaquasishualbd}, \eqref{eqvaphmaal1}, \eqref{eqvaphmabe2} and \eqref{eqvaphmaalbe3}, we have
\begin{align*}
\varphi(M_{(a,\alpha*\beta)})
=\begin{cases}
(-1)^{\ell_{\varepsilon}(\alpha)+\ell_{\varepsilon}(\beta)}
\left(M_{(a,\alpha_1,\bar{\delta}*\bar{\beta})}+M_{(a,\beta_1,\bar{\alpha}*\bar{\sigma})}+M_{(a,\alpha_1+\beta_1,\bar{\delta}*\bar{\sigma})}\right),
& \alpha_1\neq\varepsilon, \beta_1\neq\varepsilon,\\
(-1)^{\ell_{\varepsilon}(\alpha)+\ell_{\varepsilon}(\beta)}M_{(a,\bar{\alpha}*\bar{\beta})},& {\rm otherwise}.
\end{cases}
\end{align*}
Since  $\bar{\alpha}=(\alpha_1,\bar{\delta})$ and $\bar{\beta}=(\beta_1,\bar{\sigma})$ if $\alpha_1\neq\varepsilon$, $\beta_1\neq\varepsilon$, it follows that
\begin{align*}
M_{(a,\bar{\alpha}*\bar{\beta})}
=M_{(a,\alpha_1,\bar{\delta}*\bar{\beta})}+M_{(a,\beta_1,\bar{\alpha}*\bar{\sigma})}+M_{(a,\alpha_1+\beta_1,\bar{\delta}*\bar{\sigma})}
\end{align*}
so that
\begin{align*}
\varphi(M_{(a,\alpha*\beta)})=(-1)^{\ell_{\varepsilon}(\alpha)+\ell_{\varepsilon}(\beta)}M_{(a,\bar{\alpha}*\bar{\beta})},
\end{align*}
as required.
\end{proof}

\begin{lemma}\label{lemmaalghom}
The linear map $\varphi$ is an algebra homomorphism.
\end{lemma}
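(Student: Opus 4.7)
Unit preservation $\varphi(1)=\varphi(M_{\emptyset})=M_{\emptyset}=1$ is immediate since $\emptyset\in\mathcal{C}_N$ and $\ell_\varepsilon(\emptyset)=0$. Since $M_\alpha M_\beta = M_{\alpha*\beta}$ by Proposition~\ref{prop:kcstoqsyms}, it suffices to verify $\varphi(M_{\alpha*\beta})=\varphi(M_\alpha)\varphi(M_\beta)$ for every pair of $\tilde{\NN}$-compositions and extend by linearity. The proof splits according to whether the first entry of each of $\alpha$, $\beta$ is $\varepsilon$ or a positive integer; in every case the key computational input is Lemma~\ref{lemmaprovealghom} applied to the three summands of Eq.~\eqref{eq:compoverlapshuprod}.

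\textbf{Case 1: both $\alpha=(a,\delta)$ and $\beta=(b,\sigma)$ start with positive integers.} Then $a+b\in\PP$ as well, so all three leading entries in
\[
\alpha*\beta = (a,\delta*\beta) + (b,\alpha*\sigma) + (a+b,\delta*\sigma)
\]
are positive. Lemma~\ref{lemmaprovealghom} applies to each summand and, using $\ell_\varepsilon(\alpha)=\ell_\varepsilon(\delta)$, $\bar\alpha=(a,\bar\delta)$ and the analogous identities for $\beta$, produces three monomials with the common sign $(-1)^{\ell_\varepsilon(\alpha)+\ell_\varepsilon(\beta)}$; reassembling via the recursion for the quasi-shuffle of $\bar\alpha$ and $\bar\beta$ gives $(-1)^{\ell_\varepsilon(\alpha)+\ell_\varepsilon(\beta)}M_{\bar\alpha*\bar\beta}=\varphi(M_\alpha)\varphi(M_\beta)$.

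\textbf{Case 2 (the cancellation case): $\alpha\in\mathcal{C}_\varepsilon$, so $\alpha=(\varepsilon,\delta)$ and $\varphi(M_\alpha)=0$.} We must show $\varphi(M_{\alpha*\beta})=0$. If $\beta=\emptyset$ the claim is trivial; if $\beta=(\varepsilon,\sigma)\in\mathcal{C}_\varepsilon$ every summand of $\alpha*\beta$ begins with $\varepsilon+\varepsilon=\varepsilon$ or $\varepsilon$ and hence lies in $\mathcal{C}_\varepsilon$, so the sum is annihilated by $\varphi$. The crucial subcase is $\beta=(b,\sigma)$ with $b\in\PP$, where $\varepsilon+b=b$ and
\[
\alpha*\beta = (\varepsilon,\delta*\beta) + (b,\alpha*\sigma) + (b,\delta*\sigma).
\]
The first summand is killed by $\varphi$, and Lemma~\ref{lemmaprovealghom} applied to the other two, together with $\ell_\varepsilon(\alpha)=1+\ell_\varepsilon(\delta)$ and $\bar\alpha=\bar\delta$, gives
\[
\varphi(M_{(b,\alpha*\sigma)}) = (-1)^{1+\ell_\varepsilon(\delta)+\ell_\varepsilon(\sigma)}M_{(b,\bar\delta*\bar\sigma)}, \qquad \varphi(M_{(b,\delta*\sigma)}) = (-1)^{\ell_\varepsilon(\delta)+\ell_\varepsilon(\sigma)}M_{(b,\bar\delta*\bar\sigma)},
\]
so the two monomials cancel. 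The symmetric case $\beta\in\mathcal{C}_\varepsilon$, $\alpha\notin\mathcal{C}_\varepsilon$ follows from commutativity of the quasi-shuffle product.

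\textbf{Main obstacle.} The only nontrivial step is the cancellation in the subcase $\alpha\in\mathcal{C}_\varepsilon$, $\beta\notin\mathcal{C}_\varepsilon$: the kernel of $\varphi$ is \emph{not} spanned by pure monomials indexed by $\mathcal{C}_\varepsilon$, so killing $\varphi(M_\alpha M_\beta)$ genuinely requires the two surviving contributions to cancel. This cancellation is driven by the parity shift $\ell_\varepsilon((\varepsilon,\delta))=1+\ell_\varepsilon(\delta)$, which flips exactly one of the two otherwise-identical signs produced by Lemma~\ref{lemmaprovealghom}; the rest of the argument is a direct reassembly.
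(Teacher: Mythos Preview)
Your proof is correct and follows essentially the same approach as the paper: a case split on whether the leading entries of $\alpha$ and $\beta$ are $\varepsilon$ or positive, with Lemma~\ref{lemmaprovealghom} supplying the key identity for each summand of the quasi-shuffle recursion. The only cosmetic difference is that in the mixed case the paper fixes $a\neq\varepsilon$, $b=\varepsilon$ while you fix $\alpha\in\mathcal{C}_\varepsilon$, $\beta\notin\mathcal{C}_\varepsilon$; the cancellation argument is identical up to this relabeling.
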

\begin{proof}
As $\varphi$ is a linear map, we only need to prove $\varphi(M_{\alpha}M_{\beta})=\varphi(M_{\alpha})\varphi(M_{\beta})$
for all $\tilde{\NN}$-compositions $\alpha$ and $\beta$. Since the result is immediate if $\alpha$ or $\beta$ is empty, we can write
$\alpha=(a,\delta)$ and $\beta=(b,\sigma)$ for some $a,b\in \tilde{\NN}\backslash\{0\}$, $\delta=(\delta_1,\delta_2,\cdots,\delta_m)\in\mathcal{C}(\tilde{\NN})$ and
$\sigma=(\sigma_1,\sigma_2,\cdots,\sigma_n)\in\mathcal{C}(\tilde{\NN})$ where $m,n\in \NN$.

If $a=b=\varepsilon$, then it is straightforward to see that $\varphi(M_{\alpha}M_{\beta})=0=\varphi(M_{\alpha})\varphi(M_{\beta})$.
If there is exactly one of $a,b$ is $\varepsilon$, then, without loss of generality, we may assume that $a\neq\varepsilon$, $b=\varepsilon$.
Then
\begin{align*}
\varphi(M_{\alpha}M_{\beta})=\varphi(M_{(a,\delta*\beta)}+M_{(b,\alpha*\sigma)}+M_{(a+b,\delta*\sigma)})
=\varphi(M_{(a,\delta*\beta)})+\varphi(M_{(a,\delta*\sigma)}),
\end{align*}
and hence, by Lemma \ref{lemmaprovealghom},
\begin{align*}
\varphi(M_{\alpha}M_{\beta})=&(-1)^{\ell_{\varepsilon}(\delta)+\ell_{\varepsilon}(\beta)}M_{(a,\bar{\delta}*\bar{\beta})}
+(-1)^{\ell_{\varepsilon}(\delta)+\ell_{\varepsilon}(\sigma)}M_{(a,\bar{\delta}*\bar{\sigma})}
=0.
\end{align*}
On the other hand, $\varphi(M_{\alpha})\varphi(M_{\beta})=0$ clearly holds, which means that $\varphi(M_{\alpha}M_{\beta})=\varphi(M_{\alpha})\varphi(M_{\beta})$.

It remains to consider the case when neither $a$ nor $b$ is equal to $\varepsilon$. Since
\begin{align*}
\alpha*\beta=(a,\delta*\beta)+(b,\alpha*\sigma)+(a+b,\delta*\sigma),
\end{align*}
one has
\begin{align*}
\varphi(M_{\alpha}M_{\beta})=\varphi(M_{(a,\delta*\beta)})+\varphi(M_{(b,\alpha*\sigma)})+\varphi(M_{(a+b,\delta*\sigma)}),
\end{align*}
which together with Lemma \ref{lemmaprovealghom} yields
\begin{align*}
\varphi(M_{\alpha}M_{\beta})=&(-1)^{\ell_{\varepsilon}(\delta)+\ell_{\varepsilon}(\beta)}M_{(a,\bar{\delta}*\bar{\beta})})
+(-1)^{\ell_{\varepsilon}(\alpha)+\ell_{\varepsilon}(\sigma)}M_{(b,\bar{\alpha}*\bar{\sigma})}
+(-1)^{\ell_{\varepsilon}(\delta)+\ell_{\varepsilon}(\sigma)}M_{(a+b,\bar{\delta}*\bar{\sigma})}\\
=&(-1)^{\ell_{\varepsilon}(\alpha)+\ell_{\varepsilon}(\beta)}\left(M_{(a,\bar{\delta}*\bar{\beta})}
+M_{(b,\bar{\alpha}*\bar{\sigma})}
+M_{(a+b,\bar{\delta}*\bar{\sigma})}\right).
\end{align*}
The hypothesis on  $a$ and $b$ implies that $\bar{\alpha}=(a,\bar{\delta})$ and $\bar{\beta}=(b,\bar{\sigma})$. Hence
\begin{align*}
\varphi(M_{\alpha}M_{\beta})
=(-1)^{\ell_{\varepsilon}(\alpha)+\ell_{\varepsilon}(\beta)}M_{\bar{\alpha}}M_{\bar{\beta}}
=\varphi(M_{\alpha})\varphi(M_{\beta}),
\end{align*}
which completes the proof.
\end{proof}

\begin{lemma}\label{lemmacoalghometq}
The linear map $\varphi$ is a coalgebra homomorphism.
\end{lemma}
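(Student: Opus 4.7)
The plan is to verify directly that $\varphi$ commutes with the coproduct and the counit on the monomial basis, i.e., that $\epsilon\circ\varphi=\epsilon_W$ and $\Delta\circ\varphi=(\varphi\otimes\varphi)\circ\Delta_W$, where $\Delta_W$, $\epsilon_W$ are the structure maps on $\Wcqsym$ from \eqref{mqsydeltcop1}--\eqref{mqsydeltcop2} and $\Delta,\epsilon$ are their restrictions to $\qsym$. The counit identity is immediate: if $\alpha=\emptyset$ both sides are $1$; if $\alpha\in\mathcal{C}_\varepsilon$ then $\varphi(M_\alpha)=0$ and $\epsilon_W(M_\alpha)=0$; and if $\alpha\in\mathcal{C}_N$ is nonempty then $\bar\alpha\in\mathcal{C}(\NN)$ is nonempty, so $\epsilon(M_{\bar\alpha})=0=\epsilon_W(M_\alpha)$.

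For the coproduct identity, I would split on whether $\alpha\in\mathcal{C}_\varepsilon$. In this case $\varphi(M_\alpha)=0$, so the left-hand side vanishes; on the right-hand side, every summand $\varphi(M_{(\alpha_1,\ldots,\alpha_i)})\otimes\varphi(M_{(\alpha_{i+1},\ldots,\alpha_k)})$ is killed because the $i=0$ term has $\varphi(M_\alpha)=0$ in the second factor, while any term with $i\geq 1$ has a first factor starting with $\alpha_1=\varepsilon$ and so lies in $\ker\varphi$. When $\alpha\in\mathcal{C}_N$ is nonempty, write $\bar\alpha=(s_1,\ldots,s_m)$ and let $1=q_1<q_2<\cdots<q_m$ be the positions of the positive entries of $\alpha=(\alpha_1,\ldots,\alpha_k)$. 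For each split index $i$ with $0\leq i\leq k$, the prefix $(\alpha_1,\ldots,\alpha_i)$ lies in $\mathcal{C}_N$ (it is either empty or begins with $\alpha_1=s_1\in\PP$), so it survives $\varphi$; the suffix $(\alpha_{i+1},\ldots,\alpha_k)$ lies in $\mathcal{C}_N$ precisely when $i=k$ or $i+1=q_{j+1}$ for some $0\leq j\leq m-1$. The surviving splits therefore correspond bijectively to $j=0,1,\ldots,m$, producing exactly the prefix/suffix decomposition of $\bar\alpha$ into $(s_1,\ldots,s_j)$ and $(s_{j+1},\ldots,s_m)$. Since $\ell_\varepsilon$ is additive across concatenation, the sign on each surviving term is $(-1)^{\ell_\varepsilon(\alpha)}$, which matches the prefactor of $\Delta(\varphi(M_\alpha))=(-1)^{\ell_\varepsilon(\alpha)}\Delta(M_{\bar\alpha})$ term by term.

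The main obstacle is purely bookkeeping: one must check that all the nominally extra summands in $\Delta_W(M_\alpha)$ (those where the suffix starts with $\varepsilon$) are killed by $\varphi\otimes\varphi$, and that the remaining ones do not overcount, so that the $m+1$ surviving splits of $\alpha$ correspond exactly to the $m+1$ splits appearing in $\Delta(M_{\bar\alpha})$. The sign issue dissolves once one observes $\ell_\varepsilon(\alpha)=\ell_\varepsilon((\alpha_1,\ldots,\alpha_i))+\ell_\varepsilon((\alpha_{i+1},\ldots,\alpha_k))$ for every $i$, so the single scalar $(-1)^{\ell_\varepsilon(\alpha)}$ factors uniformly out of the surviving sum; no other subtlety arises and the identification with $\Delta(M_{\bar\alpha})$ is forced.
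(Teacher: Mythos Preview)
Your proposal is correct and follows essentially the same approach as the paper's proof: both split into the cases $\alpha\in\mathcal{C}_\varepsilon$ versus $\alpha\in\mathcal{C}_N$, show in the first case that every summand of $(\varphi\otimes\varphi)\Delta_W(M_\alpha)$ vanishes, and in the second case identify the surviving splits (those whose suffix is empty or begins with a positive entry) with the $m+1$ splits of $\bar\alpha$, using additivity of $\ell_\varepsilon$ across concatenation to match the sign. The only difference is cosmetic---the paper parametrizes $\alpha$ via the block form $(\alpha_1,\varepsilon^{i_1},\alpha_2,\varepsilon^{i_2},\ldots,\alpha_n,\varepsilon^{i_n})$ rather than via the positions $q_1<\cdots<q_m$ of positive entries---so the two arguments are the same in substance.
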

\begin{proof}
Take any $\tilde{\NN}$-composition $\alpha=(\alpha_1,\alpha_2,\cdots,\alpha_n)$. If $\alpha_1=\varepsilon$, then, by
Eq.~\eqref{eqhahvarphi}, $\Delta_{\NN} \varphi(M_\alpha)=0$. According to Eq.~\eqref{mqsydeltcop1},
\begin{align*}
(\varphi\otimes \varphi )\Delta_W(M_\alpha)
&=(\varphi\otimes \varphi )\left(\sum_{k=0}^nM_{(\alpha_1,\cdots,\alpha_k)}\otimes M_{(\alpha_{k+1},\cdots,\alpha_n)}\right)
=\sum_{k=0}^n\varphi\left(M_{(\alpha_1,\cdots,\alpha_k)}\right)
\otimes \varphi\left(M_{(\alpha_{k+1},\cdots,\alpha_n)}\right)=0.
\end{align*}
Thus, we have $\Delta_{\NN}\, \varphi(M_\alpha)=(\varphi\otimes \varphi )\Delta_W (M_\alpha)$ for any weak composition $\alpha$ with $\alpha_1=\varepsilon$.

If $\alpha_1\neq\varepsilon$, we may write $\alpha$ as
$(\alpha_1,\varepsilon^{i_1},\alpha_2,\varepsilon^{i_2},\cdots,\alpha_n,\varepsilon^{i_n})$
where $i_j\in\NN$ and $\alpha_j\in\PP$ for all $j=1,\cdots,n$. It follows from Eqs.~\eqref{mqsydeltcop1} and \eqref{eqhahvarphi} that
\begin{align*}
(\varphi\otimes \varphi )\Delta_W (M_\alpha)
&=\sum_{\alpha=\beta\cdot\gamma}\varphi(M_{\beta})\otimes \varphi(M_{\gamma})\\
&=\sum_{j=0}^n\varphi\left(M_{(\alpha_1,\varepsilon^{i_1},\cdots,\alpha_j,\varepsilon^{i_j})}\right)\otimes
\varphi\left(M_{(\alpha_{j+1},\varepsilon^{i_{j+1}},\cdots,\alpha_n,\varepsilon^{i_n})}\right)\\
&=\sum_{j=0}^n(-1)^{i_1+\cdots+i_j}M_{(\alpha_1,\cdots,\alpha_j)}\otimes
(-1)^{i_{j+1}+\cdots+i_n} M_{(\alpha_{j+1},\cdots,\alpha_n)}\\
&=(-1)^{\ell_{\varepsilon}(\alpha)}\sum_{j=0}^nM_{(\alpha_1,\cdots,\alpha_j)}\otimes M_{(\alpha_{j+1},\cdots,\alpha_n)}\\
&=(-1)^{\ell_{\varepsilon}(\alpha)} \Delta_{\NN}\left(M_{\bar{\alpha}}\right)\\
&=\Delta_{\NN}\,\varphi\left(M_{\alpha}\right).
\end{align*}
Hence $\Delta_{\NN}\,\varphi\left(M_{\alpha}\right)=(\varphi\otimes \varphi )\Delta_W (M_\alpha)$ also holds for all weak composition $\alpha$ with
 $\alpha_1\neq\varepsilon$.

Moreover, from Eq.\eqref{mqsydeltcop2} we obtain
$$
\epsilon_{\NN}\,\varphi(M_{\alpha})=\delta_{\alpha,\emptyset}=\epsilon_W(M_{\alpha}).
$$
Thus $\epsilon_{\NN}\,\varphi=\epsilon_W$. The proof is completed.
\end{proof}

\begin{theorem}\label{thmhopfalghomantipode}
The linear map $\varphi:\Wcqsym \rightarrow \qsym$ is a surjective Hopf algebra  homomorphism.
\end{theorem}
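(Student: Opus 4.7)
The plan is to assemble the theorem from pieces already in place rather than compute anything new. By Lemma \ref{lemmaalghom}, $\varphi$ is an algebra homomorphism, and by Lemma \ref{lemmacoalghometq}, $\varphi$ is a coalgebra homomorphism. Together these say that $\varphi$ is a bialgebra homomorphism from $\Wcqsym$ to $\qsym$. So the only remaining points are surjectivity and compatibility with the antipodes.

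For surjectivity, I would simply observe what was already noted right after Eq.~\eqref{eqhahvarphi}: every $\NN$-composition $\alpha$ lies in $\mathcal{C}_N$ with $\bar\alpha=\alpha$ and $\ell_\varepsilon(\alpha)=0$, so $\varphi(M_\alpha)=M_\alpha$. Since the monomial quasi-symmetric functions $\{M_\alpha\mid\alpha\in\mathcal{C}(\NN)\}$ form a $\bfk$-basis of $\qsym$, the map $\varphi$ hits every basis element and is therefore surjective (indeed the restriction of $\varphi$ to $\qsym\subseteq\Wcqsym$ is the identity).

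For compatibility with antipodes, rather than verifying $\varphi\circ S_W=S_\NN\circ\varphi$ by hand on the basis $M_\alpha$, I would invoke the standard fact that any bialgebra homomorphism between two Hopf algebras automatically commutes with the antipodes. This follows from uniqueness of the antipode in the convolution algebra: both $\varphi\circ S_W$ and $S_\NN\circ\varphi$ are convolution inverses of $\varphi$ in $\Hom(\Wcqsym,\qsym)$ (viewed as an algebra under the convolution product built from $\Delta_W$ and the product of $\qsym$), and convolution inverses are unique when they exist. Since $\Wcqsym$ and $\qsym$ are Hopf algebras by Proposition \ref{antieqsym} (with $B=\tilde\NN$ and $B=\NN$ respectively), this gives the compatibility for free.

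There is no real obstacle in this theorem, because all the substantial work — the rather delicate case analysis in Lemma \ref{lemmaprovealghom} feeding into Lemma \ref{lemmaalghom}, the splitting computation in Lemma \ref{lemmacoalghometq}, and the general construction of $S_B$ in Proposition \ref{antieqsym} — has already been done. If one preferred a hands-on proof of antipode compatibility instead of the abstract one, the formulas of Lemma \ref{lemcoeffmbainsmalph} together with Proposition \ref{antieqsym} applied to $\qsym$ would let one match coefficients of $M_\beta$ on both sides, and the signs $(-1)^{\ell_\varepsilon(\cdot)}$ introduced by $\varphi$ would combine with the binomial coefficients $\binom{i_{b_t}+1}{j_t+1}$ to collapse onto the expected $\NN$-composition antipode formula; but in the interest of brevity the convolution-uniqueness argument is preferable.
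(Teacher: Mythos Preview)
Your argument is correct and takes a genuinely different route from the paper for the antipode step. The paper proceeds exactly as you do for surjectivity and for the bialgebra part (citing Lemmas~\ref{lemmaalghom} and~\ref{lemmacoalghometq}), but then \emph{does not} invoke the general convolution-uniqueness fact. Instead it verifies $\varphi\circ S_W=S_\NN\circ\varphi$ directly on basis elements $M_\alpha$, splitting into the two cases $\alpha\in\mathcal{C}_\varepsilon$ and $\alpha\in\mathcal{C}_N$: in the first case a telescoping argument shows both sides vanish, and in the second case the explicit coefficient formula of Lemma~\ref{lemcoeffmbainsmalph} is fed into a binomial identity $\sum_{j=0}^{i}(-1)^j\binom{i+1}{j+1}=1$ to collapse the $\varepsilon$-contributions --- precisely the ``hands-on'' computation you sketch in your final paragraph. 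Your convolution argument is shorter and conceptually cleaner, and requires no further input beyond Proposition~\ref{antieqsym}; the paper's explicit computation, on the other hand, is self-contained for readers who may not have the general Hopf-algebra fact at their fingertips, and it exercises Lemma~\ref{lemcoeffmbainsmalph} in a nontrivial way.
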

\begin{proof}
As noted after the definition of $\varphi$, the map is surjective.
Together with Lemmas \ref{lemmaalghom} and \ref{lemmacoalghometq}, it suffices to show that the antipodes are compatible, that is,
$\varphi(S_W(M_\alpha))=S_{\NN}(\varphi(M_\alpha))$ for all $\tilde{\NN}$-compositions $\alpha$.
We divide our proof in the two cases of $\alpha\in \mathcal{C}_\varepsilon$ and $\alpha\in \mathcal{C}_N$.
If $\alpha\in \mathcal{C}_\varepsilon$, then we may write $\alpha=(\varepsilon,\delta)$ for some $\tilde{\NN}$-composition $\delta$. Then by the definition of $\varphi$, $S_{\NN}(\varphi(M_\alpha))=S_{\NN}(0)=0$. On the other hand, by Proposition \ref{antieqsym},
\begin{align*}
\varphi(S_W(M_\alpha))
=&(-1)^{\ell(\alpha)}\sum_{J\models \ell(\alpha)}\varphi(M_{J\circ (\delta^r,\varepsilon)})\\
=&(-1)^{\ell(\alpha)}\left(\sum_{J\models \ell(\alpha)-1}\varphi(M_{(J\circ \delta^r,\varepsilon)})+\sum_{J\models \ell(\alpha)-1}\varphi (M_{J\circ \delta^r})\right)\\
=&(-1)^{\ell(\alpha)}\sum_{J\models \ell(\alpha)-1}\left(\varphi(M_{(J\circ \delta^r,\varepsilon)})+\varphi (M_{J\circ \delta^r})\right)\\
=&0.
\end{align*}
Then $\varphi(S_W(M_\alpha))=S_{\NN}(\varphi(M_\alpha))$ for all $\alpha\in \mathcal{C}_\varepsilon$.

If $\alpha\in \mathcal{C}_N$, then we may write $\alpha=(\alpha_1,\varepsilon^{i_2},\alpha_2,\cdots,\varepsilon^{i_k},\alpha_k,\varepsilon^{i_{k+1}})$
where $i_j\in\NN$ for $j=2,\cdots,k+1$ and $\alpha_j\in\PP$ for all $j=1,2,\cdots,k$. It follows from Proposition \ref{antieqsym} that
\begin{align}\label{eqsqvarphima}
S_{\NN}(\varphi(M_\alpha))=&S_{\NN}\left((-1)^{\ell_{\varepsilon}(\alpha)}M_{\bar{\alpha}}\right)
=(-1)^{\ell_{\varepsilon}(\alpha)}(-1)^{\ell{(\bar{\alpha}})}\sum_{J\models k}M_{J\circ {\bar{\alpha}^r}}
=(-1)^{\ell(\alpha)}\sum_{\bar{\alpha}^r\leq\delta}M_{\delta}.
\end{align}
On the other hand,
with the notation in Lemma \ref{lemcoeffmbainsmalph}, one has
\begin{align*}
S_W(M_\alpha)
=(-1)^{\ell(\alpha)}\sum_{L\models k}
\binom{i_{b_1}}{j_1}\binom{i_{b_{2}}+1}{j_{2}+1}\cdots\binom{i_{b_{p}}+1}{j_{p}+1}\binom{i_{b_{p+1}}}{j_{p+1}}M_{\beta},
\end{align*}
where $\beta$ is of the form $(\varepsilon^{j_1},\beta_1,\cdots,\varepsilon^{j_p},\beta_p,\varepsilon^{j_{p+1}})$.
Notice that $i_{b_{p+1}}=i_1=0$ so that $j_{p+1}=0$. By Eq.~\eqref{eqhahvarphi}, $\varphi(M_\beta)=0$ if $j_{1}\neq0$, so we have
\begin{equation}\label{eqmbarbeta=mbet}
\begin{split}
\varphi (S_W(M_\alpha))
=&(-1)^{\ell(\alpha)}\sum_{\bar{\beta}=L\circ\bar{\alpha}^r}\binom{i_{b_{2}}+1}{j_{2}+1}\cdots\binom{i_{b_{p}}+1}{j_{p}+1}\varphi(M_{\beta})\\
=&(-1)^{\ell(\alpha)}\sum_{\bar{\alpha}^r\leq \bar{\beta}}(-1)^{\ell_\varepsilon(\beta)}\binom{i_{b_{2}}+1}{j_{2}+1}\cdots\binom{i_{b_{p}}+1}{j_{p}+1} M_{\bar{\beta}}
\end{split}
\end{equation}
Thus, for each $\NN$-composition $\delta\geq \bar{\alpha}^r$,
the coefficient of $M_\delta$ on the right hand side of Eq.\eqref{eqmbarbeta=mbet} is
\begin{align*}
(-1)^{\ell(\alpha)}\sum_{{0\leq j_{t}\leq i_{b_t}},\ {2\leq t\leq p}}
(-1)^{\ell_{\varepsilon}(\beta)}\binom{i_{b_{2}}+1}{j_{2}+1}\cdots\binom{i_{b_{p}}+1}{j_{p}+1}
=&(-1)^{\ell(\alpha)}\prod_{t=2}^p\sum_{j_{t}=0}^{i_{b_t}}(-1)^{j_{t}}\binom{i_{b_t}+1}{j_{t}+1}\\
=&(-1)^{\ell(\alpha)}\prod_{t=2}^p\left(-\sum_{j_t=0}^{i_{b_t}+1}(-1)^{j_t}\binom{i_{b_t}+1}{j_t}+1\right)\\
=&(-1)^{\ell(\alpha)}.
\end{align*}
Therefore, by comparing Eqs.~\eqref{eqsqvarphima} and \eqref{eqmbarbeta=mbet}, $\varphi (S_W(M_\alpha))=S_{\NN}(\varphi(M_\alpha))$ holds for all  $\alpha\in \mathcal{C}_N$.
\end{proof}

We now give a basis for the free $\bfk$-module $\Ker \varphi$ and a Hopf ideal generating set for the ideal $\Ker \varphi$.
Let $B_{\varepsilon}:=\{M_{\alpha}|\alpha\in \mathcal{C}_{\varepsilon}\}$, and let
$B_{N}:=\{M_{\alpha}+(-1)^{\ell_{\varepsilon}(\alpha)+1}M_{\bar{\alpha}}|\alpha\in \mathcal{C}_{N}\backslash\mathcal{C}(\NN)\}$.

\begin{theorem}\label{thm:basisforKervarphi}
The disjoint union $B_{\varepsilon}\uplus B_{N}$
 a basis for the free $\bfk$-module $\Ker\varphi$.
\end{theorem}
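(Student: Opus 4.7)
The plan is to verify, in turn, the three properties that make $B_\varepsilon \uplus B_N$ a basis of $\Ker\varphi$: (i) containment $B_\varepsilon \cup B_N \subseteq \Ker\varphi$; (ii) linear independence; and (iii) spanning of $\Ker\varphi$. The key observation throughout is that $\mathcal{C}(\tilde\NN)$ partitions as the disjoint union of the three classes $\mathcal{C}_\varepsilon$, $\mathcal{C}_N \setminus \mathcal{C}(\NN)$, and $\mathcal{C}(\NN)$, so that the monomial basis $\{M_\gamma : \gamma \in \mathcal{C}(\tilde\NN)\}$ of $\Wcqsym$ splits correspondingly, and this partition is designed precisely to match the structure of the two subsets $B_\varepsilon$ and $B_N$.

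Step (i) is a direct check from the definition \eqref{eqhahvarphi}. For $M_\alpha \in B_\varepsilon$ the statement $\varphi(M_\alpha) = 0$ is immediate. For an element $M_\alpha + (-1)^{\ell_\varepsilon(\alpha)+1} M_{\bar\alpha}$ of $B_N$, since $\bar\alpha \in \mathcal{C}(\NN) \subseteq \mathcal{C}_N$ with $\ell_\varepsilon(\bar\alpha) = 0$, applying $\varphi$ yields $(-1)^{\ell_\varepsilon(\alpha)} M_{\bar\alpha} + (-1)^{\ell_\varepsilon(\alpha)+1} M_{\bar\alpha} = 0$.

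Step (ii) is essentially a triangularity observation. Each generator $M_\alpha$ in $B_\varepsilon$ has $\alpha \in \mathcal{C}_\varepsilon$, and each element of $B_N$ has a distinguished leading occurrence on $M_\alpha$ with $\alpha \in \mathcal{C}_N \setminus \mathcal{C}(\NN)$, plus a correction $M_{\bar\alpha}$ living in $\mathcal{C}(\NN)$; the latter class is never touched by elements of $B_\varepsilon$. Thus, in any putative vanishing linear combination, reading off the coefficients of $M_\gamma$ for $\gamma \in \mathcal{C}_\varepsilon$ and then for $\gamma \in \mathcal{C}_N \setminus \mathcal{C}(\NN)$ forces all scalars to vanish.

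Step (iii) is the heart of the argument. Given $f = \sum_\alpha c_\alpha M_\alpha \in \Ker\varphi$, the plan is to apply $\varphi$ and invoke that $\{M_\beta : \beta \in \mathcal{C}(\NN)\}$ is a basis of $\qsym$: this yields, for every $\beta \in \mathcal{C}(\NN)$, the identity
\[
\sum_{\substack{\alpha \in \mathcal{C}_N \\ \bar\alpha = \beta}} (-1)^{\ell_\varepsilon(\alpha)} c_\alpha = 0.
\]
Isolating the term $\alpha = \beta$ (for which $\ell_\varepsilon(\beta) = 0$) expresses $c_\beta$ as a $\bfk$-linear combination of the $c_\alpha$ with $\alpha \in \mathcal{C}_N \setminus \mathcal{C}(\NN)$ and $\bar\alpha = \beta$. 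Substituting these relations back into $f$ reorganises it exactly into a combination of elements of $B_\varepsilon$ and $B_N$, completing the proof. No individual step presents a serious obstacle; the only real care required is bookkeeping across the three composition classes, which is the reason the partition of $\mathcal{C}(\tilde\NN)$ is built into the statement of the theorem.
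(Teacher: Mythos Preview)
Your proposal is correct and follows essentially the same approach as the paper's own proof: both arguments rest on the partition $\mathcal{C}(\tilde\NN) = \mathcal{C}_\varepsilon \uplus (\mathcal{C}_N \setminus \mathcal{C}(\NN)) \uplus \mathcal{C}(\NN)$, verify containment and linear independence by reading off monomial coefficients against this partition, and handle spanning by applying $\varphi$ to an arbitrary $f \in \Ker\varphi$ to extract the relations $\sum_{\alpha \in \mathcal{C}_N,\ \bar\alpha = \beta} (-1)^{\ell_\varepsilon(\alpha)} c_\alpha = 0$, then reorganising $f$ into elements of $B_\varepsilon$ and $B_N$. The only cosmetic difference is that in step~(iii) you isolate $c_\beta$ and substitute, whereas the paper subtracts the vanishing sum times $M_\beta$ from each grouped piece $h_\beta$; these are equivalent manipulations.
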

\begin{proof}
It is obvious that $B_\varepsilon$ and $B_{N}$ are disjoint sets.
Notice that $\alpha\in\mathcal{C}_{N}\backslash{\mathcal{C}(\NN)}$ implies that $\bar{\alpha}\in \mathcal{C}(\NN)$. So if
\begin{align*}
\sum_{\alpha\in \mathcal{C}_{\varepsilon}}c_{\alpha}M_{\alpha}
+\sum_{\alpha\in\mathcal{C}_{N}\backslash{\mathcal{C}(\NN)}}c_{\alpha}\left(M_{\alpha}+(-1)^{\ell_{\varepsilon}(\alpha)+1}M_{\bar{\alpha}}\right)=0,
\end{align*}
then we must have $c_{\alpha}=0$ for all $\alpha\in \mathcal{C}_\varepsilon\cup \left(\mathcal{C}_N\backslash{\mathcal{C}(\NN)}\right)$,
and therefore $B_{\varepsilon}\uplus B_{N}$ is linearly independent.

By Eq.~\eqref{eqhahvarphi}, $B_\varepsilon\uplus B_{N}$ is contained in $\Ker\varphi$.
Let $f\in \Ker \varphi$. Since $\{M_{\alpha}|\alpha\in \mathcal{C}(\tilde{\NN})\}$ is a basis of $\Wcqsym$, there is a unique linear combination
$$
f=\sum_{\alpha\in \mathcal{C}(\tilde{\NN})}c_{\alpha}M_{\alpha}, \quad c_\alpha\in \bfk.
$$
Noticing that $\mathcal{C}(\tilde{\NN})=\mathcal{C}_{\varepsilon}\uplus \mathcal{C}_{N}$, we have $f=g+h$ where
\begin{align}\label{eq:f=g+hgh}
g=\sum_{\alpha\in \mathcal{C}_{\varepsilon}}c_{\alpha}M_{\alpha}, \quad h=\sum_{\alpha\in \mathcal{C}_{N}}c_{\alpha}M_{\alpha}.
\end{align}
Since $B_{\varepsilon}=\{M_{\alpha}|\alpha\in \mathcal{C}_{\varepsilon}\}\subseteq\Ker\varphi$, we have $g\in \Ker\varphi$ so that  $h\in \Ker \varphi$.
Writing
\begin{align}\label{eq:h-beta=cm}
h_{\beta}=\sum_{\alpha\in \mathcal{C}_{N}\ {\rm with}\ \beta=\bar{\alpha}}c_{\alpha}M_{\alpha}
\end{align}
for each $\beta\in \calc(\NN)$, we conclude that $h=\sum_{\beta\in \calc(\NN)}h_{\beta}$.
Then
\begin{align*}
\sum_{\beta\in \calc(\NN)}\left(\sum_{\alpha\in \mathcal{C}_{N}\ {\rm with}\ \beta=\bar{\alpha}}(-1)^{\ell_{\varepsilon}(\alpha)}c_{\alpha}\right)M_{\beta}
=\sum_{\beta\in \calc(\NN)}\varphi(h_{\beta})=\varphi(h)=0.
\end{align*}
Because $\{M_\beta\,|\,\beta\in\calc(\NN)\}$ is a basis for $\qsym$, one has
$$
\sum_{\alpha\in \mathcal{C}_{N}\ {\rm with}\ \beta=\bar{\alpha}}(-1)^{\ell_{\varepsilon}(\alpha)}c_{\alpha}=0
$$
for each $\beta\in\calc(\NN)$.
Therefore, for each $\beta\in\calc(\NN)$, we have
$$
h_{\beta}=h_{\beta}-\left(\sum_{\alpha\in \mathcal{C}_{N}\ {\rm with}\ \beta=\bar{\alpha}}(-1)^{\ell_{\varepsilon}(\alpha)}c_{\alpha}\right)M_{\beta}\\
=\sum_{\alpha\in \mathcal{C}_{N}\ {\rm with}\ \beta=\bar{\alpha}}c_{\alpha}\left(M_{\alpha}+(-1)^{\ell_{\varepsilon}(\alpha)+1}M_{\beta}\right).
$$
Notice that $M_{\alpha}+(-1)^{\ell_{\varepsilon}(\alpha)+1}M_{\beta}=0$ for all $\NN$-compositions $\alpha\in \mathcal{C}(\NN)$ with $\beta=\bar{\alpha}$,
so
\begin{align}\label{eq:hbeta=cImB-N}
h_\beta=\sum_{\alpha\in \mathcal{C}_{N}\backslash{\mathcal{C}(\NN)}\ {\rm with}\ \beta=\bar{\alpha}}c_{\alpha}\left(M_{\alpha}+(-1)^{\ell_{\varepsilon}(\alpha)+1}M_{\beta}\right).
\end{align}
Combining Eqs.~\eqref{eq:f=g+hgh}, \eqref{eq:h-beta=cm} and \eqref{eq:hbeta=cImB-N}, we obtain
\begin{align*}
f=\sum_{\alpha\in \mathcal{C}_{\varepsilon}}c_{\alpha}M_{\alpha}
+\sum_{\alpha\in\mathcal{C}_{N}\backslash{\mathcal{C}(\NN)}}c_{\alpha}\left(M_{\alpha}+(-1)^{\ell_{\varepsilon}(\alpha)+1}M_{\bar{\alpha}}\right).
\end{align*}
Thus, we can write $f$ as a linear combination of $B_{\varepsilon}\uplus B_{N}$,
completing the proof.
\end{proof}

\begin{lemma}\label{lemma:mr+meprninI}
Let $\alpha=(a,\alpha_1,\cdots,\alpha_r,\varepsilon,\alpha_{r+2},\cdots,\alpha_n)\in \mathcal{C}_{N}$  where $r\in\NN$ and $a,\alpha_1,\cdots,\alpha_r\in\PP$.
If $I$ is the ideal of $\Wcqsym$  generated by $B_{\varepsilon}$, then
\begin{align*}
M_{\alpha}
+M_{(a,\alpha_1,\cdots,\alpha_r,\alpha_{r+2},\cdots,\alpha_n)}\in I.
\end{align*}
\end{lemma}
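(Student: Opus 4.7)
The plan is induction on $r$. Writing $\rho=(\alpha_{r+2},\ldots,\alpha_n)$ and $\alpha'=(a,\alpha_1,\ldots,\alpha_r,\alpha_{r+2},\ldots,\alpha_n)$, the goal is $M_\alpha+M_{\alpha'}\in I$. For the base case $r=0$, since $M_{(\varepsilon,\rho)}\in B_\varepsilon\subseteq I$ the product $M_{(a)}M_{(\varepsilon,\rho)}$ lies in $I$; expanding it via the quasi-shuffle recursion~\eqref{eq:compoverlapshuprod} and using $a+\varepsilon=a$ yields
\[
M_{(a)}M_{(\varepsilon,\rho)}=M_{(a,\varepsilon,\rho)}+M_{(\varepsilon,(a)*\rho)}+M_{(a,\rho)}.
\]
The middle term is a linear combination of $M_\sigma$ with $\sigma\in\mathcal{C}_\varepsilon$, so it lies in $I$, and the base case follows.

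For the inductive step, fix $r\geq 1$ and assume the lemma for all smaller values. Set $\eta=(a,\alpha_1,\ldots,\alpha_r)$, so $M_\eta M_{(\varepsilon,\rho)}\in I$. The difficulty is that the quasi-shuffle $\eta*(\varepsilon,\rho)$ distributes the $\varepsilon$ across many positions, and reading the product directly does not produce the desired identity. The plan is to introduce an interpolating family
\[
(S_k):\qquad M_{(P_{k-1},\,\zeta^{(k)}*(\varepsilon,\rho))}+M_{(P_{k-1},\,\zeta^{(k)}*\rho)}\in I
\]
for $k=1,\ldots,r+1$, where $P_{k-1}=(a,\alpha_1,\ldots,\alpha_{k-1})$ and $\zeta^{(k)}=(\alpha_k,\ldots,\alpha_r)$. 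Statement $(S_1)$ will follow from $M_\eta M_{(\varepsilon,\rho)}\in I$ by expanding $\eta*(\varepsilon,\rho)=(a,\zeta^{(1)}*(\varepsilon,\rho))+(\varepsilon,\eta*\rho)+(a,\zeta^{(1)}*\rho)$ and discarding the middle, $\varepsilon$-prefixed piece. Statement $(S_{r+1})$ has $\zeta^{(r+1)}=\emptyset$, so both quasi-shuffles collapse to concatenations and it reads $M_\alpha+M_{\alpha'}\in I$, which is the desired conclusion.

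The passage $(S_k)\Rightarrow (S_{k+1})$ for $1\leq k\leq r$ is the heart of the argument. Writing $\zeta^{(k)}=(\alpha_k,\zeta^{(k+1)})$ and applying~\eqref{eq:compoverlapshuprod} with $\alpha_k+\varepsilon=\alpha_k$ gives
\[
\zeta^{(k)}*(\varepsilon,\rho)=(\alpha_k,\zeta^{(k+1)}*(\varepsilon,\rho))+(\varepsilon,\zeta^{(k)}*\rho)+(\alpha_k,\zeta^{(k+1)}*\rho).
\]
After prepending $P_{k-1}$, the only contribution not of the shape appearing in $(S_{k+1})$ is $M_{(P_{k-1},\varepsilon,\zeta^{(k)}*\rho)}$. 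For each composition $\tau$ occurring in $\zeta^{(k)}*\rho$, the inductive hypothesis at $r'=k-1<r$, with its ``$\rho$'' parameter specialized to $\tau$, gives $M_{(P_{k-1},\varepsilon,\tau)}+M_{(P_{k-1},\tau)}\in I$; summing over $\tau$ converts the troublesome piece to $-M_{(P_{k-1},\zeta^{(k)}*\rho)}$ modulo $I$. This cancels the companion term already present in $(S_k)$ and leaves precisely $(S_{k+1})$.

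The main obstacle is picking the right interpolating family: neither a single product calculation nor a naive induction on $r$ handles the proliferation of $\varepsilon$ positions produced by the quasi-shuffle of $(\alpha_1,\ldots,\alpha_r)$ with $(\varepsilon,\rho)$. Carrying the ``$\varepsilon$-free'' companion $M_{(P_{k-1},\zeta^{(k)}*\rho)}$ at every stage, so that the inductive hypothesis can annihilate a single $\varepsilon$ sitting immediately to the right of the positive prefix, is the essential manoeuvre that makes the argument close.
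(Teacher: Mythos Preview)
Your proof is correct and follows essentially the same strategy as the paper. Both arguments proceed by induction on $r$, with the same base case computation, and for the inductive step both introduce the identical interpolating family---what you call $(S_k)$ is exactly the paper's Eq.~\eqref{eqlem:mep1tok+malinI}---established by an inner induction on $k$ that starts from the product $M_{(a,\alpha_1,\ldots,\alpha_r)}M_{(\varepsilon,\rho)}\in I$ and reaches the desired conclusion at $k=r+1$; the passage $(S_k)\Rightarrow(S_{k+1})$ via the outer induction hypothesis at $r'=k-1$ matches the paper's use of Eqs.~\eqref{eq:malep6m}--\eqref{eq:ma+mep=sumcm}.
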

\begin{proof}
We prove by induction on $r\geq 0$.
If $r=0$, then $\alpha=(a,\varepsilon,\alpha_2,\cdots,\alpha_n)$.
It follows from $M_{(\varepsilon,\alpha_2,\cdots,\alpha_n)}\in B_\varepsilon\subseteq I$ that
\begin{align*}
M_{\alpha}+M_{(a,\alpha_2,\cdots,\alpha_n)}+M_{(\varepsilon,a*(\alpha_2,\cdots,\alpha_n))}=M_aM_{(\varepsilon,\alpha_2,\cdots,\alpha_n)}\in I,
\end{align*}
which together with $M_{(\varepsilon,a*(\alpha_2,\cdots,\alpha_n))}\in B_\varepsilon\subseteq I$ yields that
the assertion is true.

Now for a given $m\geq 1$, assume that the claim holds for $r$ less than $m$. We prove by induction on
$k$, $1\leq k \leq m+1$, that
\begin{align}\label{eqlem:mep1tok+malinI}
M_{(a,\alpha_1,\cdots,\alpha_{k-1},(\alpha_{k},\cdots,\alpha_m)*(\varepsilon,\alpha_{m+2},\cdots,\alpha_n))}
+M_{(a,\alpha_1,\cdots,\alpha_{k-1},(\alpha_{k},\cdots,\alpha_m)*(\alpha_{m+2},\cdots,\alpha_n))}\in I,
\end{align}
from which the proof follows by setting $k=m+1$.

Since $M_{(\varepsilon,\alpha_{m+2},\cdots,\alpha_n)}\in B_\varepsilon$, we have
\begin{align*}
M_{(a,\alpha_1,\cdots,\alpha_m)} M_{(\varepsilon,\alpha_{m+2},\cdots,\alpha_n)}
=M_{(a,(\alpha_1,\cdots,\alpha_m)*(\varepsilon,\alpha_{m+2},\cdots,\alpha_n))}
+M_{(\varepsilon,(a,\alpha_1,\cdots,\alpha_m)*(\alpha_{m+2},\cdots,\alpha_n))}
+M_{(a,(\alpha_1,\cdots,\alpha_m)*(\alpha_{m+2},\cdots,\alpha_n))}
\end{align*}
is in $I$, which together with $M_{(\varepsilon,(a,\alpha_1,\cdots,\alpha_m)*(\alpha_{m+2},\cdots,\alpha_n))}\in B_\varepsilon\subseteq I$ implies that
\begin{align*}
M_{(a,(\alpha_1,\cdots,\alpha_m)*(\varepsilon,\alpha_{m+2},\cdots,\alpha_n))}
+M_{(a,(\alpha_1,\cdots,\alpha_m)*(\alpha_{m+2},\cdots,\alpha_n))}\in I.
\end{align*}
Thus, Eq.~\eqref{eqlem:mep1tok+malinI} holds for $k=1$.

We next suppose $k>1$  and assume that Eq.~\eqref{eqlem:mep1tok+malinI} holds for $k-1$. Then the induction hypothesis on $k$ gives
\begin{equation}\label{eq:malep6m}
\begin{split}
&M_{(a,\alpha_1,\cdots,\alpha_{k-2},(\alpha_{k-1},\cdots,\alpha_m)*(\varepsilon,\alpha_{m+2},\cdots,\alpha_n))}
+M_{(a,\alpha_1,\cdots,\alpha_{k-2},(\alpha_{k-1},\cdots,\alpha_m)*(\alpha_{m+2},\cdots,\alpha_n))}\\
=&M_{(a,\alpha_1,\cdots,\alpha_{k-1},(\alpha_k,\cdots,\alpha_m)*(\varepsilon,\alpha_{m+2},\cdots,\alpha_n))}
+M_{(a,\alpha_1,\cdots,\alpha_{k-2},\varepsilon,(\alpha_{k-1},\cdots,\alpha_m)*(\alpha_{m+2},\cdots,\alpha_n))}\\
&+M_{(a,\alpha_1,\cdots,\alpha_{k-1},(\alpha_{k},\cdots,\alpha_m)*(\alpha_{m+2},\cdots,\alpha_n))}
+M_{(a,\alpha_1,\cdots,\alpha_{k-2},(\alpha_{k-1},\cdots,\alpha_m)*(\alpha_{m+2},\cdots,\alpha_n))}\in I.
\end{split}
\end{equation}
If we write $(\alpha_{k-1},\cdots,\alpha_m)*(\alpha_{m+2},\cdots,\alpha_n)=\sum_{\delta\in\mathcal{C}(\tilde{\NN})}c_\delta\delta$ with $c_\delta\in \bfk$, then
\begin{equation}\label{eq:ma+mep=sumcm}
\begin{split}
M_{(a,\alpha_1,\cdots,\alpha_{k-2},\varepsilon,(\alpha_{k-1},\cdots,\alpha_m)*(\alpha_{m+2},\cdots,\alpha_n))}
&+M_{(a,\alpha_1,\cdots,\alpha_{k-2},(\alpha_{k-1},\cdots,\alpha_m)*(\alpha_{m+2},\cdots,\alpha_n))}\\
&=\sum_{\delta\in\mathcal{C}(\tilde{\NN})}c_\delta \left(M_{(a,\alpha_1,\cdots,\alpha_{k-2},\varepsilon,\delta)}+M_{(a,\alpha_1,\cdots,\alpha_{k-2},\delta)}\right).
\end{split}
\end{equation}
Notice that $k\leq m+1$, so $k-2\leq m-1$. By the induction hypothesis on $r$, we obtain that $M_{(a,\alpha_1,\cdots,\alpha_{k-2},\varepsilon,\delta)}+M_{(a,\alpha_1,\cdots,\alpha_{k-2},\delta)}\in I$.
Therefore, Eqs.~\eqref{eq:malep6m} and \eqref{eq:ma+mep=sumcm} imply Eq.~\eqref{eqlem:mep1tok+malinI} holds. Take $k=m+1$ gives the desired result, completing the proof.
\end{proof}

\begin{lemma}\label{lemma:ma-1mbiI}
Let $I$ be the ideal of $\Wcqsym$  generated by $B_{\varepsilon}$.
For each $\tilde{\NN}$-composition $\alpha\in \mathcal{C}_{N}$, we have
\begin{align}\label{eqmal-1epmalinI}
M_{\alpha}+(-1)^{\ell_{\varepsilon}(\alpha)+1}M_{\bar{\alpha}}\in I.
\end{align}
\end{lemma}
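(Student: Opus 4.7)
The plan is to proceed by induction on $\ell_{\varepsilon}(\alpha)$, using Lemma~\ref{lemma:mr+meprninI} as the key reduction step that removes one $\varepsilon$ from the composition at the cost of changing the sign.

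The base case $\ell_{\varepsilon}(\alpha)=0$ is immediate: in that case $\alpha=\bar{\alpha}$, so the expression $M_\alpha+(-1)^{\ell_{\varepsilon}(\alpha)+1}M_{\bar\alpha}=M_\alpha-M_{\bar\alpha}=0$ lies trivially in $I$. For the inductive step, suppose $\ell_{\varepsilon}(\alpha)\geq 1$. Because $\alpha\in\mathcal{C}_N$ its first entry is a positive integer, so we may write
\[
\alpha=(a,\alpha_1,\ldots,\alpha_r,\varepsilon,\alpha_{r+2},\ldots,\alpha_n)
\]
where $a,\alpha_1,\ldots,\alpha_r\in\PP$ and the displayed $\varepsilon$ is the \emph{first} occurrence of $\varepsilon$ in $\alpha$. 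Let
\[
\alpha'=(a,\alpha_1,\ldots,\alpha_r,\alpha_{r+2},\ldots,\alpha_n).
\]
Then Lemma~\ref{lemma:mr+meprninI} applies directly to $\alpha$, giving $M_\alpha+M_{\alpha'}\in I$.

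The composition $\alpha'$ still starts with the positive integer $a$, hence $\alpha'\in\mathcal{C}_N$; moreover $\bar{\alpha'}=\bar\alpha$ and $\ell_\varepsilon(\alpha')=\ell_\varepsilon(\alpha)-1$. By the induction hypothesis,
\[
M_{\alpha'}+(-1)^{\ell_\varepsilon(\alpha')+1}M_{\bar\alpha}=M_{\alpha'}+(-1)^{\ell_\varepsilon(\alpha)}M_{\bar\alpha}\in I.
\]
Subtracting this from $M_\alpha+M_{\alpha'}\in I$ yields
\[
M_\alpha-(-1)^{\ell_\varepsilon(\alpha)}M_{\bar\alpha}=M_\alpha+(-1)^{\ell_\varepsilon(\alpha)+1}M_{\bar\alpha}\in I,
\]
which is exactly Eq.~\eqref{eqmal-1epmalinI}.

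There is essentially no serious obstacle once Lemma~\ref{lemma:mr+meprninI} is in hand; the only points to be careful about are that (i) the hypothesis of Lemma~\ref{lemma:mr+meprninI} is met, which is why we single out the \emph{first} $\varepsilon$ in $\alpha$ (so that all entries to its left are positive integers), and (ii) the reduced composition $\alpha'$ remains in $\mathcal{C}_N$ with $\bar{\alpha'}=\bar\alpha$, so that the induction hypothesis applies with the same $\bar\alpha$ and with $\ell_\varepsilon$ strictly decreased.
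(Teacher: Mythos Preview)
Your proof is correct and follows essentially the same approach as the paper's: induction on $\ell_{\varepsilon}(\alpha)$, locating the first occurrence of $\varepsilon$, applying Lemma~\ref{lemma:mr+meprninI} to delete it, and then subtracting the inductive hypothesis applied to the shortened composition. The only differences are notational (you write $\alpha'$ where the paper writes $\sigma$) and your explicit emphasis on why the first $\varepsilon$ must be chosen, which is a helpful clarification.
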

\begin{proof}
The proof is by induction on $\ell_{\varepsilon}(\alpha)$.
If $\ell_{\varepsilon}(\alpha)=0$, then  $\alpha=\bar{\alpha}$, so the assertion is true.
Now assume that Eq.~\eqref{eqmal-1epmalinI} holds for all $\alpha\in \mathcal{C}_{N}$ with $\ell_{\varepsilon}(\alpha)=s-1$ for some $s\in\PP$, and take $\alpha\in \mathcal{C}_{N}$
such that $\ell_{\varepsilon}(\alpha)=s$.

Since $\alpha\in \mathcal{C}_{N}$, we can write $\alpha=(a,\alpha_1,\cdots,\alpha_n)$ for some $a\in\PP$ and $(\alpha_1,\cdots,\alpha_n)\in \mathcal{C}(\tilde{\NN})$ with
$n\in\NN$. It follows from $\ell_{\varepsilon}(\alpha)=s\in\PP$ that there exists a positive integer $r$ such that $\alpha_1,\alpha_2,\cdots,\alpha_r$ are positive integers, and $\alpha_{r+1}$ is equal to $\varepsilon$. Then $\alpha=(a,\alpha_1,\cdots,\alpha_r,\varepsilon,\alpha_{r+2},\cdots,\alpha_n)$.

Let $\sigma=(a,\alpha_1,\cdots,\alpha_r,\alpha_{r+2},\cdots,\alpha_n)$. Then $\bar\alpha=\bar\sigma$ and $M_{\alpha}+M_{\sigma}\in I$
in view of Lemma \ref{lemma:mr+meprninI}.
Notice that  $\ell_{\varepsilon}(\sigma)=\ell_{\varepsilon}{(\alpha)}-1=s-1$, so, by the induction hypothesis,
$$
M_{\sigma}+(-1)^{\ell_{\varepsilon}(\sigma)+1}M_{\bar{\sigma}}\in I.
$$
Thus,
$$
M_{\alpha}+(-1)^{\ell_{\varepsilon}(\alpha)+1}M_{\bar{\alpha}}=\left(M_{\alpha}+M_{\sigma}\right)-\left(M_{\sigma}+(-1)^{\ell_{\varepsilon}(\sigma)+1}M_{\bar{\sigma}}\right)\in I,
$$
as required.
\end{proof}

In view of Theorem \ref{thm:basisforKervarphi} and Lemma \ref{lemma:ma-1mbiI}, we have
\begin{coro}\label{cor:Hopfidealvarphietoq}
The Hopf ideal $\Ker\varphi$ is the ideal generated by the set $B_{\vep}$.
\end{coro}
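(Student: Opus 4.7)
My plan is to prove both inclusions between $\Ker\varphi$ and the ideal $I$ generated by $B_\varepsilon$, using the two preceding results as the main tools. The containment $I \subseteq \Ker\varphi$ is immediate: by definition of $\varphi$ in Eq.~\eqref{eqhahvarphi}, every basis element $M_\alpha$ with $\alpha \in \mathcal{C}_\varepsilon$ lies in $\Ker\varphi$, so $B_\varepsilon \subseteq \Ker\varphi$; since $\Ker\varphi$ is an ideal of $\Wcqsym$ (as $\varphi$ is an algebra homomorphism by Lemma~\ref{lemmaalghom}), the ideal generated by $B_\varepsilon$ is contained in it.

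For the reverse inclusion $\Ker\varphi \subseteq I$, I would invoke Theorem~\ref{thm:basisforKervarphi}, which asserts that $B_\varepsilon \uplus B_N$ is a $\bfk$-basis for $\Ker\varphi$. Hence any $f \in \Ker\varphi$ is a finite $\bfk$-linear combination of elements from $B_\varepsilon$ and $B_N$. Elements of $B_\varepsilon$ lie in $I$ trivially, while elements of $B_N$ are of the form $M_\alpha + (-1)^{\ell_\varepsilon(\alpha)+1}M_{\bar\alpha}$ for $\alpha \in \mathcal{C}_N \setminus \mathcal{C}(\NN)$, and these lie in $I$ by Lemma~\ref{lemma:ma-1mbiI}. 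Combining the two, $f \in I$, so $\Ker\varphi \subseteq I$, and thus $\Ker\varphi = I$.

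Finally, to conclude that $\Ker\varphi$ is a \emph{Hopf} ideal generated by $B_\varepsilon$, I would note that Theorem~\ref{thmhopfalghomantipode} says $\varphi$ is a surjective Hopf algebra homomorphism, so $\Ker\varphi$ is automatically a Hopf ideal (closed under the antipode and satisfying $\Delta_W(\Ker\varphi) \subseteq \Ker\varphi \otimes \Wcqsym + \Wcqsym \otimes \Ker\varphi$, with $\epsilon_W(\Ker\varphi)=0$). Since there is essentially no real obstacle here given the preparatory lemmas, the proof reduces to citing Theorem~\ref{thm:basisforKervarphi} and Lemma~\ref{lemma:ma-1mbiI} and observing the two inclusions; the bulk of the conceptual work was already carried out in proving those two statements (particularly the nontrivial combinatorial induction in Lemmas~\ref{lemma:mr+meprninI} and~\ref{lemma:ma-1mbiI}). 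The only care needed is the observation that being a Hopf ideal is an automatic consequence of being the kernel of a Hopf algebra morphism, which requires no additional computation.
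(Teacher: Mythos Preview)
Your proposal is correct and follows exactly the approach the paper intends: the paper simply states the corollary as an immediate consequence of Theorem~\ref{thm:basisforKervarphi} and Lemma~\ref{lemma:ma-1mbiI} without giving any further argument, and your write-up just makes explicit the two inclusions and the fact that $\Ker\varphi$ is automatically a Hopf ideal as the kernel of a Hopf algebra morphism.
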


\section{The connection between Rota-Baxter algebras and \qsym}
\mlabel{sec:CBRBAQSYM}

We are going to establish the connection between the free Rota-Baxter algebras and the algebras of quasi-symmetric functions.
Firstly, we briefly recall the construction of a free Rota-Baxter algebra by the mixable shuffle product.
See \cite{Gub,G-K1} for more details.

\begin{defn}\label{RBAlambda}
For a fixed $\lambda\in \bfk$, a {\bf Rota-Baxter $\mathbf{k}$-algebra (RBA)
of weight $\lambda$} is a commutative $\mathbf{k}$-algebra $R$ together with a $\mathbf{k}$-linear
operator $P: R\rightarrow R$ that satisfies the Rota-Baxter identity
\begin{equation}\label{rtequ}
P(x)P(y)=P(xP(y))+P(P(x)y)+\lambda P(xy)\qquad \forall x,y\in R.
\end{equation}
Such an operator $P$ is called a {\bf Rota-Baxter operator (RBO) of weight $\lambda$}. If $R$ is only assumed to be a non-unitary
$\mathbf{k}$-algebra, we call $R$ a non-unitary Rota-Baxter $\mathbf{k}$-algebra of weight $\lambda$.
\end{defn}

A morphism $f: (R, P)\rightarrow (S, Q)$ of Rota-Baxter $\mathbf k$-algebras is a $\mathbf k$-algebra homomorphism $f: R\rightarrow S$ such that
$f(P(a)) = Q(f(a))$ for all $a\in R$.
Given a commutative $\mathbf {k}$-algebra $A$ that is not necessarily unitary,
the free commutative Rota-Baxter $\mathbf {k}$-algebra on $A$ is defined to be a \rbto
$\mathbf {k}$-algebra $(F(A),P_A)$ together with a $\mathbf{k}$-algebra homomorphism
$j_A: A\rightarrow F(A)$ with the property that, for any \rbto $\mathbf{k}$-algebra $(R, P)$ and any
$\mathbf{k}$-algebra homomorphism $f: A\rightarrow R$, there is a unique Rota-Baxter $\mathbf{k}$-algebra homomorphism
$\tilde{f}: (F(A),P_A)\rightarrow (R, P)$ such that $f=\tilde{f}\circ j_A$ as $\mathbf{k}$-algebra homomorphisms.

We recall the free commutative Rota-Baxter $\mathbf {k}$-algebra on $A$ constructed in~\cite{G-K1}  by using the mixable shuffle algebra.

Let $A$ be a commutative $\bfk$-algebra that is not necessarily unitary. For a given $\lambda\in\bfk$, the {\bf mixable shuffle algebra of weight $\lambda$
generated by $A$} is the $\bfk$-module
\begin{align*}
\sha_{\bfk}^{+}(A):=\sha_{\bfk,\lambda}^{+}(A)=\bigoplus_{k\geq 0} A^{\otimes k}=\mathbf{k}\oplus A\oplus A^{\otimes2}\oplus\cdots,
\quad \text{ where\ } A^{\otimes k}= \underbrace{A\otimes A\otimes\cdots \otimes A}_{ k-{\text factors}},
\end{align*}
equipped  with the mixable  shuffle product $*$ defined as follows.

For pure tensors $\fraka =a_1\otimes \cdots \otimes a_m\in A^{\otimes m}$ and $\frakb =b_1\otimes \cdots \otimes b_n\in A^{\otimes n}$,
a {\bf shuffle} of $\fraka $ and $\frakb $ is a tensor
list of $a_i$ and $b_j$ without change the natural orders of the $a_i$'s and the $b_j$'s.  More generally, for the given $\lambda\in\bfk$,
a {\bf mixable shuffle} (of weight $\lambda$) of $\fraka$ and $\frakb$ is a shuffle of $\fraka$ and $\frakb$ where some  of the pairs $a_i\ot b_j$ are replaced by $\lambda a_ib_j$. The {\bf mixable shuffle product} $\fraka *_{\lambda}\frakb$ of $\fraka$ and $\frakb$ is the sum of all mixable shuffles.
For example
$$ a_1 *_{\lambda}(b_1\ot b_2)=a_1\ot b_1\ot b_2 + b_1\ot a_1\ot b_2+b_1\ot b_2\ot a_1+\lambda(a_1b_1)\ot b_2+b_1\ot \lambda(a_1b_2),$$
where $\lambda(a_1b_1)\ot b_2$ comes from $a_1\ot b_1\ot b_2$ by ``mixing" or merging $a_1\ot b_1$ and $b_1\ot \lambda(a_1b_2)$ comes from $b_1\ot a_1\ot b_2$ by ``mixing" or merging $a_1\ot b_2$. The last shuffle $b_1\ot b_2\ot a_1$ does not yield any mixed term since $a_1$ is not before any $b_j$, $j=1, 2$.

The mixable shuffle product can be also defined by the quasi-shuffle product given by the recursion~\cite{EG2006,Ho}
\begin{equation}\label{mixableshuprod}
\fraka* \frakb =a_1\otimes((a_2\otimes \cdots \otimes a_m) * \frakb)+b_1\otimes(\fraka *  (b_2\otimes \cdots \otimes b_n))
+\lambda(a_1b_1)\otimes((a_2\otimes \cdots \otimes a_m)*  (b_2\otimes \cdots \otimes b_n))
\end{equation}
with the convention that $1*  \fraka =\fraka *  1=\fraka $.
The mixable shuffle product equipped $\sha_{\bfk}^{+}(A)$ with a commutative algebra structure \cite{G-K1}.

Let $A$ be a commutative unitary $\bfk$-algebra. Define
\begin{align*}
\sha_{\bfk,\lambda}(A):=A\otimes \sha_{\bfk}^{+}(A)=\bigoplus_{k\geq1} A^{\otimes k}
\end{align*}
to be the tensor product of the algebras $A$ and $\sha_{\bfk}^{+}(A)$. For notational convenience we will write $a\otimes \mathbf{1}_{\bfk}=a$ for $a\in A$.
For two pure tensors $a_0\otimes \fraka =a_0\otimes a_1\otimes \cdots \otimes a_m$
 and $b_0\otimes \frakb =b_0\otimes b_1\otimes \cdots \otimes b_n$, the {\bf augmented mixable shuffle product}
 $\diamond_{\lambda}$ on $\sha_{\bfk,\lambda}(A)$ is defined by
\begin{align}\label{defdiamond}
(a_0\otimes \fraka )\diamond_{\lambda} (b_0\otimes \frakb ):=\begin{cases}
a_0b_0,&m=n=0,\\
(a_0b_0)\otimes \fraka, & m>0,n=0, \\
(a_0b_0)\otimes \frakb, & m=0,n>0,\\
(a_0b_0)\otimes(\fraka \ast   \frakb ),&m>0,n>0.
\end{cases}
\end{align}
Thus, we have the algebra isomorphism (embedding of the second tensor factor)
\begin{align*}
\eta:(\sha_{\bfk}^{+}(A),*)\rightarrow (\mathbf{1}_A\otimes \sha_{\bfk}^{+}(A), \diamond_{\lambda})
\end{align*}
The pair of products $*$ and $\diamond_{\lambda}$ is a special case of the double products~\cite{Gub} in Rota-Baxter algebras.

The following theorem was established in \cite{G-K1}.
\begin{theorem}\label{freerbamsp}
The algebra $(\sha_{\bfk,\lambda}(A),\diamond_\lambda)$, with the linear
operator $P_A :\sha_{\bfk}(A)\rightarrow \sha_{\bfk}(A)$ sending $\fraka$ to $1\otimes \fraka$, is a free commutative \rbto algebra
of weight $\lambda$ generated by $A$.
\end{theorem}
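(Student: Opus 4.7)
The plan is to verify the three defining properties in the universal characterization: (i) $(\sha_{\bfk,\lambda}(A),\diamond_\lambda)$ is a commutative $\bfk$-algebra, (ii) $P_A$ is a Rota-Baxter operator of weight $\lambda$, and (iii) the pair $(\sha_{\bfk,\lambda}(A),P_A)$ together with the embedding $j_A\colon A\to\sha_{\bfk,\lambda}(A)$, $a\mapsto a$, satisfies the universal property.

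For (i), I would first prove that the mixable shuffle product $\ast$ on $\sha_{\bfk}^{+}(A)$ defined by the recursion \eqref{mixableshuprod} is associative and commutative. Commutativity is straightforward from the symmetry of the recursion. Associativity is the usual inductive argument on $m+n+p$ for three pure tensors $\fraka,\frakb,\frakc$: expanding $(\fraka\ast\frakb)\ast\frakc$ and $\fraka\ast(\frakb\ast\frakc)$ via the first recursive step produces, in each case, the same nine-term sum (three ``head-of-$\fraka$'', three ``head-of-$\frakb$'', three ``head-of-$\frakc$'' contributions, each augmented by the appropriate $\lambda$-merged pieces). Then the augmented product $\diamond_\lambda$ inherits associativity and commutativity from $\ast$ via the definition~\eqref{defdiamond}, where the leading factor lives in the (unitary, commutative) algebra $A$.

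For (ii), one checks directly from \eqref{defdiamond} that for $\fraka,\frakb\in\sha_{\bfk}^{+}(A)$,
\begin{align*}
P_A(\fraka)\diamond_\lambda P_A(\frakb)
&=(1\otimes\fraka)\diamond_\lambda(1\otimes\frakb)
 =1\otimes\big((1\otimes\fraka)\ast(1\otimes\frakb)\big)\\
&=1\otimes\Big(1\otimes(\fraka\ast(1\otimes\frakb))+1\otimes((1\otimes\fraka)\ast\frakb)+\lambda\,1\otimes(\fraka\ast\frakb)\Big),
\end{align*}
which is exactly $P_A(\fraka\diamond_\lambda P_A(\frakb))+P_A(P_A(\fraka)\diamond_\lambda\frakb)+\lambda P_A(\fraka\diamond_\lambda\frakb)$. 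Thus the Rota-Baxter identity of weight $\lambda$ is simply the first step of the recursion \eqref{mixableshuprod} read from the right.

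For (iii), given a Rota-Baxter algebra $(R,P)$ of weight $\lambda$ and a $\bfk$-algebra homomorphism $f\colon A\to R$, I would define $\tilde{f}\colon\sha_{\bfk,\lambda}(A)\to R$ on pure tensors by the nested formula
\begin{equation*}
\tilde{f}(a_0\otimes a_1\otimes\cdots\otimes a_n)
=f(a_0)\,P\!\big(f(a_1)\,P\big(f(a_2)\,P(\cdots P(f(a_n))\cdots)\big)\big),
\end{equation*}
and extend by $\bfk$-linearity. Uniqueness is immediate: any Rota-Baxter homomorphism extending $f$ must send $a_0\otimes a_1\otimes\cdots\otimes a_n=a_0\diamond_\lambda P_A(a_1\diamond_\lambda P_A(a_2\diamond_\lambda\cdots))$ to precisely this expression. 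Compatibility with the operators, $\tilde{f}\circ P_A=P\circ\tilde{f}$, is built into the definition. The nontrivial point is multiplicativity: $\tilde{f}(u\diamond_\lambda v)=\tilde{f}(u)\tilde{f}(v)$ for all pure tensors $u,v$. I expect this to be the \emph{main obstacle}, and I would prove it by double induction on the pair $(m,n)$ of tensor lengths. The inductive step proceeds by applying \eqref{defdiamond} and \eqref{mixableshuprod} on the left-hand side to split $u\diamond_\lambda v$ into three pieces according to the first factor, and applying the Rota-Baxter identity \eqref{rtequ} for $P$ on the right-hand side to produce the same three pieces; the induction hypothesis handles the lower-length terms. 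The base case $m=0$ or $n=0$ is the statement that $\tilde{f}$ is a left/right $A$-module map, which is immediate. This simultaneous matching of the mixable shuffle recursion with the Rota-Baxter identity is precisely the content of the theorem and completes the verification of the universal property.
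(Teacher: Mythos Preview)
The paper does not supply its own proof of this theorem; it simply quotes the result from \cite{G-K1}. Your outline is exactly the standard argument given there: verify that $(\sha_{\bfk}^{+}(A),\ast)$ is a commutative associative algebra, that $P_A$ satisfies the Rota-Baxter identity because that identity is literally the first step of the recursion \eqref{mixableshuprod}, and then check the universal property via the nested-$P$ formula with multiplicativity proved by induction on tensor length. So there is no alternative approach to compare against, and your plan is the right one.

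One notational slip to clean up in part (ii): you take $\fraka,\frakb\in\sha_{\bfk}^{+}(A)$, but $P_A$ is defined on $\sha_{\bfk,\lambda}(A)$, and your second equality
\[
(1\otimes\fraka)\diamond_\lambda(1\otimes\frakb)=1\otimes\big((1\otimes\fraka)\ast(1\otimes\frakb)\big)
\]
is not what \eqref{defdiamond} says. With $\fraka=a_0\otimes\fraka'$ and $\frakb=b_0\otimes\frakb'$ in $\sha_{\bfk,\lambda}(A)$, the definition gives $(1\otimes\fraka)\diamond_\lambda(1\otimes\frakb)=1\otimes(\fraka\ast\frakb)$; expanding $\fraka\ast\frakb$ by \eqref{mixableshuprod} then produces the three terms $1\otimes a_0\otimes(\fraka'\ast\frakb)$, $1\otimes b_0\otimes(\fraka\ast\frakb')$, $\lambda\,1\otimes(a_0b_0)\otimes(\fraka'\ast\frakb')$, which are indeed $P_A(\fraka\diamond_\lambda P_A(\frakb))$, $P_A(P_A(\fraka)\diamond_\lambda\frakb)$, and $\lambda P_A(\fraka\diamond_\lambda\frakb)$. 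The conclusion you state is correct; only the intermediate bookkeeping needs adjusting.
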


For the rest of the paper, we will assume that $\lambda=\mathbf{1}_{\bfk}$ and drop $\lambda$ from the notation.

The free commutative Rota-Baxter algebra on one generator $x$ is the same as the free commutative Rota-Baxter algebra on
the polynomial algebra $\bfk[x]$.
So the special case of Theorem \ref{freerbamsp} in this case can be restated as
\begin{theorem}\label{rbaonegenx}
The $\bfk$-module
\begin{align}\label{shaklambdax11}
\sha(x):=\sha_{\bfk}\left(\bfk[x]\right)=\bfk[x]\otimes \sha_{\bfk}^{+}(\bfk[x])=\bigoplus_{k\geq 1}  \bfk[x]^{\otimes k},
\end{align}
with the product in Eq.~\eqref{defdiamond} and the operator $P_{\bfk[x]}: \fraka\mapsto 1\ot \fraka$, is the free unitary Rota--Baxter algebra on $x$.
\end{theorem}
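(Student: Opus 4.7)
The plan is to deduce Theorem \ref{rbaonegenx} as a direct specialization of the already-stated Theorem \ref{freerbamsp} from \cite{G-K1}, composed with the universal property of the polynomial algebra. Specifically, take $A=\bfk[x]$ and $\lambda=\mathbf{1}_\bfk$ in Theorem \ref{freerbamsp}. Then the underlying $\bfk$-module of $\sha(x)$ in Eq.~\eqref{shaklambdax11} coincides with $\sha_{\bfk}(\bfk[x])=\bfk[x]\otimes\sha_{\bfk}^+(\bfk[x])$, the product $\diamond$ in Eq.~\eqref{defdiamond} agrees with $\diamond_\lambda$ for $\lambda=\mathbf{1}_\bfk$, and $P_{\bfk[x]}$ is the prescribed Rota-Baxter operator. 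Consequently, by Theorem \ref{freerbamsp}, the triple $(\sha(x),\diamond,P_{\bfk[x]})$ is the free commutative unitary Rota-Baxter $\bfk$-algebra on the algebra $\bfk[x]$, with structure map $j_{\bfk[x]}\colon \bfk[x]\to\sha(x)$ given by the inclusion of the first tensor factor.

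What remains is to upgrade this universal property with respect to algebra homomorphisms from $\bfk[x]$ to the universal property with respect to the single generator $x$. The key step is to invoke the universal property of the polynomial algebra $\bfk[x]$: for any commutative unitary $\bfk$-algebra $R$ and any element $r\in R$, there exists a unique $\bfk$-algebra homomorphism $f_r\colon \bfk[x]\to R$ with $f_r(x)=r$. Composing with the universal property of $(\sha(x),P_{\bfk[x]})$ just established, we get, for every commutative unitary Rota-Baxter algebra $(R,P)$ and every $r\in R$, a unique Rota-Baxter algebra homomorphism $\tilde f_r\colon (\sha(x),P_{\bfk[x]})\to (R,P)$ such that $\tilde f_r(x)=r$, where $x$ is identified with its image in $\sha(x)$ under $j_{\bfk[x]}$.

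For uniqueness, one observes that any Rota-Baxter homomorphism $g\colon (\sha(x),P_{\bfk[x]})\to (R,P)$ with $g(x)=r$ must restrict to the unique algebra homomorphism $f_r$ on the subalgebra $\bfk[x]\subseteq\sha(x)$, and hence must coincide with $\tilde f_r$ by the uniqueness clause in Theorem \ref{freerbamsp}. This gives the desired freeness of $(\sha(x),P_{\bfk[x]})$ on the single generator $x$.

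There is essentially no genuine obstacle here, since the bulk of the work, namely the construction of the augmented mixable shuffle product, the verification of associativity and commutativity, and the verification of the Rota-Baxter identity together with the universal property, is absorbed into the cited Theorem \ref{freerbamsp}. The only thing one must be careful about is correctly tracking the identification of $x$ across the inclusions $\bfk\hookrightarrow\bfk[x]\hookrightarrow\sha(x)$ and the convention $a\otimes\mathbf{1}_\bfk=a$ adopted before Eq.~\eqref{defdiamond}, so that the element $x\in\sha(x)$ mapped to $r$ is unambiguous.
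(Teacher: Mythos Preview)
Your proposal is correct and follows exactly the paper's approach: the paper simply remarks that the free commutative Rota-Baxter algebra on the generator $x$ is the same as the free commutative Rota-Baxter algebra on the algebra $\bfk[x]$, so Theorem~\ref{rbaonegenx} is just Theorem~\ref{freerbamsp} specialized to $A=\bfk[x]$ and $\lambda=\mathbf{1}_\bfk$. You have spelled out in more detail the composition of universal properties that underlies this identification, but the argument is the same.
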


Since a $\bfk$-linear basis of $\bfk[x]^{\otimes k}$ is the set
\begin{align*}
\{x^{\alpha_1} \otimes\cdots\otimes x^{\alpha_k}|\alpha_i\geq0, 1\leq i\leq k\},
\end{align*}
by denoting $x^{\otimes \alpha}:=x^{\alpha_1}\ot \cdots \ot x^{\alpha_k}$ for $\alpha=(\alpha_1,\cdots,\alpha_k)\in \WC(\NN)$,
we have
\begin{align*}
\sha(x):=\bigoplus_{\alpha\in \NN^k, k\geq 1}\bfk x^{\otimes \alpha}.
\end{align*}
Moreover, we also have
\begin{align*}
\sha^{+}(x):=\sha_{\bfk}^{+}(x)=\bigoplus_{\alpha\in \WC(\NN)}\bfk x^{\otimes \alpha},
\end{align*}
where $x^{\otimes\emptyset}=\mathbf{1}_{\bfk}$ is the identity of $\bfk$.

Through the bijection $\theta:\calc(\tilde{\NN}) \to \WC(\NN)$ defined in Eq.~\eqref{thetabijcntowc}, we obtain a natural linear bijection
\begin{align}\label{eq:rhoshatowc}
\rho: \bfk\mathcal{C}(\tilde{\NN})\to \sha^{+}(x), \quad \alpha\mapsto x^{\otimes\theta(\alpha)},
\end{align}
which indeed is an algebra isomorphism from the quasi-shuffle algebra $\bfk\mathcal{C}(\tilde{\NN})$ to
the mixable shuffle algebra $\sha^{+}(x)$.

We are now in a position to realize the free {\em unitary} Rota-Baxter algebra $\sha(x)$ as a subalgebra of a semigroup exponent formal power series algebra.
The authors of \cite{Ygz2016} realized the free commutative nonunitary Rota-Baxter of weight $1$ on one generator in terms of left weak composition quasi-symmetric
functions. This will be generalized for the  unitary case in what follows.

For $X:=\{x_n\,|\, n\geq 1\}$ with its natural ordering, denote $\bar{X}:=\{x_0\}\cup X =\{x_n\,|\, n\geq 0\}$ with the linear order such that
$x_0<x_1$. Identifying $\bfk[[X]]_{\tilde{\NN}}$ as a subalgebra of $\bfk[[\bar{X}]]_{\tilde{\NN}}$, we form the subalgebra
${\bar\Wcqsym}(\bar{X})$ (or ${\bar\Wcqsym}$ for short) of $\bfk[[\bar{X}]]_{\tilde{\NN}}$ by
\begin{align}\label{x0k[x0]otimes}
{\bar\Wcqsym}:=x_0^\varepsilon\mathbf{k}[x_0]\, \Wcqsym \cong x_0^\varepsilon\mathbf{k}[x_0]\ot \Wcqsym,
\end{align}
where $\Wcqsym$ is the algebra of WC quasi-symmetric functions with $\{M_{\alpha}\,|\,\alpha\in \mathcal{C}(\tilde{\NN})\}$ as a linear basis. Therefore, we have
\begin{align}\label{barqsymkbasis}
{\bar\Wcqsym}=\bigoplus_{(\alpha_0,\cdots,\alpha_k)\in \mathcal{C}(\tilde{\NN})\backslash\{\emptyset\}} \bfk {\bar M}_{(\alpha_0,\cdots,\alpha_k)},
\end{align}
where ${\bar M}_{(\alpha_0,\mathbf{\alpha})}:=x_0^{\alpha_0} M_{\mathbf{\alpha}}$.
For example,
$$
{\bar M}_{(\varepsilon,2,\varepsilon)}=x_0^{\varepsilon}\sum_{i_1<i_2}x_{i_1}^2x_{i_2}^{\varepsilon} \qquad {\rm and} \qquad
{\bar M}_{(2,\varepsilon,3)}=x_0^{2}\sum_{i_1<i_2}x_{i_1}^{\varepsilon}x_{i_2}^3.
$$
Note the special role played by the first entry $\alpha_0$.

Following the multiplication of WC quasi-symmetric functions in Proposition~\mref{prop:kcstoqsyms},  the product on ${\bar\Wcqsym}$ is given by
\begin{align}\label{productshapseetym}
{\bar M}_{(\alpha_0,\mathbf{\alpha})}{\bar M}_{(\beta_0,\mathbf{\beta})}={\bar M}_{(\alpha_0+\beta_0,\mathbf{\alpha}*\mathbf{\beta})},\qquad
\alpha_0,\beta_0\in \tilde{\NN}\backslash\{0\},\ \mathbf{\alpha}, \mathbf{\beta}\in \mathcal{C}(\tilde{\NN}),
\end{align}
with the convention $\emptyset*\alpha=\alpha*\emptyset=\alpha$ and $(\alpha,\emptyset)=\alpha$ for all $\alpha\in \mathcal{C}(\tilde{\NN})$.
Define a linear endomorphism $\overline{P}$ on ${\bar\Wcqsym}$ by  assigning
\begin{align*}
\overline{P}({\bar M}_{(\alpha_0,\mathbf{\alpha})})={\bar M}_{(\varepsilon,\alpha_0,\mathbf{\alpha})}
\end{align*}
and extend linearly.

\begin{theorem}\label{freerbau1}
$({\bar\Wcqsym} ,\overline{P})$ is the free commutative  unitary \rbto algebra of weight $1$ generated by $x_0$.
\end{theorem}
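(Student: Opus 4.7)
The plan is to exhibit a Rota--Baxter algebra isomorphism $\Phi\colon\bar{\Wcqsym}\to\sha(x)$ sending $x_0$ to $x$, and then invoke Theorem~\mref{rbaonegenx} to conclude. Using the bijection $\theta\colon\mathcal{C}(\tilde{\NN})\to\WC(\NN)$ from Eq.~(\mref{eq:theta2}) and the basis (\mref{barqsymkbasis}) of $\bar{\Wcqsym}$, I would define on basis elements
$$\Phi\bigl(\bar M_{(\alpha_0,\alpha_1,\dots,\alpha_k)}\bigr):=x^{\theta(\alpha_0)}\ot x^{\theta(\alpha_1)}\ot\cdots\ot x^{\theta(\alpha_k)},$$
and extend $\bfk$-linearly. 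Since $\theta$ restricts to a bijection between nonempty $\tilde{\NN}$-compositions and nonempty weak compositions, and the latter indexes a $\bfk$-basis of $\sha(x)=\bigoplus_{k\geq 1}\bfk[x]^{\ot k}$ by Theorem~\mref{rbaonegenx}, the map $\Phi$ is manifestly a linear bijection with $\Phi(x_0)=x$.

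Next I would verify that $\Phi$ is an algebra homomorphism. By Eq.~(\mref{productshapseetym}), the product in $\bar{\Wcqsym}$ reads
$$\bar M_{(\alpha_0,\alpha)}\,\bar M_{(\beta_0,\beta)}=\bar M_{(\alpha_0+\beta_0,\,\alpha*\beta)},$$
while the augmented mixable shuffle product (\mref{defdiamond}) gives
$$\bigl(x^{\theta(\alpha_0)}\ot\rho(\alpha)\bigr)\diamond\bigl(x^{\theta(\beta_0)}\ot\rho(\beta)\bigr)=x^{\theta(\alpha_0)+\theta(\beta_0)}\ot\bigl(\rho(\alpha)*\rho(\beta)\bigr),$$
with the natural boundary behaviour when $\alpha$ or $\beta$ is empty (matching the convention $\emptyset*\gamma=\gamma$). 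Compatibility then reduces to two facts: $\theta$ is an additive morphism on $\tilde{\NN}\backslash\{0\}$, so $\theta(\alpha_0+\beta_0)=\theta(\alpha_0)+\theta(\beta_0)$; and $\rho\colon(\bfk\mathcal{C}(\tilde{\NN}),*)\to(\sha^+(x),*)$ of Eq.~(\mref{eq:rhoshatowc}) is an algebra isomorphism, so $\rho(\alpha*\beta)=\rho(\alpha)*\rho(\beta)$. Intertwining of the operators is immediate: since $\theta(\vep)=0$ and $x^0=1$,
$$\Phi\bigl(\overline{P}(\bar M_{(\alpha_0,\alpha)})\bigr)=\Phi\bigl(\bar M_{(\vep,\alpha_0,\alpha)}\bigr)=1\ot x^{\theta(\alpha_0)}\ot\rho(\alpha)=P_{\bfk[x]}\bigl(\Phi(\bar M_{(\alpha_0,\alpha)})\bigr).$$

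Therefore $\Phi$ is an isomorphism of Rota--Baxter algebras sending the generator $x_0$ to the generator $x$, and the universal property of $(\sha(x),P_{\bfk[x]})$ transports along $\Phi$, showing that $(\bar{\Wcqsym},\overline{P})$ is the free commutative unitary Rota--Baxter algebra of weight $1$ on $x_0$. The only real subtlety is to organise the role of the distinguished ``zeroth slot'' $x_0^{\alpha_0}$ so that it matches the leading $\bfk[x]$ tensor factor in $\sha(x)=\bfk[x]\ot\sha^+(\bfk[x])$; once $\theta$ is interpreted as matching $\vep$ with $0$ in that slot, all the multiplicative and operator-level identities are direct translations of structures already established in Propositions~\mref{prop:kcstoqsyms} and~\mref{freerbamsp}, so the argument requires bookkeeping rather than new ideas.
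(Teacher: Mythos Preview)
Your proposal is correct and follows essentially the same approach as the paper: you construct the same linear bijection $\Phi=\bar\varphi\colon\bar\Wcqsym\to\sha(x)$ via $\theta$, verify it is an algebra isomorphism using Eqs.~(\mref{productshapseetym}) and~(\mref{defdiamond}), and check that it intertwines $\overline{P}$ with $P_{\bfk[x]}$. You are slightly more explicit than the paper in invoking the additivity of $\theta$ on $\tilde\NN\backslash\{0\}$ and the isomorphism $\rho$ of Eq.~(\mref{eq:rhoshatowc}) to justify the multiplicativity step, but the argument is the same.
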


\begin{proof}
By definition, ${\bar\Wcqsym} $ is a subalgebra of the $\tilde{\NN}$-exponent power series algebra $\bfk[[\bar{X}]]_{\tilde{\NN}}$.
For any $\tilde{\NN}$-composition $(\alpha_0,\alpha)$ with $\alpha_0\in\tilde\NN$,
${\bar M}_\varepsilon \bar{M}_{(\alpha_0,\alpha)}=x_0^\varepsilon x_0^{\alpha_0} M_{\mathbf{\alpha}}=\bar{M}_{(\alpha_0,\alpha)}$
so that $\bar M_{\varepsilon}$ is the identity of ${\bar\Wcqsym}$.

As free $\mathbf{k}$-modules, $\sha(x)$ has the $\mathbf{k}$-linear basis
\begin{align*}
\{x^{\alpha_0}\otimes x^{\alpha_1}\otimes\cdots\otimes x^{\alpha_n}|(\alpha_0,\alpha_1,\cdots,\alpha_n)\in\mathcal{WC}(\mathbb{N})\backslash\{\emptyset\}\},
\end{align*}
while ${\bar\Wcqsym} $ has the $\mathbf{k}$-linear basis
\begin{align*}
\{{\bar{M}}_{(\alpha_0,{\alpha_1},\cdots,{\alpha_n})}|(\alpha_0,{\alpha_1},\cdots,{\alpha_n})\in \mathcal{C}(\tilde{\NN})\backslash\{\emptyset\}\}.
\end{align*}
Therefore, the assignment
$\bar{\varphi}({\bar{M}}_{(\alpha_0,{\alpha_1},\cdots,{\alpha_n})}):=x^{\theta(\alpha_0)}\otimes x^{\theta(\alpha_1)}\otimes\cdots\otimes x^{\theta(\alpha_n)}$,
where $\theta$ is given by Eq.~\eqref{thetabijcntowc},
defines a linear bijection
$\bar{\varphi}:{\bar\Wcqsym} \to \sha(x).$
By Eqs.~\eqref{defdiamond} and \eqref{productshapseetym},
$\bar{\varphi}$ is an algebra isomorphism. Further $\bar{\varphi}\overline{P}= P_{\bfk[x]}\bar{\varphi}$ by definition. Thus $({\bar\Wcqsym},\overline{P})$ is a Rota-Baxter algebra isomorphic to $(\sha(x),P_{\bfk[x]})$. This completes the proof.
\end{proof}

We now try to understand $\sha(x)$ from the perspective of Hopf algebras. In \cite{EG2006}, the authors showed that for a Hoffman set $X=\cup_{n\geq1}X_n$,
the mixable shuffle algebra $\sha^{+}(A)$, as a subalgebra of $\sha(A)$, carries a Hopf algebra structure, where
$A=\bfk\{X\}$ is the free $\bfk$-module on $X$. If we put $X_n=\{x_n\}$ for all $n\geq 1$ and $[x_i,x_j]=x_{i+j}$, then it is straightforward to check that
the map $\phi$ defined by
\begin{align*}
\phi(x_{i_1}\otimes x_{i_2}\otimes \cdots\otimes  x_{i_k})=\sum_{1\leq n_1<n_2<\cdots<n_k}x_{n_1}^{i_1}x_{n_2}^{i_2}\cdots x_{n_k}^{i_k}
\end{align*}
is an isomorphism of $\sha^{+}(A)$ onto the algebra $\qsym$ of quasi-symmetric functions over $\bfk$.
We now generalize this relationship to establish the connection between $\qsym$ and the free commutative Rota-Baxter algebra $\sha(x)$ of weight $\mathbf{1}_{\bfk}$
on the generator element $x$.

\begin{theorem}\label{thm:qsymsubalgquotRBA}
The algebra $\Wcqsym$ of weak composition quasi-symmetric functions is isomorphic to the algebra $\sha^{+}(x)$ and thus to the subalgebra
$\mathbf{1}_{\bfk}\otimes\sha^{+}(x)$ of the free commutative Rota-Baxter algebra of weight $\mathbf{1}_{\bfk}$ generated by $x$.
Moreover, $\qsym$ is a subalgebra of $\sha_{\bfk}(x)$ and a quotient Hopf algebra of $\mathbf{1}_{\bfk}\otimes\sha^{+}(x)$.
\end{theorem}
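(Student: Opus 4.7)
The plan is to derive the theorem by assembling three already-established ingredients: Proposition~\ref{prop:kcstoqsyms} (specialized to $B=\tilde{\NN}$) which identifies $\Wcqsym$ with the quasi-shuffle algebra $(\bfk\mathcal{C}(\tilde{\NN}),*)$; the algebra isomorphism $\rho$ from Eq.~\eqref{eq:rhoshatowc} between this quasi-shuffle algebra and the mixable shuffle algebra $\sha^{+}(x)$; and Theorem~\ref{thmhopfalghomantipode}, which supplies the surjective Hopf homomorphism $\varphi\colon\Wcqsym\to\qsym$.

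First I would build the algebra isomorphism. Composing the inverse of the bijection $\alpha\mapsto M_\alpha$ from Proposition~\ref{prop:kcstoqsyms} with $\rho$ produces an algebra isomorphism $\Psi\colon\Wcqsym\to\sha^{+}(x)$ determined on the monomial basis by $\Psi(M_\alpha)=x^{\otimes\theta(\alpha)}$. The embedding $\eta$ described right after Eq.~\eqref{defdiamond} identifies $\sha^{+}(x)$ with the subalgebra $\mathbf{1}_{\bfk}\otimes\sha^{+}(x)$ of the free commutative Rota--Baxter algebra $\sha(x)$, yielding the first sentence of the theorem.

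For the subalgebra claim I would use that $\mathcal{C}(\NN)\subseteq\mathcal{C}(\tilde{\NN})$, so $\qsym=\bigoplus_{\alpha\in\mathcal{C}(\NN)}\bfk M_\alpha$ sits inside $\Wcqsym$ as a $\bfk$-submodule. Since $\theta$ is the identity on $\mathcal{C}(\NN)$, its image under $\Psi$ is the span of $\{x^{\otimes\alpha}\mid\alpha\in\mathcal{C}(\NN)\}$, i.e., the pure tensors whose entries are strictly positive powers of $x$. The recursion~\eqref{mixableshuprod} shows that the mixable shuffle of two such tensors only produces merged entries of the form $x^{i+j}$ with $i,j\geq 1$, hence stays in this span; so $\qsym$ embeds as a subalgebra of $\sha^{+}(x)\subset\sha(x)$.

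The quotient Hopf algebra claim then follows from Theorem~\ref{thmhopfalghomantipode}: transporting the Hopf structure on $\Wcqsym$ across $\Psi$ equips $\mathbf{1}_{\bfk}\otimes\sha^{+}(x)$ with a Hopf algebra structure, and $\varphi\circ\Psi^{-1}$ becomes a surjective Hopf algebra homomorphism onto $\qsym$, exhibiting the latter as a Hopf quotient, with kernel generated under $\Psi$ by the image of the set described in Corollary~\ref{cor:Hopfidealvarphietoq}. The only delicate point, more bookkeeping than mathematics, is checking that this transported Hopf structure agrees with the one already placed on $\sha^{+}(x)$ in~\cite{EG2006} via the Hoffman-set construction, so that the theorem records one coherent Hopf-algebraic identification rather than two competing ones; this amounts to comparing both coproducts on the generating tensors $x^{\otimes\alpha}$ with $\alpha\in\mathcal{C}(\NN)$ and invoking the uniqueness of the quasi-shuffle Hopf structure.
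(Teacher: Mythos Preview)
Your proposal is correct and follows essentially the same route as the paper: the paper also composes the bijection $M_\alpha\leftrightarrow\alpha$ from Proposition~\ref{prop:kcstoqsyms} with $\rho$ of Eq.~\eqref{eq:rhoshatowc} (and then $\eta$) to obtain the isomorphism $\psi\colon M_\alpha\mapsto 1\otimes x^{\otimes\theta(\alpha)}$, then invokes Theorem~\ref{thmhopfalghomantipode} for the Hopf quotient statement. Your version is a bit more explicit---you spell out the closure of the positive-exponent tensors under the mixable shuffle and raise the compatibility question with the Hopf structure of~\cite{EG2006}---whereas the paper simply records that $\qsym$ is a subalgebra of $\Wcqsym$ and hence of $\sha_{\bfk}(x)$, but these are elaborations rather than a different argument.
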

\begin{proof}
By Eqs. \eqref{eq:rhoshatowc} and \eqref{thetabijcntowc},  we have the isomorphisms
\begin{align*}
\begin{array}{ccccc}
\psi:\Wcqsym &\rightarrow  &\sha^{+}(x)&\rightarrow &\mathbf{1}_{\bfk}\otimes\sha^{+}(x)\subseteq \sha(x),\cr
M_\alpha     &\mapsto & x^{\otimes \theta(\alpha)}    &\mapsto &1\otimes x^{\otimes \theta(\alpha)}
\end{array}
\end{align*}
of $\bfk$-algebras, where the map $\theta$ is given by Eq.~\eqref{thetabijcntowc}.
Then $\qsym$ is a subalgebra of $\Wcqsym$ so that it is a subalgebra of $\sha_{\bfk}(x)$.
It follows from Theorem \ref{thmhopfalghomantipode} that $\qsym$ is
a quotient Hopf algebra of $\mathbf{1}_{\bfk}\otimes\sha^{+}(x)$. This completes the proof.
\end{proof}

Recall that the polynomial algebra $\mathbf{k}[x]$ is a Hopf algebra
with $x$ primitive. The coproduct $\Delta_{\mathbf{k}[x]}$ is defined by
\begin{align}\label{eq:Deltakxxm}
\Delta_{\mathbf{k}[x]}(x^m)=\sum_{p=0}^m\binom{m}{p}x^p\otimes x^{m-p},
\end{align}
the counit is given by $\epsilon_{\bfk[x]}(x^m)=\delta_{0,m}$ and the antipode $S_{\mathbf{k}[x]}$ is determined by
\begin{align}\label{eq:smk[x]}
S_{\mathbf{k}[x]}(x^m)=(-x)^m,\qquad m\geq0.
\end{align}

Recall~\cite{Abe} that for two $\bfk$-Hopf algebras $A$ and $B$, the tensor product $A\otimes B$ is also a Hopf algebra with
multiplication defined by $(a\otimes b)(c\otimes d)=ac\otimes bd$ and
comultiplication given by
\begin{align}\label{gener7alabdcop}
\Delta_{A\otimes B}:A\otimes B\rightarrow(A\otimes B)\otimes(A\otimes B),
\quad \Delta_{A\otimes B}(a\otimes b)=\sum_{(a),(b)}(a_{(1)}\otimes b_{(1)})\otimes(a_{(2)}\otimes b_{(2)}),
\end{align}
with the Sweedler notation $\Delta(c)=\sum_{(c)}c_{(1)}\otimes c_{(2)}$.
The counit is given by
\begin{align}\label{gener7counit}
\epsilon_{A\otimes B}:A\otimes B\rightarrow \bfk\otimes \bfk:=\bfk,\quad \epsilon_{A\otimes B}(a\otimes b)=\epsilon_{A}(a)\epsilon_{B}(b),
\end{align}
and the antipode is defined by
\begin{align}\label{anti96shax}
S_{A\otimes B}:A\otimes B\rightarrow A\otimes B,\quad S_{A\otimes B}(a\otimes b)=S_{A}(a)\otimes S_{B}(b).
\end{align}

Since $\sha(x)$ is the tensor product of the algebras $\bfk[x]$ and $\sha^{+}(x)$,
$\sha(x)$ is a Hopf algebra. More precisely, we have the following statement which specializes to~\cite{AGKO} and extends the Hopf algebras in~\cite{EG2006}.

\begin{coro}\label{shaklbbialg}
The free commutative unitary \rbto algebra
$(\sha(x),\diamond,\mu,\Delta,\epsilon,S)$ of weight $1$ generated by $x$  is a Hopf algebra, where
\begin{enumerate}
\item\label{coro:pdiamo}
$\diamond$ is the augmented mixable shuffle product;

\item\label{coro:mup} $\mu:\bfk\rightarrow \sha(x), \quad k\mapsto k$;

\item\label{coro:codeltaph} $\Delta:\ \sha(x) \rightarrow\sha(x)\otimes\sha(x)$, for a sequence $(a,{\alpha_1},\cdots,{\alpha_k})$ of nonnegative integers,
\begin{align*}
\Delta\left(x^{\otimes(a,{\alpha_1},\cdots,{\alpha_k})}\right)
= \sum_{i=0}^k\sum_{p=0}^a
\binom{a}{p}\left(x^{\otimes(p,{\alpha_1},\cdots,
{\alpha_i})}\right)
\otimes
\left(x^{\otimes({a-p},{\alpha_{i+1}},\cdots,
{\alpha_k})}\right),
\end{align*}
with the convention that $x^{\alpha_1}\otimes \cdots \otimes x^{\alpha_i}=1$ if $i=0$ and $x^{\alpha_{i+1}}\otimes \cdots \otimes x^{\alpha_k}=1$
if $i=k$;

\item\label{coro:epsilon} $\epsilon: \sha(x)\rightarrow\bfk$ is a $\bfk$-linear map, where if $w= x^{\otimes(a,{\alpha_1},\cdots,{\alpha_k})}$, then
$\epsilon(w)=\delta_{1,w}$;

\item\label{coro:pantipodehp} The antipode $S$ is given by
\begin{align*}
S\left(x^{\otimes(a,{\alpha_{1}}, \cdots, {\alpha_k})}\right)
=(-1)^{a+k}x^a\otimes\sum _{(i_1,i_2,\cdots,i_r)\models k}
x^{\otimes(\alpha_k+\cdots+\alpha_{k-i_1+1},\cdots,\alpha_{i_r}+\cdots+\alpha_{1})}.
\end{align*}
\end{enumerate}
\end{coro}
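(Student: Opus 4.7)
The plan is to transport the tensor-product Hopf algebra structure on $\bfk[x] \otimes \Wcqsym$ along an algebra isomorphism $\Psi: \bfk[x] \otimes \Wcqsym \xrightarrow{\sim} \sha(x)$, and then read off the explicit formulas on basis elements. Parts \eqref{coro:pdiamo} and \eqref{coro:mup} have already been established in Theorem~\ref{rbaonegenx} (in particular the augmented mixable shuffle $\diamond$ corresponds under $\Psi$ to componentwise multiplication via Eq.~\eqref{productshapseetym} on the $\Wcqsym$-factor and ordinary polynomial multiplication on the $\bfk[x]$-factor), so only \eqref{coro:codeltaph}--\eqref{coro:pantipodehp} require work.

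The first step is to combine the algebra isomorphism $\Wcqsym \cong \sha^+(x)$ from Theorem~\ref{thm:qsymsubalgquotRBA} with the decomposition $\sha(x) = \bfk[x] \otimes \sha^+(x)$ of Theorem~\ref{rbaonegenx} to obtain
\[
\Psi: \bfk[x] \otimes \Wcqsym \xrightarrow{\sim} \sha(x), \qquad x^a \otimes M_\alpha \mapsto x^{\otimes (a, \theta(\alpha))},
\]
where $\theta: \calc(\tilde\NN) \to \WC(\NN)$ is the bijection of Eq.~\eqref{thetabijcntowc}. Because both $\bfk[x]$ (with $x$ primitive, via Eqs.~\eqref{eq:Deltakxxm} and~\eqref{eq:smk[x]}) and $\Wcqsym$ (by Proposition~\ref{antieqsym}) are Hopf algebras, their tensor product inherits a canonical Hopf algebra structure from Eqs.~\eqref{gener7alabdcop}--\eqref{anti96shax}, which transports through $\Psi$ to a Hopf algebra structure on $\sha(x)$.

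The second step is to verify the explicit formulas. For the coproduct in \eqref{coro:codeltaph}, applying Eq.~\eqref{eq:Deltakxxm} to $x^a$ and Eq.~\eqref{mqsydeltcop1} to $M_{\theta^{-1}(\alpha_1,\ldots,\alpha_k)}$ and then pushing through $\Psi \otimes \Psi$ reproduces the stated double sum, since splitting the $\Wcqsym$-factor at the $i$-th position corresponds exactly to splitting the sequence $(\alpha_1,\ldots,\alpha_k)$ at position $i$. For the counit in \eqref{coro:epsilon}, Eq.~\eqref{mqsydeltcop2} combined with $\epsilon_{\bfk[x]}(x^a) = \delta_{a,0}$ yields $\epsilon(x^a \otimes M_\beta) = \delta_{a,0}\delta_{\beta,\emptyset}$, which under $\Psi$ amounts to $\epsilon(w) = \delta_{1,w}$. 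For the antipode in \eqref{coro:pantipodehp}, Eqs.~\eqref{anti96shax} and~\eqref{eq:smk[x]} together with Proposition~\ref{antieqsym} give
\[
S(x^a \otimes M_\beta) = (-x)^a \otimes (-1)^{\ell(\beta)} \sum_{J \models \ell(\beta)} M_{J \circ \beta^r},
\]
and unpacking $J = (i_1,\ldots,i_r)$, $\beta = \theta^{-1}(\alpha_1,\ldots,\alpha_k)$ together with an application of $\Psi$ produces the stated formula, with the sign $(-1)^{a+k} = (-1)^a \cdot (-1)^k$ coming respectively from $(-x)^a$ and from $S_W$.

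The main obstacle is bookkeeping rather than conceptual: one must keep the $\theta$-correspondence consistent inside the antipode formula, where the operation $J \circ (\cdot)^r$ collapses several adjacent entries of a $\tilde\NN$-composition by addition. This is handled cleanly by noting that $\theta$ restricts to a monoid isomorphism $\tilde\NN \setminus \{0\} \to \NN$: the identities $\varepsilon + n = n$ for $n \geq 1$ and $\varepsilon + \varepsilon = \varepsilon$ in $\tilde\NN$ correspond precisely to $0 + n = n$ and $0 + 0 = 0$ in $\NN$, so each summed block in $J \circ \beta^r$ pushes forward under $\theta$ to the integer sum $\alpha_{j_1} + \cdots + \alpha_{j_s}$ that appears in the formula of \eqref{coro:pantipodehp}.
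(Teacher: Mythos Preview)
Your proposal is correct and follows essentially the same approach as the paper: both use the algebra isomorphism $\Wcqsym\cong\sha^+(x)$ from Theorem~\ref{thm:qsymsubalgquotRBA}, tensor with the Hopf algebra $\bfk[x]$, and then read off the coproduct, counit, and antipode on basis elements from Eqs.~\eqref{mqsydeltcop1}, \eqref{mqsydeltcop2}, \eqref{antipodeqesym0}, \eqref{eq:Deltakxxm}--\eqref{anti96shax}. Your treatment is in fact more explicit than the paper's, particularly in spelling out why the $\theta$-correspondence is compatible with the block sums appearing in the antipode formula.
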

\begin{proof}
By Theorem \ref{thm:qsymsubalgquotRBA}, $\sha^{+}(x)$ is isomorphic to $\Wcqsym$ so that it is a Hopf algebra.
Notice that $\sha_{\bfk}(x)$ is the tensor product algebra $\bfk[x]\otimes \sha^{+}(x)$, where we write $f(x)\otimes \mathbf{1}_{\bfk}=f(x)$ for $f(x)\in \bfk[x]$,
so $\sha_{\bfk}(x)$ is a Hopf algebra.

By Theorem \ref{freerbamsp}, Parts \eqref{coro:pdiamo} and \eqref{coro:mup} hold. Part \eqref{coro:codeltaph} follows at once from Eqs.~\eqref{mqsydeltcop1}, \eqref{eq:Deltakxxm} and \eqref{gener7alabdcop},
Part \eqref{coro:epsilon} can be read from Eqs.~\eqref{mqsydeltcop2}, \eqref{gener7counit} and the formula $\epsilon_{\bfk[x]}(x^m)=\delta_{0,m}$,
and Part \eqref{coro:pantipodehp} follows immediately from Eqs.~ \eqref{antipodeqesym0}, \eqref{eq:smk[x]} and \eqref{anti96shax}. This completes the proof.
\end{proof}

\noindent
{\bf Acknowledgements.}
This work was partially supported by the National Natural Science Foundation (Grant No. 11371178 and 11501467) and Chongqing Research Program of Application
Foundation and Advanced Technology (No. cstc2014jcyjA00028). The authors thank William Y. Sit for stimulative discussions.
H.\ Yu thanks the hospitality and stimulating environment provided by New York University.

\end{document}